\newcommand{\thickhline}{%
    \noalign {\ifnum 0=`}\fi \hrule height 1pt
    \futurelet \reserved@a \@xhline
}
\newcolumntype{"}{@{\hskip\tabcolsep\vrule width 1pt\hskip\tabcolsep}}
\newcommand*\patchAmsMathEnvironmentForLineno[1]{%
\expandafter\let\csname old#1\expandafter\endcsname\csname #1\endcsname  \expandafter\let\csname oldend#1\expandafter\endcsname\csname end#1\endcsname  \renewenvironment{#1}%
{\linenomath\csname old#1\endcsname}%
{\csname oldend#1\endcsname\endlinenomath}}%
\newcommand*\patchBothAmsMathEnvironmentsForLineno[1]{%
\patchAmsMathEnvironmentForLineno{#1}%
\patchAmsMathEnvironmentForLineno{#1*}}%
\newtheorem{theorem}{Theorem}[section]
\newtheorem{lemma}[theorem]{Lemma}
\newtheorem{example}{Example}[section]
\newtheorem{corollary}[theorem]{Corollary}
\newtheorem{problem}{Problem}[section]
\newtheorem{conjecture}{Conjecture}[section]
\numberwithin{equation}{section}
\def\Z{\mathbb{Z}}
\newtheorem{defi}{Definition}[section]
\begin{document}
\baselineskip18truept
\normalsize
\begin{center}
{\mathversion{bold}\Large \bf On local antimagic chromatic number of cycle-related join graphs}

\bigskip
{\large  Gee-Choon Lau{$^{a,}$}\footnote{Corresponding author.}, Wai-Chee Shiu{$^b$}, Ho-Kuen Ng{$^c$}}\\

\medskip

\emph{{$^a$}Faculty of Computer \& Mathematical Sciences,}\\
\emph{Universiti Teknologi MARA (Segamat Campus),}\\
\emph{85000, Johor, Malaysia.}\\
\emph{geeclau@yahoo.com}\\

\medskip

\emph{{$^b$}Department of Mathematics, Hong Kong Baptist University,}\\
\emph{224 Waterloo Road, Kowloon Tong, Hong Kong, P.R. China.}\\
\emph{wcshiu@hkbu.edu.hk}\\

\medskip

\emph{{$^c$}Department of Mathematics, San Jos\'{e} State University,}\\
\emph{San Jos\'e CA 95192 USA.}\\
\emph{ho-kuen.ng@sjsu.edu}\\
\end{center}


\medskip
\begin{abstract}
An edge labeling of a connected graph $G = (V, E)$ is said to be local antimagic if it is a bijection $f:E \to\{1,\ldots ,|E|\}$ such that for any pair of adjacent vertices $x$ and $y$, $f^+(x)\not= f^+(y)$, where the induced vertex label $f^+(x)= \sum f(e)$, with $e$ ranging over all the edges incident to $x$.  The local antimagic chromatic number of $G$, denoted by $\chi_{la}(G)$, is the minimum number of distinct induced vertex labels over all local antimagic labelings of $G$. In this paper, several sufficient conditions for $\chi_{la}(H)\le \chi_{la}(G)$ are obtained, where $H$ is obtained from $G$ with a certain edge deleted or added. We then determined the exact value of the local antimagic chromatic number of many cycle related join graphs.

\medskip
\noindent Keywords: Local antimagic labeling, local antimagic chromatic number, cycle, join graphs.
\medskip

\noindent 2010 AMS Subject Classifications: 05C78, 05C69.
\end{abstract}

\tolerance=10000
\baselineskip12truept
\def\qed{\hspace*{\fill}$\Box$\medskip}

\def\s{\,\,\,}
\def\ss{\smallskip}
\def\ms{\medskip}
\def\bs{\bigskip}
\def\c{\centerline}
\def\nt{\noindent}
\def\ul{\underline}
\def\lc{\lceil}
\def\rc{\rceil}
\def\lf{\lfloor}
\def\rf{\rfloor}
\def\a{\alpha}
\def\b{\beta}
\def\n{\nu}
\def\o{\omega}
\def\ov{\over}
\def\m{\mu}
\def\t{\tau}
\def\th{\theta}
\def\k{\kappa}
\def\l{\lambda}
\def\L{\Lambda}
\def\g{\gamma}
\def\d{\delta}
\def\D{\Delta}
\def\e{\epsilon}
\def\lg{\langle}
\def\rg{\tongle}
\def\p{\prime}
\def\sg{\sigma}
\def\to{\rightarrow}

\newcommand{\K}{K\lower0.2cm\hbox{4}\ }
\newcommand{\cl}{\centerline}
\newcommand{\om}{\omega}
\newcommand{\ben}{\begin{enumerate}}

\newcommand{\een}{\end{enumerate}}
\newcommand{\bit}{\begin{itemize}}
\newcommand{\eit}{\end{itemize}}
\newcommand{\bea}{\begin{eqnarray*}}
\newcommand{\eea}{\end{eqnarray*}}
\newcommand{\bear}{\begin{eqnarray}}
\newcommand{\eear}{\end{eqnarray}}

\section{Introduction}

\nt A connected graph $G = (V, E)$ is said to be {\it local antimagic} if it admits a {\it local antimagic edge labeling}, i.e., a bijection $f : E \to \{1,\dots ,|E|\}$ such that the induced vertex labeling $f^+ : V \to \Z$ given by $f^+(u) = \sum f(e)$ (with $e$ ranging over all the edges incident to $u$) has the property that any two adjacent vertices have distinct induced vertex labels (see~\cite{Arumugam, Bensmail}). Thus, $f^+$ is a coloring of $G$. Clearly, the order of $G$ must be at least 3.  The vertex label $f^+(u)$ is called the {\it induced color} of $u$ under $f$ (the {\it color} of $u$, for short, if no ambiguity occurs). The number of distinct induced colors under $f$ is denoted by $c(f)$, and is called the {\it color number} of $f$. The {\it local antimagic chromatic number} of $G$, denoted by $\chi_{la}(G)$, is $\min\{c(f) : f\mbox{ is a local antimagic labeling of } G\}$.

\ms\nt Let $O_n = \overline{K_n}$ be the empty graph of order $n\ge 1$. For any graph $G$, the join graph $H=G \vee O_n$ is defined by $V(H)=V(G)\cup\{v_j : 1\le j\le n\}$ and $E(H)=E(G)\cup\{uv_j : u \in V(G), 1\le j\le n\}$.  In~\cite[Theorem 2.16]{Arumugam}, it was claimed that for any $G$ with order $n\ge4$, $$\chi_{la}(G) + 1 \le \chi_{la}(G\vee O_2) \le \begin{cases}
\chi_{la}(G)+1 & \mbox{if } n \mbox{ is even,} \\
\chi_{la}(G) + 2 & \mbox{if } n \mbox{ is odd}.
\end{cases}$$
In~\cite{LNS}, Lau {\it et al.} showed that there exists a graph $G$ of order $n$ such that (i) $\chi_{la}(G) - \chi_{la}(G\vee O_2) = n-3$ for each even $n\ge 4$, and (ii) $\chi_{la}(G) = \chi_{la}(G\vee O_2)$ for each odd $n\ge 3$. This implies that the above lower bound is invalid. They then showed that $\chi_{la}(G + O_n)\ge \chi(G) + 1$ and the bound is sharp. Several sufficient conditions for the following conjecture to hold were also given.

\begin{conjecture}\label{conj-joint} For $n\ge 1$, $\chi_{la}(G\vee O_n)\ge\chi_{la}(G)+1$ if and only if $\chi(G)=\chi_{la}(G)$.  \end{conjecture}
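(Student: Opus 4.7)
The plan is to split the biconditional into its two directions, which differ considerably in difficulty.

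For the easier $(\Leftarrow)$ direction, suppose $\chi(G)=\chi_{la}(G)$. I would first observe that in $H=G\vee O_n$ every vertex of $O_n$ is adjacent to all of $V(G)$ while the $n$ vertices of $O_n$ form an independent set, so any proper coloring of $H$ restricts to a proper coloring of $G$ and requires at least one additional color for $O_n$; hence $\chi(H)=\chi(G)+1$. Combining this with the universal bound $\chi_{la}(H)\ge\chi(H)$, which holds because $f^+$ is by definition a proper coloring, yields $\chi_{la}(G\vee O_n)\ge\chi(G)+1=\chi_{la}(G)+1$.

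For the harder $(\Rightarrow)$ direction I would argue by contrapositive: assuming $\chi(G)<\chi_{la}(G)$, construct a local antimagic labeling of $H=G\vee O_n$ using at most $\chi_{la}(G)$ distinct induced colors. The strategy is to fix a proper $\chi(G)$-coloring $c$ of $G$ and engineer the labeling of $H$ so that vertices lying in the same $c$-class receive the same induced color, while all $n$ vertices of $O_n$ receive induced colors outside the set of $\chi(G)$ class-colors on $V(G)$. Writing $m=|E(G)|$ and $N=|V(G)|$, one naturally places the small block $\{1,\dots,m\}$ on $E(G)$ and the large block $\{m+1,\dots,m+nN\}$ on the $nN$ new edges; the slack $\chi_{la}(G)-\chi(G)\ge 1$ then supplies the extra class for $O_n$, giving at most $\chi(G)+1\le\chi_{la}(G)$ colors in total.

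The principal obstacle lies in simultaneously (i) making the sum of labels at each $v\in V(G)$ depend only on $c(v)$, and (ii) pushing each $O_n$-vertex sum away from these prescribed values. The sum at $v\in V(G)$ decomposes as $f_G^+(v)+\sum_{w\in O_n}f(vw)$; the first piece is constrained by the labels $\{1,\dots,m\}$ on $G$ and in general is not constant across a $c$-class, while the second is a weighted sum of large labels. A natural avenue is a transposition/switching argument within the block of large labels to absorb the variation of $f_G^+$ inside each color class. A sensible incremental program is to first settle the conjecture for $G$ regular with equal-sized color classes, or for $n$ large enough that the new-edge block dominates the old, and then to reduce the general case to such balanced configurations. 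This delicate balancing is the main difficulty and is presumably why the statement remains open despite the sufficient conditions already established in~\cite{LNS}.
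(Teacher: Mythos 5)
This statement is posed in the paper as an open conjecture; the paper contains no proof of it, only supporting evidence (e.g.\ Theorems~\ref{thm-CmOnodd} and~\ref{thm-CmOneven}) and sufficient conditions established elsewhere in~\cite{LNS}. Your $(\Leftarrow)$ direction is correct but is not new content: since each vertex of $O_n$ is adjacent to all of $V(G)$ and $O_n$ is independent, $\chi(G\vee O_n)=\chi(G)+1$, and together with $\chi_{la}\ge\chi$ this gives $\chi_{la}(G\vee O_n)\ge\chi(G)+1=\chi_{la}(G)+1$ whenever $\chi(G)=\chi_{la}(G)$; this is precisely the bound $\chi_{la}(G\vee O_n)\ge\chi(G)+1$ already quoted in the introduction.

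The genuine gap is the $(\Rightarrow)$ direction, which is where the whole content of the conjecture lies, and your proposal does not prove it. You state the correct contrapositive target (if $\chi(G)<\chi_{la}(G)$, produce a local antimagic labeling of $G\vee O_n$ with at most $\chi_{la}(G)$ induced colors), but you only sketch a strategy and explicitly concede that the ``delicate balancing'' is unresolved. The strategy itself is also stronger than necessary and problematic: forcing $f^+$ to be constant on every class of a fixed proper $\chi(G)$-coloring is generally unattainable once you commit the block $\{1,\dots,m\}$ to $E(G)$ and the large block to the join edges, because the $G$-part of the sum at $v\in V(G)$ varies within a class (different degrees, different incident labels) while the join-edge contribution at every $v$ consists of exactly $n$ labels, and the proposed ``transposition/switching'' absorption is only hinted at, not constructed. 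You also must guarantee that the $n$ vertices of $O_n$ avoid all class sums, and that adjacent $G$-vertices in different classes stay distinct after the switching, none of which is verified. In short, only the easy direction is established; the hard direction remains open, which is consistent with the paper presenting the statement as Conjecture~\ref{conj-joint} rather than a theorem.
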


\nt Let $G-e$ (or $G+e$) be the graph $G$ with an edge $e$ deleted (or added). As a natural extension, we have in this paper obtained several sufficient conditions for $\chi_{la}(G-e)\le \chi_{la}(G)$ (or $\chi_{la}(G+e)\le \chi_{la}(G)$). We then determine the exact value of the local antimagic chromatic number of many cycle related join graphs. We shall use the notation $[a,b]=\{c\in\Z : a\le c\le b\}$, for integers $a\le b$. Unless stated otherwise, all graphs considered in this paper are simple, undirected, connected and of order at least 3. Thus $\chi_{la}(G)\ge 2$ for any graph $G$. Interested readers may refer to Yu~\cite{Yu} for local antimagic labeling of subcubic graphs without isolated edges.


\ms\nt For $m,n\ge 2$, it is well known that a magic $(m,n)$-rectangle exists if and only if $m\equiv n\pmod{2}$ and $(m,n)\not=(2,2)$ (see \cite{Chai, Reyes}). Let $a_{i,j}$ be the $(i,j)$-entry of a magic $(m,n)$-rectangle with row constant $n(mn+1)/2$ and column constant $m(mn+1)/2$.

\section{Bounds on graphs with an edge deleted or added}\label{sec:edge}

\nt Observe that $K_t$, $t\ge 3$, is a complete $t$-partite graph with $\chi_{la}(K_t) = t$. The contrapositive of the following lemma gives a sufficient condition for a bipartite graph $G$ to have $\chi_{la}(G)\ge 3$.

\begin{lemma}\label{lem-2part} Let $G$ be a graph of size $q$. Suppose there is a local antimagic labeling of $G$ inducing a $2$-coloring of $G$ with colors $x$ and $y$, where $x<y$. Let $X$ and $Y$ be the numbers of vertices of colors $x$ and $y$, respectively. Then $G$ is a bipartite graph whose sizes of parts are $X$ and $Y$ with $X>Y$, and
\begin{equation}\label{eq-bi} xX=yY= \frac{q(q+1)}{2}.\end{equation}
\end{lemma}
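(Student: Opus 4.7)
The plan is to extract everything from two observations: that a proper 2-coloring forces bipartiteness, and that in a bipartite graph each edge has exactly one endpoint in each part, so summing induced vertex labels over a single part counts every edge label exactly once.

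First I would argue bipartiteness. Since $f$ is a local antimagic labeling with exactly two induced colors $x$ and $y$, the vertex set splits into $V_x=\{u:f^+(u)=x\}$ and $V_y=\{u:f^+(u)=y\}$, of sizes $X$ and $Y$. Any edge $uv\in E(G)$ forces $f^+(u)\ne f^+(v)$ by the definition of a local antimagic labeling, so $u$ and $v$ lie in different classes. Hence every edge goes between $V_x$ and $V_y$, and $G$ is bipartite with parts of the claimed sizes.

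Next I would do the double-counting. Because $G$ is bipartite with parts $V_x$ and $V_y$, each edge of $G$ is incident to exactly one vertex in $V_x$. Therefore
\[
\sum_{u\in V_x} f^+(u)=\sum_{u\in V_x}\sum_{e\ni u} f(e)=\sum_{e\in E(G)} f(e)=1+2+\cdots+q=\frac{q(q+1)}{2}.
\]
The left-hand side equals $xX$ since every vertex in $V_x$ has induced color $x$. The identical argument applied to $V_y$ yields $yY=q(q+1)/2$, giving the displayed equation \eqref{eq-bi}.

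Finally, $X>Y$ is immediate: from $xX=yY$ and $0<x<y$ we get $X/Y=y/x>1$. There is no real obstacle here; the only subtlety to flag is that one must sum over a single part rather than over all of $V(G)$ (the latter would give $xX+yY=q(q+1)$, from which $xX=yY$ does not follow without the bipartite one-sided sum). The lemma therefore reduces to this one-sided handshake identity together with the trivial inequality comparison.
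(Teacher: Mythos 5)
Your proposal is correct and follows essentially the same route as the paper's proof: bipartiteness from the induced proper $2$-coloring, the one-sided edge-sum identity giving $xX=yY=q(q+1)/2$, and $X>Y$ from $x<y$. You simply spell out the double-counting step that the paper leaves implicit.
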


\begin{proof} Clearly $G$ is bipartite. Each edge is incident with one vertex of color $x$ and one vertex of color $y$. Hence we have the equation~\eqref{eq-bi}. Since $x< y$, $X>Y$. This completes the proof.
\end{proof}

\begin{lemma}\label{lem-reg} Suppose $G$ is a $d$-regular graph of size $q$. If $f$ is a local antimagic labeling of $G$, then $g = q + 1 - f$ is also a local antimagic labeling of $G$ with $c(f)= c(g)$.  Moreover, suppose $c(f)=\chi_{la}(G)$ and if $f(uv)=1$ or $f(uv)=q$, then $\chi_{la}(G-uv)\le \chi_{la}(G)$. \end{lemma}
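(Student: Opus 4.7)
\medskip\noindent\textbf{Proof plan.} The statement splits naturally into two parts, and I would prove them in order.

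For the first claim, since $f$ is a bijection from $E$ to $[1,q]$, the map $g(e)=q+1-f(e)$ is also such a bijection. As $G$ is $d$-regular, at every vertex $w$
\[g^+(w)=\sum_{e\ni w}(q+1-f(e))=d(q+1)-f^+(w),\]
so $g^+(u)=g^+(v)$ if and only if $f^+(u)=f^+(v)$ for every pair of vertices. Hence $g$ inherits the local antimagic property from $f$, and the map $x\mapsto d(q+1)-x$ sends the color set of $f$ bijectively onto that of $g$, giving $c(f)=c(g)$.

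For the second claim I would first treat the subcase $f(uv)=1$. Put $H=G-uv$ and define $f':E(H)\to[1,q-1]$ by $f'(e)=f(e)-1$; this is a bijection because the deleted edge carried the label $1$. For any vertex $w\notin\{u,v\}$, the degree of $w$ in $H$ is still $d$, so $(f')^+(w)=f^+(w)-d$. For the vertex $u$, which now has degree $d-1$ in $H$,
\[(f')^+(u)=\Bigl(\sum_{e\ni u,\,e\ne uv}f(e)\Bigr)-(d-1)=(f^+(u)-1)-(d-1)=f^+(u)-d,\]
and the same identity holds for $v$. Thus every induced color is shifted by the same constant $d$: this preserves the total number of distinct colors, and preserves distinctness of colors on any edge of $H$ (all such edges survived from $G$). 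Since $c(f)=\chi_{la}(G)$, I conclude $\chi_{la}(G-uv)\le c(f')=\chi_{la}(G)$.

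The subcase $f(uv)=q$ reduces to the previous one via the first assertion. The labeling $g=q+1-f$ is local antimagic with $c(g)=\chi_{la}(G)$ and $g(uv)=1$, so applying the first subcase to $g$ delivers the bound. The only thing requiring care in the whole argument is verifying that the shifts of the induced colors at $u$ and $v$ (whose degrees drop by one, but which also lose the label $1$) agree with the shifts elsewhere; they do, precisely because $f(uv)+(d-1)=1+(d-1)=d$, matching the constant by which the other colors are shifted. Once this bookkeeping is in hand, everything else is immediate.
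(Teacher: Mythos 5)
Your proposal is correct and follows essentially the same route as the paper: negate the labels via $g=q+1-f$ using regularity to get $g^+=d(q+1)-f^+$, reduce the case $f(uv)=q$ to $f(uv)=1$ via this involution, and then shift all labels down by $1$ so that every induced color drops by the constant $d$. Your explicit check that the colors at $u$ and $v$ also drop by exactly $d$ is the only detail the paper leaves implicit, and it is handled correctly.
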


\begin{proof} Let $x,y\in V(G)$. Here, $g^+(x) = d(e+1) - f^+(x)$ and $g^+(y) = d(q+1) - f^+(y)$. Therefore, $f^+(x)=f^+(y)$ if and only if $g^+(x) = g^+(y)$. Thus, $g$ is also a local antimagic labeling of $G$ with $c(g) = c(f)$.

\ms\nt If $f(uv)=q$, then we may consider $g=q+1-f$. So without loss of generality, we may assume that $f(uv)=1$. Define $h : E(G-uv) \to [1, |E(G)|-1]$ such that $h(e) = f(e) - 1$ for $e\ne uv$. So, $h^+(x) = f^+(x) - d$ for each vertex $x$ of $G-uv$. Therefore, $f^+(x)=f^+(y)$ if and only if $h^+(x) = h^+(y)$. Thus, $h$ is also a local antimagic labeling of $G$ with $c(h) = c(f)$. Consequently, $\chi_{la}(G-uv)\le \chi_{la}(G)$. \end{proof}

\nt Note that if $G$ is a regular edge-transitive graph, then $\chi_{la}(G-e)\le \chi_{la}(G)$. 


\begin{lemma}\label{lem-nonreg} Suppose $G$ is a graph of size $q$ and $f$ is a local antimagic labeling of $G$. For any $x,y\in V(G)$, if\\
(i) $f^+(x) = f^+(y)$ implies that $\deg(x)=\deg(y)$, and\\
(ii) $f^+(x) \ne f^+(y)$ implies that $(q+1)(\deg(x)-\deg(y)) \ne f^+(x) - f^+(y)$,\\ then $g = q + 1 - f$ is also a local antimagic labeling of $G$ with $c(f)= c(g)$. \end{lemma}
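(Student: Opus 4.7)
The plan is to mirror the proof of Lemma~\ref{lem-reg} and reduce everything to a single identity. Summing $g(e)=q+1-f(e)$ over the edges incident to a vertex $v$ gives $g^+(v)=(q+1)\deg(v)-f^+(v)$. Since $f$ is a bijection from $E(G)$ onto $[1,q]$, so is $g$; hence the only thing to check is that $g$ is a proper vertex coloring of $G$ and that its color number equals $c(f)$.

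Next I would verify properness. For any edge $xy$ the identity yields $g^+(x)-g^+(y)=(q+1)(\deg(x)-\deg(y))-(f^+(x)-f^+(y))$. Because $f$ is locally antimagic, $f^+(x)\ne f^+(y)$; hypothesis (ii) then prevents the right-hand side from being zero, so $g^+(x)\ne g^+(y)$, proving $g$ is a local antimagic labeling of~$G$.

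For the equality $c(f)=c(g)$ I would show that the partitions of $V(G)$ induced by the two colorings coincide. If $f^+(x)=f^+(y)$, hypothesis (i) gives $\deg(x)=\deg(y)$, and the identity forces $g^+(x)=g^+(y)$. Conversely, if $f^+(x)\ne f^+(y)$, the contrapositive of (ii) gives $(q+1)(\deg(x)-\deg(y))\ne f^+(x)-f^+(y)$, hence $g^+(x)\ne g^+(y)$. Therefore the color classes of $f$ and $g$ agree, and in particular their cardinalities match.

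There is no real technical obstacle; the lemma is essentially bookkeeping around the identity $g^+(v)=(q+1)\deg(v)-f^+(v)$. The only subtlety worth flagging is that (ii) is imposed on \emph{every} pair with $f^+(x)\ne f^+(y)$, not merely on adjacent pairs: properness of $g$ would already follow from the weaker statement restricted to edges, but the stronger form is what guarantees that two vertices which are separated by $f$ remain separated by $g$, so that no color classes collapse and the equality $c(g)=c(f)$ (rather than an inequality) is obtained.
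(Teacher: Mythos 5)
Your proof is correct and follows essentially the same route as the paper's: both rest on the identity $g^+(v)=(q+1)\deg(v)-f^+(v)$ and then apply condition (i) to pairs with equal $f$-colors and condition (ii) to pairs with distinct $f$-colors, giving both that $g$ is local antimagic and that the color classes of $f$ and $g$ coincide, hence $c(g)=c(f)$. (Only a wording quibble: in the last step you invoke ``the contrapositive of (ii)'' where you are in fact using (ii) directly.)
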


\begin{proof} For any $x,y\in V(G)$, we have $g^+(x) = \deg(x)(q+1) - f^+(x)$ and $g^+(y) = \deg(y)(q+1) - f^+(y)$. Here $g^+(x)-g^+(y)=(q+1)(\deg(x)-\deg(y))-(f^+(x)-f^+(y))$. If $f^+(x)=f^+(y)$, then condition (i) implies that $g^+(x) = g^+(y)$. If $f^+(x) \ne f^+(y)$, then condition (ii) implies that $g^+(x) \ne g^+(y)$. Thus, $g$ is also a local antimagic labeling of $G$ with $c(g) = c(f)$.

\end{proof}  

\nt For $t\ge 2$, consider the following conditions for a graph $G$:

\begin{enumerate}[(i)]
  \item $\chi_{la}(G)=t$ and $f$ is a local antimagic labeling of $G$ that induces a $t$-independent partition $\cup^t_{i=1} V_i$ of $V(G)$.
  \item For each $x\in V_k$, $1\le k\le t$, $deg(x)=d_k$ satisfying $f^+(x) - d_a \ne f^+(y) - d_b$, where $x\in V_a$ and $y\in V_b$ for $1\le a\ne b\le t$. 
  \item There exist two non-adjacent vertices $u,v$ with $u\in V_i, v\in V_j$ for some $1\le i\ne j\le t$ such that 
  \begin{enumerate}[(a)]
    \item $|V_i|=|V_j|=1$ and $\deg(x)=d_k$ for $x\in V_k$, $1\le k\le t$; or
    \item $|V_i|=1$, $|V_j|\ge 2$ and $\deg(x)=d_k$ for $x\in V_k$, $1\le k\le t$ except that $\deg(v) = d_j-1$; or
    \item $|V_i|\ge 2$, $|V_j|\ge 2$ and $deg(x)=d_k$ for $x\in V_k$, $1\le k\le t$ except that $\deg(u) = d_i-1$, $\deg(v) = d_j-1$,
\end{enumerate}
each satisfying $f^+(x) + d_a\ne f^+(y) + d_b$, where $x\in V_a$ and $y\in V_b$ for $1\le a\ne b\le t$.
\end{enumerate}

\begin{lemma}\label{lem-G-e} Let $H$ be obtained from $G$ with an edge $e$ deleted. If $G$ satisfies Conditions (i) and (ii) and $f(e)=1$, then $\chi(H)\le \chi_{la}(H)\le t$. \end{lemma}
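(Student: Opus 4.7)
Since $f(e)=1$ is the smallest label, deleting $e$ from $G$ and shifting every remaining label down by one yields a bijection onto $[1,|E(G)|-1]=[1,|E(H)|]$. So the plan is to define $h\colon E(H)\to[1,|E(H)|]$ by $h(e')=f(e')-1$ for every $e'\in E(H)$, then verify that $h$ is a local antimagic labeling of $H$ using at most $t$ colors. Since a local antimagic labeling induces a proper vertex coloring, $\chi(H)\le\chi_{la}(H)$ is automatic, so the whole task reduces to showing $\chi_{la}(H)\le t$.

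The key computation is to compare $h^+$ with $f^+$ at every vertex. Write $e=uv$ and let $q=|E(G)|$. For a vertex $x$ not incident to $e$, every edge at $x$ has its label reduced by $1$, so $h^+(x)=f^+(x)-\deg_G(x)$. For $x\in\{u,v\}$ one edge (namely $e$, carrying label $1$) is removed while each of the remaining $\deg_G(x)-1$ edges loses $1$, giving $h^+(x)=f^+(x)-1-(\deg_G(x)-1)=f^+(x)-\deg_G(x)$. Hence in every case $h^+(x)=f^+(x)-\deg_G(x)$. Condition (i) together with the degree assumption in condition (ii) then gives $h^+(x)=f^+(x)-d_k$ whenever $x\in V_k$, so $h^+$ is constant on each $V_k$ and thus takes at most $t$ distinct values.

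Properness is exactly where condition (ii) is used. If $x,y$ are adjacent in $H$ then they are adjacent in $G$, hence they lie in distinct parts $V_a,V_b$, and
\[
h^+(x)-h^+(y) = \bigl(f^+(x)-d_a\bigr) - \bigl(f^+(y)-d_b\bigr),
\]
which is nonzero by condition (ii). So $h^+$ is a proper vertex coloring of $H$, $h$ is a local antimagic labeling, and the number of induced colors is exactly the number of distinct values of $f^+(x)-d_k$ over the parts, which is $t$ (again by (ii) applied to any two distinct parts). Therefore $\chi_{la}(H)\le t$.

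The only point requiring any care is the computation at the endpoints of the deleted edge $e$; the potential worry is that removing the edge of label $1$ might shift $h^+(u)$ and $h^+(v)$ differently from the rest, but the two effects (losing the label $1$ edge and having one fewer edge to shift) cancel exactly, yielding the uniform formula $h^+(x)=f^+(x)-\deg_G(x)$. Once this is noted, both the properness check and the color count follow directly from the two clauses of condition (ii), so there is no real obstacle beyond keeping track of this cancellation.
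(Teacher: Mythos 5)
Your proposal is correct and follows essentially the same route as the paper: shift all labels down by one to get a bijection onto $[1,|E(H)|]$, observe that $h^+(x)=f^+(x)-d_k$ for every $x\in V_k$ (including the endpoints of the deleted edge, where the loss of the label-$1$ edge and the one fewer shifted edge cancel), and invoke Conditions (i) and (ii) for constancy on parts and properness. Your explicit verification of the cancellation at the endpoints of $e$ is a detail the paper leaves to the reader, but the argument is the same.
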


\begin{proof} By definition, we have the lower bound. Define $g : E(H)\to [1, |E(H)|]$ such that $g(e') = f(e') - 1$ for each $e'\in E(H)$. Observe that $g$ is a bijection with $g^+(x) = f^+(x) - d_k$ for each $x\in V_k$, $1\le k\le t$. Thus, $g^+(x)=g^+(y)$ if and only if $x,y\in V_k$, $1\le k\le t$. Therefore, $g$ is a local antimagic labeling of $H$ with $c(g) = c(f)$. Thus, $\chi_{la}(H)\le t$.
\end{proof}

\begin{lemma}\label{lem-G+e} Suppose $uv\not\in E(G)$. Let $H$ be obtained from $G$ with an edge $uv$ added. If $G$ satisfies Conditions (i) and (iii), then $\chi(H)\le \chi_{la}(H)\le t$.
\end{lemma}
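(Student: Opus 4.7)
The plan is to mimic the proof of Lemma \ref{lem-G-e} by shifting labels. Define $g : E(H) \to [1, q+1]$ by $g(uv) = 1$ and $g(e') = f(e') + 1$ for $e' \in E(G)$; this is a bijection. For any $x \in V(G)$ one computes
\[ g^+(x) = f^+(x) + \deg_G(x) + \varepsilon_x, \]
where $\varepsilon_x = 1$ if $x \in \{u, v\}$ and $\varepsilon_x = 0$ otherwise; equivalently $g^+(x) = f^+(x) + \deg_H(x)$.

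Sub-case (c) is the most symmetric: by hypothesis $\deg_G(u) = d_i - 1$, $\deg_G(v) = d_j - 1$, and $\deg_G(x) = d_k$ for every other $x \in V_k$, so $\deg_H(x) = d_k$ uniformly on each $V_k$. Hence $g^+(x) = f^+(x) + d_k$ on every part, and the distinctness condition $f^+(x) + d_a \ne f^+(y) + d_b$ of (iii) immediately gives $t$ pairwise distinct part-colors. Each $V_k$ is independent in $G$ and the new edge $uv$ crosses between $V_i$ and $V_j$, so $V_1, \ldots, V_t$ remains a proper vertex coloring of $H$; thus $g$ is a local antimagic labeling of $H$ with $t$ colors, yielding $\chi(H) \le \chi_{la}(H) \le t$. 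Sub-cases (a) and (b) use the same labeling $g$: the singleton hypothesis $|V_i| = 1$ (and $|V_j| = 1$ in (a)) means that the vertex whose $\deg_H$ overshoots its canonical part-degree can safely carry its own shifted color $f^+(u) + d_i + 1$ (and, in (a), $f^+(v) + d_j + 1$), keeping the total color count at most $t$; meanwhile, in (b), $v$ lands exactly on $f^+(v)+d_j$, the common color of $V_j \setminus \{v\}$.

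The main obstacle will be verifying, in sub-cases (a) and (b), that the ``$+1$'' overshoot in $g^+(u) = f^+(u) + d_i + 1$ does not coincidentally match the color $f^+(V_b) + d_b$ of some neighbouring part, nor $g^+(v)$ across the new edge. Condition (iii) directly controls the unshifted distinctness $f^+(x) + d_a \ne f^+(y) + d_b$; the plan is to combine this with the singleton structure of $V_i$ (and $V_j$ in (a)) and a separate treatment of each adjacency at $u, v$ in $H$ --- namely the new edge $uv$ (which gives exactly the (iii)-condition $f^+(u) + d_i \ne f^+(v) + d_j$) and the old edges $uy$ or $vy$ carried over from $G$ --- to rule out any shifted collision. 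Sub-case (c) avoids this delicate step entirely because the $-1$ degree deficits on $u$ and $v$ in $G$ are absorbed exactly by the $+1$ from the newly added edge.
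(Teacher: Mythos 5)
Your construction is the same as the paper's (set $g(uv)=1$ and $g(e')=f(e')+1$ on $E(G)$), and your treatment of sub-case (c) is complete and coincides with the paper's computation: there $\deg_G(u)=d_i-1$ and $\deg_G(v)=d_j-1$, so $g^+(x)=f^+(x)+d_k$ on every part and the inequality in Condition (iii) gives the local antimagic property with $c(g)=t$. The genuine gap is that sub-cases (a) and (b) are not actually proved. You correctly compute the overshoot $g^+(u)=f^+(u)+d_i+1$ (and $g^+(v)=f^+(v)+d_j+1$ in (a)), and you correctly observe that the colour count stays at most $t$ because the overshooting vertices lie in singleton parts; but for $g$ to be local antimagic you must also rule out $g^+(u)=g^+(y)$ for every $H$-neighbour $y$ of $u$, i.e.\ $f^+(u)+d_i+1\ne f^+(y)+d_b$ for $y\in V_b$, and, in sub-case (b), $f^+(u)+d_i+1\ne f^+(v)+d_j$ across the new edge (your claim that the new edge is controlled ``exactly'' by (iii) is only true in (a), where both endpoints overshoot by $1$ and the $1$'s cancel). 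You merely announce this as ``the main obstacle'' and sketch a ``plan''; no argument is supplied, and none can be extracted from the hypotheses as written, since Condition (iii) only asserts the unshifted inequalities $f^+(x)+d_a\ne f^+(y)+d_b$ and says nothing about the $+1$-shifted ones. So as a standalone proof of the lemma, cases (a) and (b) remain open.

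For what it is worth, the paper's own proof does not close this point either: it simply asserts $g^+(x)=f^+(x)+d_k$ for every $x\in V_k$, an identity that is exact only in sub-case (c) (and for $v$ in (b)), so it implicitly assumes the very collision-freeness you flagged. Your bookkeeping of the induced sums is thus more careful than the paper's, and the obstacle you identify is a real defect in how Conditions (iii)(a) and (iii)(b) interact with the stated conclusion (the intended reading must be that the inequalities hold with the degrees taken in $H$ at the singleton vertices). But since you neither complete the missing verification nor strengthen the hypotheses to make it possible, your attempt establishes the lemma only in sub-case (c).
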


\begin{proof} By definition, we have the lower bound. Define $g: E(H) \to [1,|E(H)|]$ such that $g(uv) = 1$ and $g(e) = f(e) + 1$ for $e\in E(G)$. Observe that $g$ is a bijection with $g^+(x) = f^+(x) + d_k$ for each $x\in V_k$, $1\le k\le t$. Thus, $g^+(x)=g^+(y)$ if and only if $x,y\in V_k$, $1\le k\le t$. Therefore, $g$ is a local antimagic labeling of $H$ with $c(g) = c(f)$. Thus, $\chi_{la}(H)\le t$.
\end{proof}

\nt In~\cite[Theorem 2.11]{Arumugam}, the authors showed that for any two distinct integers $m,n\ge 2$, $\chi_{la}(K_{m,n}) = 2$ if and only if $m\equiv n\pmod{2}$. Let $K^-_{m,n}$ be the graph $K_{m,n}$ with an edge deleted.  From the proof of ~\cite[Theorem 2.11]{Arumugam} and by Lemma~\ref{lem-G-e}, the following result is obvious. 

\begin{corollary} For any two distinct integers $m,n\ge 2$ and $m\equiv n\pmod{2}$, $\chi_{la}(K^-_{m,n}) = 2$. \end{corollary}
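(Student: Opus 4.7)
The plan is to combine the known evaluation $\chi_{la}(K_{m,n})=2$ from \cite[Theorem 2.11]{Arumugam} with the edge-deletion transfer provided by Lemma~\ref{lem-G-e}. The lower bound $\chi_{la}(K^-_{m,n})\ge 2$ is immediate, since $K^-_{m,n}$ is bipartite with at least one edge, so its chromatic number (and therefore its local antimagic chromatic number) is at least $2$.

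For the upper bound, the strategy is to start from the local antimagic $2$-coloring $f$ of $K_{m,n}$ guaranteed by Theorem~2.11 of \cite{Arumugam} (which applies precisely because $m\equiv n\pmod 2$ and $m\ne n$), and to verify that $f$ satisfies Conditions (i) and (ii) introduced before Lemma~\ref{lem-G-e} with $t=2$. Condition~(i) holds trivially: the two induced color classes $V_1, V_2$ are exactly the two parts of $K_{m,n}$, of sizes $m$ and $n$ respectively. For Condition~(ii), assume without loss of generality $m>n$, so every $x\in V_1$ has degree $d_1=n$ and every $y\in V_2$ has degree $d_2=m$. Applying Lemma~\ref{lem-2part} to $f$ with $q=mn$ gives the two induced colors
\[
c_1=\frac{n(mn+1)}{2}, \qquad c_2=\frac{m(mn+1)}{2}.
\]
Then $c_1-d_1=\tfrac{n(mn-1)}{2}$ and $c_2-d_2=\tfrac{m(mn-1)}{2}$, which are unequal whenever $m\ne n$ and $mn>1$, so Condition~(ii) holds.

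To finish, since $f$ is a bijection onto $[1,mn]$ there exists an edge $e$ with $f(e)=1$. Lemma~\ref{lem-G-e} then yields $\chi_{la}(K_{m,n}-e)\le 2$, and because $K_{m,n}$ is edge-transitive, $K_{m,n}-e\cong K^-_{m,n}$, establishing $\chi_{la}(K^-_{m,n})\le 2$.

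The only real step is the computation verifying Condition~(ii); it is short because both the degrees and the induced colors are forced by the structure of $K_{m,n}$. The mild subtlety to keep in mind is that the parity assumption $m\equiv n\pmod 2$ is what makes the colors $c_1,c_2$ integers and is exactly what guarantees the existence of the base labeling of $K_{m,n}$, so no additional case analysis on the parity of $m,n$ is needed.
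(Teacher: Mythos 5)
Your proposal is correct and follows essentially the same route as the paper: it combines the local antimagic $2$-labeling of $K_{m,n}$ from \cite[Theorem 2.11]{Arumugam} with Lemma~\ref{lem-G-e}, which is exactly what the paper does (the paper simply declares the verification ``obvious''). Your use of Lemma~\ref{lem-2part} to pin down the two induced colors and check Condition~(ii), together with the edge-transitivity remark to cover the choice of deleted edge, just fills in the details the paper omits.
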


\section{Cycle-related join graphs}\label{sec:join}

\nt Consider the join graph $C_m\vee O_n$ with $V(C_m)=\{u_i : 1\le  i\le m\}$, $V(O_n) = \{v_j : 1\le j\le n\}$ and $E(C_m\vee O_n)=\{u_iu_{i+1}: 1\le i\le m\} \cup \{u_iv_j : 1\le i\le m, 1\le j\le n\}$, where $u_{m+1}=u_1$. Let $e_i=u_iu_{i+1}$ for $1\le i\le m$. So $e_m=u_mu_1$. We shall keep these notations in this section unless stated otherwise.

\begin{theorem}\label{thm-CmOnodd}  For odd $m,n\ge 3$, $\chi_{la}(C_m \vee O_n) = 4$.      \end{theorem}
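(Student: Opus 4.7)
The plan is to prove $\chi_{la}(C_m\vee O_n)=4$ by matching lower and upper bounds.

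For the lower bound, I observe that since $m$ is odd, $\chi(C_m)=3$ and hence $\chi(C_m\vee O_n)=4$. Combined with the elementary inequality $\chi_{la}(G)\ge\chi(G)$, this gives $\chi_{la}(C_m\vee O_n)\ge 4$.

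For the upper bound, I would exhibit an explicit local antimagic labeling $f$ of $C_m\vee O_n$ inducing exactly four distinct vertex colors. The graph has $q=m(n+1)$ edges. I partition $[1,q]$ into two blocks: $[1,mn]$ for the bipartite edges $u_iv_j$ and $[mn+1,mn+m]$ for the cycle edges $e_i$. Since $m\equiv n\pmod 2$ (both odd) and $(m,n)\ne(2,2)$, a magic $(m,n)$-rectangle $(a_{i,j})$ with row constant $n(mn+1)/2$ and column constant $m(mn+1)/2$ exists, as recalled at the end of Section 1. Setting $f(u_iv_j)=a_{i,j}$ makes every $v_j$ receive the same induced color $C_0=m(mn+1)/2$. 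For the cycle part, I would take a known local antimagic $3$-coloring of $C_m$ (which exists for odd $m\ge 3$) and shift every label up by $mn$, using the labels $\{mn+1,\dots,mn+m\}$ on $e_1,\dots,e_m$ in the same pattern. This produces exactly three distinct values among the sums $S_i=f(e_{i-1})+f(e_i)$, and hence exactly three distinct cycle vertex colors $f^+(u_i)=S_i+n(mn+1)/2$.

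It then remains to verify that none of the three cycle colors coincides with $C_0$. Each $S_i$ is a sum of two distinct elements of $[mn+1,mn+m]$, so $S_i\in[2mn+3,\,2mn+2m-1]$, and $f^+(u_i)=C_0$ would force $S_i=(m-n)(mn+1)/2$. A short case analysis on the even integer $k=m-n$ shows that this forbidden value is negative when $k<0$, strictly less than $2mn+3$ when $k\in\{2,4\}$, and strictly greater than $2mn+2m-1$ when $k\ge 6$ (using $n\ge 3$). Thus the equality never occurs, and the four induced colors (namely $C_0$ together with the three cycle colors) are pairwise distinct.

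The main obstacle I anticipate is exhibiting a uniform local antimagic $3$-coloring of $C_m$ valid for all odd $m\ge 3$; the very small case $m=3$ may need to be recorded separately (e.g.\ labels $1,2,3$ on the three edges give sums $3,4,5$), while for $m\ge 5$ one can describe the labeling pattern explicitly and check that only three sums are produced. Once this cycle labeling is fixed, the verification that the four induced colors are distinct is purely arithmetic, handled by the range estimate above.
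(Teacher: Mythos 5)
Your proposal is correct and follows essentially the same route as the paper: a magic $(m,n)$-rectangle on the join edges makes all $v_j$ share one induced color, a three-sum labeling of the cycle edges gives the other three colors, and an arithmetic check separates the two groups, with the lower bound coming from $\chi_{la}\ge\chi(C_m\vee O_n)=4$. The only real difference is cosmetic: you put the large block $[mn+1,mn+m]$ on the cycle (the paper puts $[1,m]$ there and shifts the rectangle into $[m+1,mn+m]$ instead), which replaces the paper's two-case comparison $m\le n$ versus $m>n$ by your single range argument excluding $S_i=(m-n)(mn+1)/2$; both verifications are sound (just note your case list should also mention $k=0$, which is trivially excluded, and that the deferred explicit $3$-color labeling of $C_m$ is exactly the pattern $f(e_{2i-1})=i$, $f(e_{2i})=m+1-i$ used in the paper).
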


\begin{proof}  Define an edge labeling $f : E(C_m\vee O_n) \to [1, mn+m]$ such that $f(e_{2i-1}) = i$ $(1\le i\le (m+1)/2)$ and $f(e_{2i}) = m+1-i$ $(1\le i\le (m-1)/2)$ and that $f(u_iv_j)$ is the $(i,j)$-entry of a magic $(m,n)$-rectangle containing integers in $[m+1,mn+m]$ with row sum constant $n(mn+1)/2 + mn$ and column sum constant $m(mn+1)/2 + m^2$. One can check that

\begin{enumerate}[(i)]
  \item $f^+(v_j)=m(mn+1)/2 + m^2$,
  \item $f^+(u_1)=n(mn+1)/2 + mn + (m+3)/2$,
  \item $f^+(u_i) = n(mn+1)/2 + mn + m+1$ for even $i$, and
  \item $f^+(u_i) = n(mn+1)/2 + mn + m+2$ for odd $i\ge 3$.
\end{enumerate}

\nt Suppose $m\le n$. We have $m(mn+1)/2 + m^2 < n(mn+1)/2 + mn + (m+3)/2 < n(mn+1)/2 + mn + m+1 < n(mn+1)/2 + mn + m+2$. So, $\chi_{la}(G) \le 4$.

\ms\nt Suppose $m > n$. We have $m(mn+1)/2 + m^2 = n(mn+1)/2 + mn + (m-n)m + (m-n)(mn+1)/2 > n(mn+1)/2 + mn + m+2$. So, $\chi_{la}(G) \le 4$.

\ms\nt Since $\chi_{la}(G) \ge \chi(G) = 4$, we have $\chi_{la}(G) = 4$.   \end{proof}  %

\begin{corollary} For odd $m,n\ge 3$, if $H=(C_m\vee O_n)-e$ where $e\not\in E(C_m)$, then $\chi_{la}(H) = 4$.
\end{corollary}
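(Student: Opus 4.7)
The plan is to combine Theorem~\ref{thm-CmOnodd} with Lemma~\ref{lem-G-e}. For the lower bound, removing a join edge $u_iv_j$ from $C_m\vee O_n$ leaves inside $H$ the induced subgraph on $V(C_m)\cup\{v_k:k\ne j\}$, which is isomorphic to $C_m\vee O_{n-1}$; since $n\ge 3$ and $m$ is odd, $\chi(C_m\vee O_{n-1})=4$, so $\chi_{la}(H)\ge\chi(H)\ge 4$.

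For the upper bound I produce a local antimagic labeling $f$ of $G=C_m\vee O_n$ achieving four induced colors with $f(e)=1$ and with hypotheses (i) and (ii) of Lemma~\ref{lem-G-e} satisfied; the lemma then gives $\chi_{la}(H)\le 4$. The construction modifies the labeling of Theorem~\ref{thm-CmOnodd} by swapping its two label-ranges: I keep the alternating pattern on the cycle but at the top of the range, setting $f(e_{2i-1})=mn+i$ for $1\le i\le (m+1)/2$ and $f(e_{2i})=mn+m+1-i$ for $1\le i\le (m-1)/2$, so the cycle edges occupy $[mn+1,mn+m]$; and I fill the join edges with the entries of a magic $(m,n)$-rectangle on $[1,mn]$. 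Because rows and columns of such a rectangle can be permuted independently, the entry $1$ can be placed at any prescribed position $(i,j)$, realising $f(e)=1$ for the deleted edge $e=u_iv_j$.

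Repeating the vertex-sum computation of Theorem~\ref{thm-CmOnodd} with the shift gives the four candidate colors $A=m(mn+1)/2$ on every $v_j$, $B=n(mn+1)/2+2mn+(m+3)/2$ on $u_1$, $C=n(mn+1)/2+2mn+m+1$ on even $u_i$, and $D=n(mn+1)/2+2mn+m+2$ on odd $u_i\ge 3$. The chain $B<C<D$ is immediate. For Lemma~\ref{lem-G-e}, condition~(i) is exactly this four-class partition; in condition~(ii) the three $C_m$-classes share degree $n+2$, so the inequalities inside $C_m$ reduce to $B<C<D$, leaving only the three degree-adjusted inequalities $A-m\ne B-(n+2),\,C-(n+2),\,D-(n+2)$ (which also subsume $A\notin\{B,C,D\}$). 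Each of these reduces to an equation of the form $mn(m-n-4)=L(m,n)$ for a small linear form $L$, which I rule out by a short case split on $m\le n$, $m-n\in\{2,4\}$, and $m-n\ge 6$, matching the magnitude-comparison style of Theorem~\ref{thm-CmOnodd}.

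The main obstacle is this distinctness verification. Swapping the label ranges raises $B,C,D$ by $mn$ and lowers $A$ by $m^2$ relative to Theorem~\ref{thm-CmOnodd}, which tightens the gap that was comfortable there; some care is needed to certify that this gap never closes for any pair of odd $m,n\ge 3$. Once that is done, Lemma~\ref{lem-G-e} applied with $f(e)=1$ yields $\chi_{la}(H)\le 4$, and together with the lower bound delivers $\chi_{la}(H)=4$.
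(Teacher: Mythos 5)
Your proof is correct and, at its core, follows the same route as the paper: both arguments rest on Lemma~\ref{lem-G-e} applied to a $4$-color local antimagic labeling of $C_m\vee O_n$ in which a join edge carries label $1$, together with $\chi(H)=4$ for the lower bound. The difference is how that labeling is obtained. The paper takes the labeling $f$ of Theorem~\ref{thm-CmOnodd}, checks the hypotheses of Lemma~\ref{lem-nonreg}, and passes to the complement $g=mn+m+1-f$, which automatically has $c(g)=4$ and puts label $1$ on a join edge (all join edges being equivalent under automorphisms of $C_m\vee O_n$); you instead rebuild the complementary-type labeling from scratch (cycle edges on $[mn+1,mn+m]$, a magic $(m,n)$-rectangle on $[1,mn]$ for the join edges, with rows and columns permuted so that the entry $1$ sits on the deleted edge) and re-verify the color and condition~(ii) requirements by hand. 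Your route costs the extra magnitude comparisons that the paper's complementation trick sidesteps, but it buys a fully explicit labeling and avoids appealing to edge-equivalence. One small caveat: the three degree-adjusted inequalities $A-m\ne B-(n+2)$, $A-m\ne C-(n+2)$, $A-m\ne D-(n+2)$ do not logically subsume $A\notin\{B,C,D\}$, which is needed for $f$ itself to be a local antimagic labeling with four colors (condition~(i)); fortunately those three additional checks reduce to equations of exactly the same shape $mn(m-n-4)=L(m,n)$ with $L$ a small linear form, so your case split $m\le n$, $m-n\in\{2,4\}$, $m-n\ge 6$ disposes of them as well, and the argument goes through.
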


\begin{proof} Note that $G=C_m\vee O_n$ has size $mn+m$ and every vertex belonging to $C_m$ (or $O_n$) has degree $n+2$ (or $m$). Let $f$ be the local antimagic labeling as defined in the proof of Theorem~\ref{thm-CmOnodd}. We can check that $f$ satisfies the conditions of Lemma~\ref{lem-nonreg}. Therefore, $g = mn + m + 1 - f$ is also a local antimagic labeling of $G$ with $c(g)=4$ such that $g(e)=1$ for an edge $e\not\in E(C_m)$. It is straightforward to check the conditions of Lemma~\ref{lem-G-e}. By Lemma~\ref{lem-G-e}, we have $4=\chi(H) \le \chi_{la}(H)\le 4$. Thus, the result holds.
\end{proof}

\begin{theorem}\label{thm-CmOneven} For $m\ge 2$ and $n\ge 1$, $\chi_{la}(C_{2m} \vee O_{2n}) = 3$. \end{theorem}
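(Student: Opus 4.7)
The plan is to establish the lower bound $\chi_{la}(C_{2m}\vee O_{2n})\ge 3$ via the chromatic number, and then to construct a local antimagic labelling attaining exactly three induced colours.

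For the lower bound, any two adjacent cycle vertices $u_i,u_{i+1}$ together with any $v_j\in V(O_{2n})$ span a triangle in $G=C_{2m}\vee O_{2n}$, so $\chi(G)=3$ and therefore $\chi_{la}(G)\ge 3$.

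For the upper bound, I aim for a labelling whose induced colour classes are $V(O_{2n})$ (all sharing a common colour $R$), the odd-indexed cycle vertices (colour $A$), and the even-indexed cycle vertices (colour $B$). Place the labels $\{1,\ldots,2m\}$ on the cycle edges, for example $f(e_i)=i$, and place the remaining labels $\{2m+1,\ldots,2m(2n+1)\}$ on the edges $u_iv_j$ via a $2m\times 2n$ array $M=(M_{ij})$ with $f(u_iv_j)=M_{ij}$. The array $M$ must satisfy: (a) every column of $M$ has the same sum $R=m(4mn+4m+1)$, so that each $v_j$ receives the common colour $R$; and (b) writing $\sigma_i=f(e_{i-1})+f(e_i)$ for the cycle-sum at $u_i$, the row sum $R_i=\sum_j M_{ij}$ satisfies $R_i+\sigma_i=A$ for odd $i$ and $R_i+\sigma_i=B$ for even $i$.

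The main obstacle is the construction of $M$. A natural starting point is a magic $(2m,2n)$-rectangle, which exists since $(m,n)\ne(1,1)$, shifted by $2m$ so that its entries lie in $[2m+1,2m(2n+1)]$. It automatically satisfies (a) because each of its columns sums to $R$, but its rows also all have a common sum, which violates (b). To repair this I would perform a sequence of column-preserving swaps: exchanging two entries within a single column leaves every column sum invariant while shifting the two affected row sums by equal and opposite amounts. A careful bookkeeping argument, using the complementary pairing $x\leftrightarrow 2m(2n+1)+1-x$ on the entries, should enable me to realise any prescribed row-sum profile $(R_1,\ldots,R_{2m})$ whose total equals the bipartite label total $2mn(4mn+4m+1)$. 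A final verification that $A$, $B$, $R$ are pairwise distinct, with a minor tweak to the cycle labelling or an additional swap if a collision arises, completes the construction and yields $\chi_{la}(C_{2m}\vee O_{2n})=3$.
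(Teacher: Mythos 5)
Your overall strategy is the same as the paper's: lower bound from $\chi(C_{2m}\vee O_{2n})=3$, and an upper bound obtained by labelling the cycle edges with $1,\ldots,2m$ and the join edges through a $2m\times 2n$ array whose columns all sum to $R=m(4mn+4m+1)$ and whose row sums compensate the cycle contributions so that the $u_i$ of each parity share a colour. The arithmetic setup (the value of $R$ and of the total $2mn(4mn+4m+1)$) is correct. However, the proposal stops exactly where the real work begins, so there is a genuine gap: the array $M$ is never constructed. The assertion that column-preserving swaps applied to a shifted magic $(2m,2n)$-rectangle ``should enable me to realise any prescribed row-sum profile'' with the correct total is not proved and is far from automatic; the profile you need is not constant even within a parity class (with $f(e_i)=i$ the required row sums are $A-\sigma_i$, $B-\sigma_i$ with $\sigma_1=2m+1$ and $\sigma_i=2i-1$ otherwise), and you must exhibit swaps that hit these exact targets simultaneously for all $2m$ rows without disturbing rows already adjusted. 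This is precisely the content the paper supplies by writing explicit formulas for every $f(u_hv_k)$ and verifying the resulting row and column sums; without an explicit construction (or a proven realisability lemma for row-sum profiles under column-preserving swaps), the upper bound is not established.

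A second, smaller gap is the distinctness of the three colours $A$, $B$, $R$. In the paper this is not a formality: it requires a case analysis comparing $n\ge m$, $m>n$, and in the boundary cases $|m-n|=1$ one must check that certain quadratics such as $-4m^2+15m-3$ and $-4m^2+11m+1$ never vanish for integers $m$. Your ``minor tweak to the cycle labelling or an additional swap if a collision arises'' is not an argument, because any tweak changes $A$ and $B$ and reopens the question; you would need either the explicit values (as in the paper) or a proof that some choice within your family of labelings always avoids $A=R$ and $B=R$. As written, the proposal is a plausible plan rather than a proof of the upper bound.
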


\begin{proof} Let $G=C_{2m}\vee O_{2n}$. Define an edge labeling $f : E(G) \to [1,4mn+2m]$ such that $f(e_h)=h$ for $1\le h\le 2m$ and $f(u_hv_k)$ is given below, for $1\le h\le 2m$ and $1\le k\le 2n$.

\ms\nt We define $f(u_1v_1)=2m+1$ and $f(u_{2i-1}v_{1})=4m-2i+3$ for $2\le i\le m$. For $1\le i\le m$, define
\begin{enumerate}[(i)]
  \item $f(u_{2i-1}v_{2})=6m-2i+1$;
  \item $f(u_{2i-1}v_{2j-1})=2m(j-1)+2i$ and $f(u_{2i-1}v_{2j})=2m(2n+1-j)-2i+2$, for $2\le j\le n$,
  \item $f(u_{2i}v_{1})=2m(2n+1)-2i+2$ and $f(u_{2i}v_{2})=4mn-2i+2$,
  \item $f(u_{2i}v_{2j-1})=2m(2n-j+3)-2i+1$ and $f(u_{2i}v_{2j})=2m(j+1)+2i-1$, for $2\le j\le n$.
\end{enumerate}
\nt One may check that $f$ is a bijection. Observe that
\begin{enumerate}[(i)]
  \item $f(u_{2i-1}v_{1})+f(u_{2i-1}v_{2})=10m-4i+4$ and $f(u_{2i}v_{1})+f(u_{2i}v_{2})=8mn+2m-4i+4$ for $1\le i\le m$,
  \item $f(u_{2i}v_{2j-1})+f(u_{2i}v_{2j})=4m(n+2)$ for $1\le i\le m$ and $2\le j\le n$,
  \item $f(u_{2i-1}v_{2j-1})+f(u_{2i-1}v_{2j})=4mn+2$ for $1\le i\le m$ and $2\le j\le n$.
\end{enumerate}

\nt Thus
\begin{align*}
f^+(u_1) & =f(e_1)+f(e_{2m})+f(u_1v_1)+f(u_1v_2)+\sum\limits_{j=2}^n (4mn+2)= 4mn^2-4mn+2n+10m-1;\\
f^+(u_{2i-1}) &=f(e_{2i-2})+f(e_{2i-1})+(10m-4i+4)+\sum\limits_{j=2}^n (4mn+2)\\
& = (4i-3)+(10m-4i+4)+(4mn+2)(n-1)= 4mn^2-4mn+2n+10m-1
\mbox{ if $2\le i\le m$};\\
f^+(u_{2i}) &=f(e_{2i-1})+f(e_{2i})+(8mn+2m-4i+4)+\sum\limits_{j=2}^n 4m(n+2)\\
& = (8mn+2m+3)+4m(n+2)(n-1)= 4mn^2+12mn-6m+3
\mbox{ if $1\le i\le m$},
\end{align*}
\begin{align*}
f^+(v_1) & = (2m+1)+\sum_{i=2}^m(4m-2i+3) +\sum_{i=1}^m(4mn+2m-2i+2)= 4m^2n+4m^2+m;\\
f^+(v_2) & = \sum_{i=1}^m (4mn+6m-4i+3)= 4m^2n+4m^2+m;\\
f^+(v_k) & = \sum_{i=1}^m (4mn +4m+1)= 4m^2n+4m^2+m \mbox{ if $3\le k\le 2n$}.
\end{align*}

\nt Now, let $g_1 = f^+(u_{2i-1}) = 4mn^2-4mn+2n+10m-1$, $g_2 = f^+(u_{2i}) = 4mn^2+12mn-6m+3$, and $g_3 = f^+(v_j) =4m^2n+4m^2+m$. Clearly, $g_1 < g_2$.

\ms\nt Suppose $n\ge m$. We have $g_2-g_3=4mn(n-m)+m(12n-4m-7)+6>0$. Suppose $m > n$. $g_3-g_2=4mn(m-n-2)+m(4m-4n+7)-3$. When $m-n\ge 2$, clearly $g_3>g_2$. For $m-n=1$, $g_3-g_2=-4m^2+15m-3\ne 0$.

\ms\nt We now consider $g_3 - g_1 =2n[2m(m-n)-1]+m(4n+4m-9)+1$. If $m \ge n$, then $g_3 - g_1\ge 2n(m-1)+m(2n+4m-9)+1>0$.
Suppose $n > m$. Now $g_1 - g_3 =4mn(n-m-2)+4m(n-m)+2n+9m-1>0$ when $n-m\ge 2$. When $n-m=1$. $g_1 - g_3=-4m^2+11m+1\ne 0$.

\ms\nt Thus, $\chi_{la}(G) \le 3$. Since $\chi_{la}(G) \ge \chi(G) = 3$, we have $\chi_{la}(G) = 3$.   \end{proof}

\begin{corollary} For $m\ge 2$, $n\ge 1$, if $H=(C_{2m}\vee O_{2n})-e$, then $\chi_{la}(H) = 3$, where $e$ is an edge of $C_{2m}\vee O_{2n}$.
\end{corollary}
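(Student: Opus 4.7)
The plan is to reduce to two representative edges using the automorphism group of $G := C_{2m}\vee O_{2n}$ and then apply Lemmas~\ref{lem-nonreg} and~\ref{lem-G-e} to the explicit labeling $f$ built in Theorem~\ref{thm-CmOneven}. For the lower bound, removing any single edge from $G$ still leaves a triangle $u_iu_{i+1}v_j$ intact (only two such triangles out of $4mn\ge 4$ can be destroyed by one edge), so $\chi_{la}(H)\ge\chi(H)=3$.

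For the upper bound, note that $\operatorname{Aut}(G)$ contains $D_{2m}\times S_{2n}$, which acts transitively on the cycle edges and transitively on the spoke edges. Precomposing any local antimagic labeling with an automorphism yields another local antimagic labeling with the same color number and with the hypotheses of Lemmas~\ref{lem-nonreg} and~\ref{lem-G-e} preserved (they depend only on the degree sequence of $G$ and on the multiset $\{g_1,g_2,g_3\}$ of induced colors). Thus it suffices to treat $e=e_1$ and $e=u_2v_1$. From the proof of Theorem~\ref{thm-CmOneven}, the color classes of $f$ are $V_1=\{u_{2i-1}:1\le i\le m\}$, $V_2=\{u_{2i}:1\le i\le m\}$, $V_3=\{v_j:1\le j\le 2n\}$, with common induced colors $g_1,g_2,g_3$ and common degrees $d_1=d_2=2n+2$, $d_3=2m$; in particular, same-color vertices have equal degree in $G$, so Condition (i) of both lemmas is automatic.

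If $e=e_1$, then $f(e_1)=1$, so I would invoke Lemma~\ref{lem-G-e}; its Condition (ii) reduces to $g_1\ne g_2$ (given) together with $g_i-g_3\ne 2(n-m+1)$ for $i\in\{1,2\}$, verified by direct substitution of the closed forms for $g_1,g_2,g_3$. If $e=u_2v_1$, I first apply Lemma~\ref{lem-nonreg} to obtain the companion labeling $g:=q+1-f$, where $q=4mn+2m$; its Condition (ii) amounts to $(q+1)\cdot 2(n-m+1)\ne g_i-g_3$ for $i\in\{1,2\}$, which follow from the explicit formulas (the left side dominates except in a few boundary cases handled individually). Observing that $f$ attains its maximum label $q$ at the edge $u_2v_1$, we get $g(u_2v_1)=1$, and Lemma~\ref{lem-G-e} applied to $g$ then yields $\chi_{la}(H)\le 3$, with Condition (ii) now reducing to $q\cdot 2(n-m+1)\ne g_i-g_3$, handled in the same way.

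The only genuine work is the tedious but routine verification of these polynomial inequalities in $m,n$. Each of them reduces to showing that a polynomial of degree at most two in $m,n$ is nonzero on positive integers in the prescribed range, and the few potentially troublesome regimes (typically $n-m\in\{-1,0,1\}$, where high-order terms can cancel) are dispatched by a finite case check. This, rather than any conceptual obstacle, will be the main technical burden of the proof.
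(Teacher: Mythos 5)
Your proposal is correct and follows essentially the same route as the paper: apply Lemma~\ref{lem-G-e} directly to $f$ when $e$ is a cycle edge (where $f(e_1)=1$, after a symmetry reduction), and for a spoke edge use Lemma~\ref{lem-nonreg} to pass to $g=4mn+2m+1-f$ with $g(u_2v_1)=1$ before invoking Lemma~\ref{lem-G-e}; the paper merely leaves the resulting polynomial checks as ``straightforward.'' (A minor quibble: deleting a cycle edge destroys $2n$ triangles, not two, but since there are $4mn$ triangles in total the lower bound $\chi(H)=3$ still stands.)
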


\begin{proof} Note that $G=C_{2m}\vee O_{2n}$ has size $4mn+2m$ where every vertex belonging to  $C_{2m}$ (or $O_{2n}$) has degree $2n+2$ (or $2m$). Let $f$ be the local antimagic labeling as defined in the proof of Theorem~\ref{thm-CmOneven}. Suppose $e\in E(C_{2m})$. It is straightforward to check that $f$ satisfies the conditions of Lemma~\ref{lem-G-e}. Thus, we have $3=\chi(H) \le \chi_{la}(H)\le 3$.  Suppose $e\not\in E(C_{2m})$. We can check that $f$ satisfies the conditions of Lemma~\ref{lem-nonreg}. Therefore, $g = 4mn + 2m + 1 - f$ is also a local antimagic labeling of $G$ with $c(g)=3$ such that $g(e)=1$. It is straightforward to check the conditions of Lemma~\ref{lem-G-e}. By Lemma~\ref{lem-G-e}, we have $3=\chi(H) \le \chi_{la}(H)\le 3$. Thus, the result holds.
\end{proof}

\nt Since for odd $m,n\ge 3$, $\chi_{la}(C_m\vee O_n) = \chi_{la}(C_m) + 1 = \chi(C_m) + 1$, and for even $n\ge 2$, $\chi_{la}(C_m \vee O_n) = \chi_{la}(C_m) = \chi(C_m) + 1$, Theorems~\ref{thm-CmOnodd} and~\ref{thm-CmOneven} provide further evidence that Conjecture~\ref{conj-joint} holds.

\ms\nt Note that $C_m \vee O_1 = W_m$, the wheel graph of order $m+1\ge 4$. In~\cite[Theorem 6]{LNS}, the authors proved that $\chi_{la}(W_m)=3$ if $m\equiv0\pmod{4}$. In~\cite[Theorem 2.14]{Arumugam}, the authors proved that $\chi_{la}(W_m)=3$ if $m\equiv2\pmod{4}$, and $\chi_{la}(W_m)=4$ if $m$ is odd. We note that for $m\equiv 1\pmod{4}$, the defined local antimagic labeling $f$ (or $f_3$ in the proof) has three errors that should be corrected as $f(v_iv) = (8m + 5 - i)/4$ for $i\equiv 1\pmod{4}, i\ne 1$; $f(v_iv) = (7m + 4 - i)/4$ for $i\equiv 3\pmod{4}$; and $f^+(v_i) = (11m+13)/4$ for odd $i\ne 1$. Moreover, for $m\equiv 3\pmod{4}$, the induced vertex label for $v_i$, $i\ne 1$ is odd, should be $9(m+1)/4$.  


\begin{theorem}\label{thm-W4-e} $$\chi_{la}(W_4 - e) =\begin{cases} 3 & \mbox{ if } e\not\in E(C_4),\\ 4 & \mbox{ otherwise.} \end{cases}$$  \end{theorem}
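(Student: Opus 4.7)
By the full automorphism group of $W_4=C_4\vee O_1$, the four spokes form one orbit and the four rim edges form another, so it suffices to treat the two representative cases $e=u_2v_1$ and $e=u_1u_2$. In both instances $\chi_{la}(W_4-e)\ge\chi(W_4-e)=3$ is immediate from a surviving triangle through $v_1$, so the work lies in producing matching upper bounds and, for the rim case, strengthening the lower bound to $4$.

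The two upper bounds are by explicit construction. For the spoke case, the labeling $f(u_1u_2)=2$, $f(u_2u_3)=6$, $f(u_3u_4)=1$, $f(u_4u_1)=3$, $f(u_1v_1)=7$, $f(u_3v_1)=5$, $f(u_4v_1)=4$ induces $f^+(u_1)=f^+(u_3)=12$, $f^+(u_2)=f^+(u_4)=8$ and $f^+(v_1)=16$, giving three colors. For the rim case, the labeling $f(u_2u_3)=2$, $f(u_3u_4)=5$, $f(u_4u_1)=1$, $f(u_1v_1)=4$, $f(u_2v_1)=3$, $f(u_3v_1)=7$, $f(u_4v_1)=6$ produces the four distinct colors $5,\,12,\,14,\,20$.

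The crux of the theorem is the strict lower bound $\chi_{la}(W_4-u_1u_2)\ge 4$. I would argue by contradiction: suppose a local antimagic labeling $f$ uses only three colors. Because $v_1$ belongs to three triangles of $W_4-u_1u_2$ and the residual graph on $\{u_1,u_2,u_3,u_4\}$ is the path $u_1u_4u_3u_2$, the three color classes are forced to be $\{v_1\}$, $\{u_1,u_3\}$ and $\{u_2,u_4\}$. Writing $a,b,c$ for the labels on $u_2u_3,\,u_3u_4,\,u_4u_1$ and $p,q,r,s$ for the spoke labels on $u_1v_1,\,u_2v_1,\,u_3v_1,\,u_4v_1$, the required equalities $f^+(u_1)=f^+(u_3)$ and $f^+(u_2)=f^+(u_4)$ become
\[
c+p=a+b+r \qquad\text{and}\qquad a+q=b+c+s.
\]
Since $u_1$ and $u_3$ are non-adjacent and their incident edges exhaust every label except $q$ and $s$, one obtains $2f^+(u_1)=28-q-s$ and, symmetrically, $2f^+(u_2)=28-p-r$. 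These force $p+r$ and $q+s$ to be even, and the remaining inequality $f^+(u_1)\ne f^+(u_2)$ becomes $p+r\ne q+s$.

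Because $[1,7]$ contains only three even labels, the even-sum requirement leaves only three parity patterns on $\{p,q,r,s\}$: (i) all four odd (so $\{a,b,c\}=\{2,4,6\}$); (ii) $\{p,r\}\subset\{2,4,6\}$ with $\{q,s\}$ odd; or (iii) the image of (ii) under the automorphism swapping $u_1\leftrightarrow u_2$ and $u_3\leftrightarrow u_4$. Case~(i) collapses quickly: $c+p=a+b+r$ rewrites as $r-p=2(c-6)$, pinning $c=4$, and then $a+q=b+c+s$ yields $q-s=\pm 8$, impossible in $\{1,3,5,7\}$. Case~(iii) is equivalent to case~(ii) by the automorphism, so the main obstacle is case~(ii). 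I would handle it by enumerating the three possibilities $\{p,r\}\in\{\{2,4\},\{2,6\},\{4,6\}\}$ and, in each, exploiting the difference identities $p-r=a+b-c$ and $q-s=b+c-a$ derived from the two equations. Only a handful of triples $(a,b,c)$ admit a completion to a permutation of $[1,7]$, and direct verification shows that every such completion satisfies $p=b+s$, equivalently $p+r=q+s$, contradicting $f^+(u_1)\ne f^+(u_2)$. The argument is elementary but calls for careful bookkeeping over this finite list of configurations, and I expect that to be the most delicate step.
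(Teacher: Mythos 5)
Your proposal is correct and takes essentially the same approach as the paper: both force the color classes $\{v_1\}$, $\{u_1,u_3\}$, $\{u_2,u_4\}$, use the complement identities $2f^+(u_1)=28-q-s$ and $2f^+(u_2)=28-p-r$ to obtain the parity restrictions, reduce by the symmetry swapping the two rim classes, and finish with a finite case check, while your explicit labelings (colors $8,12,16$ in the spoke case and $5,12,14,20$ in the rim case) supply the upper bounds that the paper obtains from a figure and from its case discussion. Two harmless remarks: in your case (i) the second identity gives $q-s\in\{0,8\}$ rather than $\pm 8$ (still impossible since the labels are distinct and lie in $[1,7]$), and the case (ii) enumeration you defer does close exactly as you assert (every completion of the two equations there forces $p+r=q+s$), so nothing essential is missing.
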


\begin{proof} The graph in Figure~\ref{fig:G4-1} shows that $W_4-e$ admits a local antimagic labeling $f$ with $c(f)=3$ so that $\chi_{la}(W_4-e)=3$ if $e\not\in E(C_4)$.

\begin{figure}[H] 
  \centering
  \includegraphics[bb=0 0 213 216,width=1in,height=1in,keepaspectratio]{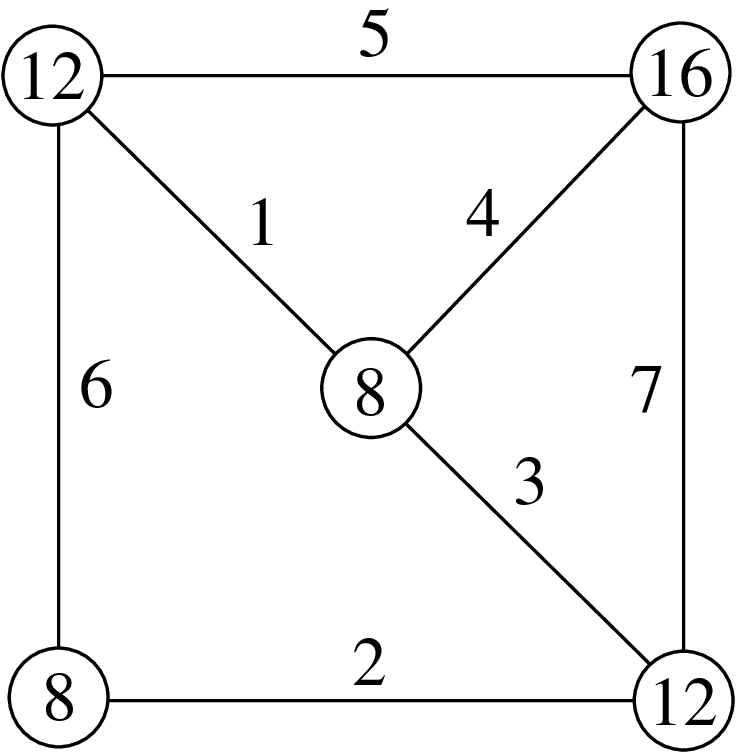}
  \caption{$W_4 - e$}
  \label{fig:G4-1}
\end{figure}

\ms\nt Suppose $e\in E(C_4)$. Without loss of generality we may assume that $e=u_4u_1$. Suppose there were a local antimagic labeling $f$ of $W_4-e$ with $c(f)=3$. Then $f^+(v_1)=c$, $f^+(u_1)=f^+(u_3)=a$ and $f^+(u_2)=f^+(u_4)=b$, where $a,b,c$ are distinct.

\ms\nt Clearly
\begin{equation}\label{eq-W4} 28=\sum_{i=1}^7 i =2a+f(v_1u_2)+f(v_1u_4)=2b+f(v_1u_1)+f(v_1u_3).\end{equation}
Thus, $f(v_1u_2)\equiv f(v_1u_4)\pmod{2}$ and $f(v_1u_1)\equiv f(v_1u_3)\pmod{2}$.

\ms\nt It is easy to check that $\{f(u_1u_2), f(u_2 u_3), f(u_3u_4)\}\ne \{2,4,6\}$. So we may assume that $f(v_1u_1)$ and $f(v_1u_3)$ are odd, and $f(v_1u_2)$ and $f(v_1u_4)$ are even. Under these conditions and from \eqref{eq-W4} we have
$9\le a\le 11$ and $8\le b\le 12$.
\begin{enumerate}[1.]
\item Suppose $a=9$. Then $f(v_1u_2)+f(v_1u_4)=10$ and hence $\{f(v_1u_2), f(v_1u_4)\}=\{4,6\}$. This implies that $f(u_1u_2)=2$ and $f(v_1u_1)=7$. If $f(v_1u_2)=4$ and $f(v_1u_4)=6$, then $f(u_2u_3)=f(u_3u_4)$ which is impossible. Thus $f(v_1u_2)=6$ and $f(v_1u_4)=4$. This implies that $9\le 2+6+f(u_2u_3)=b=4+f(u_3u_4)\le 9$. Hence $b=9=a$ which is a contradiction.

\item Suppose $a=10$. We have $\{f(v_1u_1), f(u_1u_2)\}=\{3,7\}$ and $\{f(v_1u_3),f(u_2u_3), f(u_3u_4)\}=\{1, 4, 5\}$. Since $f(v_1u_2)+f(v_1u_4)=8$,
      $\{f(v_1u_2),f(v_1u_4)\}=\{2,6\}$.
      Since $b\ge 8$, $f(v_1u_4)=6$. Hence $f(v_1u_2)=2$. Since $a\ne b$, $f(u_3u_4)=5$ and hence $f(u_2u_3)=4$. Now $f^+(u_2)\ne b=11$, which is a contradiction.
\item Suppose $a=11$. We have $f(v_1u_2)+f(v_1u_4)=6$. This implies that $\{f(v_1u_2),f(v_1u_4)\}=\{2,4\}$. Since 4 is occupied and $f(v_1u_1)+f(u_1u_2)=11$, $f(v_1u_1)=5$ and $f(u_1u_2)=6$. Also we have $\{f(v_1u_3), f(u_2u_3), f(u_3u_4)\}=\{1, 3, 7\}$. Since $b\ge 8$, $f(u_3u_4)=7$. Since $b\ne a$, $f(v_1u_4)=2$. Now $b=9$ and $f^+(u_2)\ge 10$ which yields
a contradiction. \end{enumerate}
As a conclusion, $\chi_{la}(W_4-e)\ge 4$. Note that from the discussion above, we have obtained a local antimagic labeling $g$ for $W_4-e$ with $c(g)=4$.
\end{proof}

\begin{theorem}\label{thm-Wm-e-even} Let $e$ be an edge of $W_m$. For even $m\ge 6$, $\chi_{la}(W_m - e)=3$.
\end{theorem}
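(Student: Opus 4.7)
My plan is first to establish the lower bound $\chi_{la}(W_m-e)\ge 3$ by showing $\chi(W_m-e)=3$. For any edge $e\in E(W_m)$, since $m\ge 6$, there is at least one index $i\in[1,m]$ for which the three edges $u_iu_{i+1}$, $v_1u_i$ and $v_1u_{i+1}$ are all distinct from $e$, so the triangle $v_1u_iu_{i+1}$ survives in $W_m-e$ and $\chi(W_m-e)\ge 3$. Conversely, since $m$ is even, $C_m$ is bipartite and $W_m-e$ admits a proper $3$-colouring (two colours on $C_m$ and a third on the hub), hence $\chi(W_m-e)=3$.

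For the matching upper bound I would invoke the known local antimagic $3$-labeling $f$ of $W_m$ provided by~\cite[Theorem~2.14]{Arumugam} when $m\equiv 2\pmod{4}$ and by~\cite[Theorem~6]{LNS} when $m\equiv 0\pmod{4}$. In both constructions the three induced colour classes are $V_1=\{v_1\}$, $V_2=\{u_i:i\text{ odd}\}$ and $V_3=\{u_i:i\text{ even}\}$, with corresponding degrees $d_1=m$ and $d_2=d_3=3$, matching condition~(i) of the framework preceding Lemma~\ref{lem-G-e} with $t=3$.

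Since $\mathrm{Aut}(W_m)$ acts transitively on the set of cycle edges and transitively on the set of spoke edges, it suffices to handle one representative $e$ from each of these two orbits. In each case the plan is to first verify condition~(ii) of Lemma~\ref{lem-nonreg}, so that $g=2m+1-f$ is again a local antimagic $3$-labeling, and then, possibly after a cyclic rotation of the labels around the wheel and/or replacing $f$ by $g$, arrange that the edge carrying label $1$ is precisely the chosen representative. Verifying condition~(ii) of the framework preceding Lemma~\ref{lem-G-e} then licenses the application of that lemma, yielding $\chi_{la}(W_m-e)\le 3$ and completing the proof.

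The main obstacle will be the bookkeeping needed to verify the separation inequalities in Lemmas~\ref{lem-nonreg} and~\ref{lem-G-e} for the two explicit labelings drawn from~\cite{Arumugam,LNS}: one must confirm that the three induced colour values $f^+(v_1)$, $f^+(u_{\text{odd}})$, $f^+(u_{\text{even}})$ remain pairwise distinct both after complementation (so that $(2m+1)(\deg x-\deg y)\ne f^+(x)-f^+(y)$) and after subtracting the respective degrees $m$, $3$ and $3$. This reduces to a short modular case analysis on $m\bmod 4$, incorporating the corrections to~\cite[Theorem~2.14]{Arumugam} noted immediately after Theorem~\ref{thm-CmOneven}.
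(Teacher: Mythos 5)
Your overall strategy---lower bound from $\chi(W_m-e)=3$, upper bound by taking the known $3$-colour local antimagic labelings of $W_m$ from \cite{Arumugam,LNS}, complementing via Lemma~\ref{lem-nonreg} when necessary so that label $1$ sits on the edge to be deleted, and then applying Lemma~\ref{lem-G-e}, with one representative per edge orbit---is exactly the route the paper takes, but only for large $m$, and this is where your proposal has a genuine gap. The paper does not run the lemma-based argument for $m=6$ at all, nor for $m=8$ when $e$ is a spoke: it exhibits explicit labelings (Figures~\ref{fig:W6-e} and~\ref{fig:G8-1}) for those graphs and invokes the generic argument only for $m\ge 8$ when $e\in E(C_m)$ and for $m\ge 10$ when $e$ is a spoke. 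This is not cosmetic. The separation inequalities you defer to ``bookkeeping'' compare the hub colour (a sum of $m$ spoke labels) with the rim colours (sums of three labels), and for the smallest wheels these are of comparable magnitude, so the hypotheses of Lemma~\ref{lem-nonreg} and of Lemma~\ref{lem-G-e} (the three colours must remain distinct after subtracting the degrees $m$ and $3$, respectively after complementation by $2m+1$) are not automatic and are not claimed by the paper in that range. Your blanket assertion that everything ``reduces to a short modular case analysis on $m\bmod 4$'' for all even $m\ge 6$ therefore leaves precisely the cases $m=6$ (both edge types) and $m=8$ (spoke) unestablished; to close the proof you must either verify the inequalities for the explicit cited labelings at these values (if they hold) or supply ad hoc labelings there, as the paper does.

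A smaller but real presumption: you propose to ``arrange that the edge carrying label $1$ is precisely the chosen representative'' by rotating and/or complementing. A rotation is an automorphism and preserves the type (cycle edge versus spoke) of the edge carrying any given label, and complementation only exchanges the roles of labels $1$ and $2m$; so your plan works only because, in the cited constructions, label $1$ lies on a cycle edge and label $2m$ lies on a spoke ($f(u_1u_2)=1$, and $f(v_1u_2)=2m$ or $f(v_1u_4)=2m$ according to $m\bmod 4$), which is exactly what the paper reads off from those proofs. These structural facts must be quoted and used, not ``arranged''; if both extreme labels had landed on cycle edges, your treatment of the spoke orbit would have no starting point. With these facts stated and the small cases handled separately, your argument coincides with the paper's.
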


\begin{proof}
\ms\nt Consider $m=6$. In Figure~\ref{fig:W6-e}, we have the local antimagic labelings $f$ with $c(f)=3$ for the two cases of $W_6-e$. \\[1mm]
\begin{figure}[H] 
  \centering
  \centerline{\epsfig{file=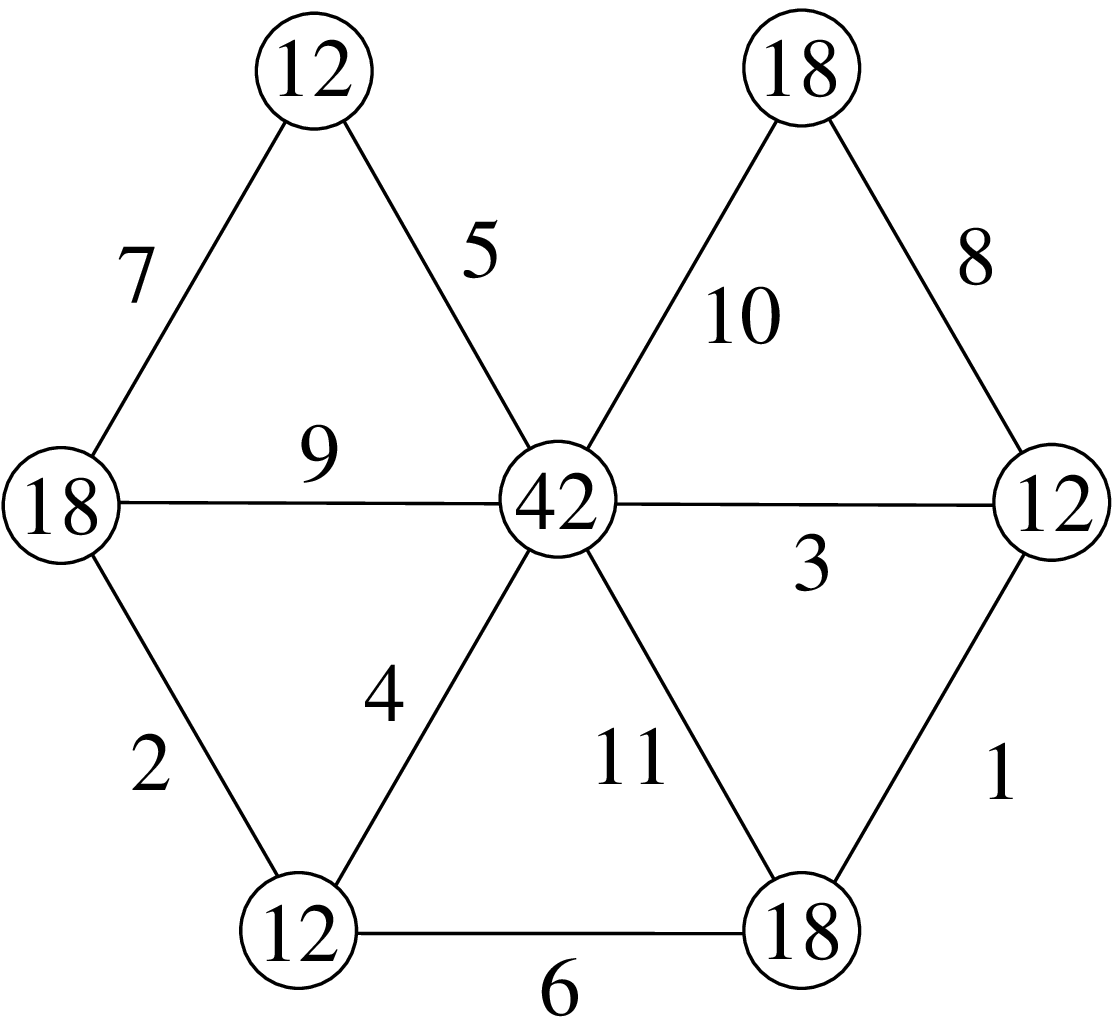, width=3cm}\qquad \epsfig{file=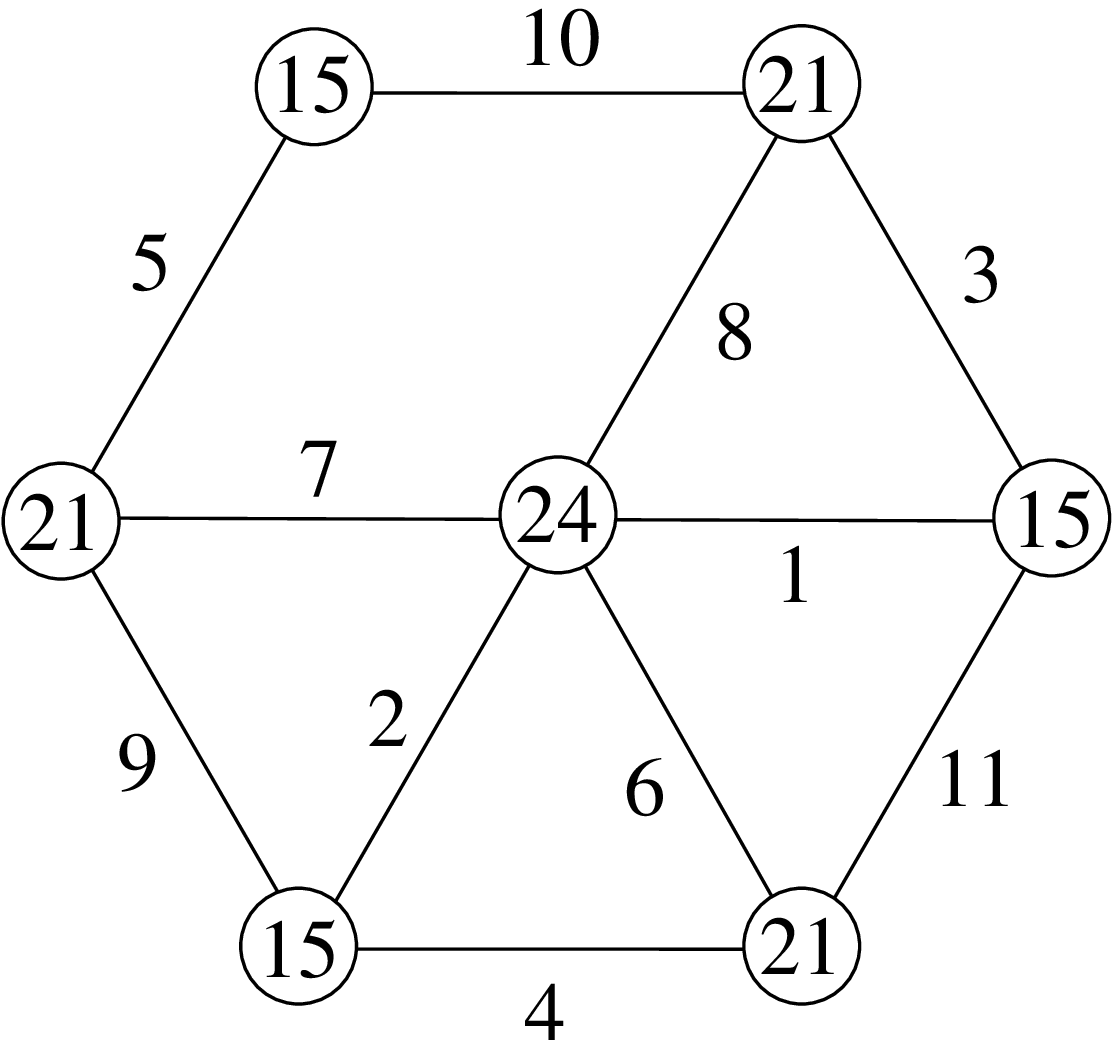, width=3cm}}
  \caption{$W_6 - e$ with $c(f)=3$}
  \label{fig:W6-e}
\end{figure}
\nt Thus, $\chi_{la}(W_6-e)=3$.

\ms\nt Consider $m\ge 8$. We have two cases.

\ms\nt Case (a). $e\in E(C_m)$. By~\cite[Theorem 6]{LNS} and~\cite[Theorem 2.14]{Arumugam} and the proofs, we have $\chi_{la}(W_m)=3$ such that the corresponding local antimagic labeling $f$ has $f(u_1u_2)=1$.
By symmetry we may let $e=u_1u_2$. By Lemma~\ref{lem-G-e}, we get $\chi_{la}(W_m - e)\le 3$. Since $\chi_{la}(W_m - e)\ge \chi(W_m-e)=3$, $\chi_{la}(W_m - e)=3$. 

\ms\nt Case (b). $e\not\in E(C_m)$. For $m=8$, the graph in Figure~\ref{fig:G8-1} shows that $W_8 - e$ admits a local antimagic labeling $g$ with $c(g)=3$. Thus, $\chi_{la}(W_8 - e)=3$.

\ms\nt Consider $m\ge 10$. By~\cite[Theorem 6]{LNS} and~\cite[Theorem 2.14]{Arumugam} and the proofs, we know that $W_m$ admits a local antimagic labeling $f$ with $f(v_1u_2)=2m$ if $m\equiv0\pmod{4}$, and $f(v_1u_4)=2m$ if $m\equiv 2\pmod{4}$. 
 By symmetry we may let $e=v_1u_2$ if $m\equiv 0\pmod{4}$, and $e=v_1u_4$ if $m\equiv2\pmod{4}$. It is straightforward to check the conditions of Lemma~\ref{lem-G-e}. By Lemma~\ref{lem-G-e}, we get $\chi_{la}(W_m - e)\le 3$. Since $\chi_{la}(W_m - e)\ge \chi(W_m-e)=3$, $\chi_{la}(W_m - e)=3$.
\end{proof}

\begin{figure}[H] 
\centering
\begin{subfigure}[$W_8 - v_1u_1$]{
\epsfig{file=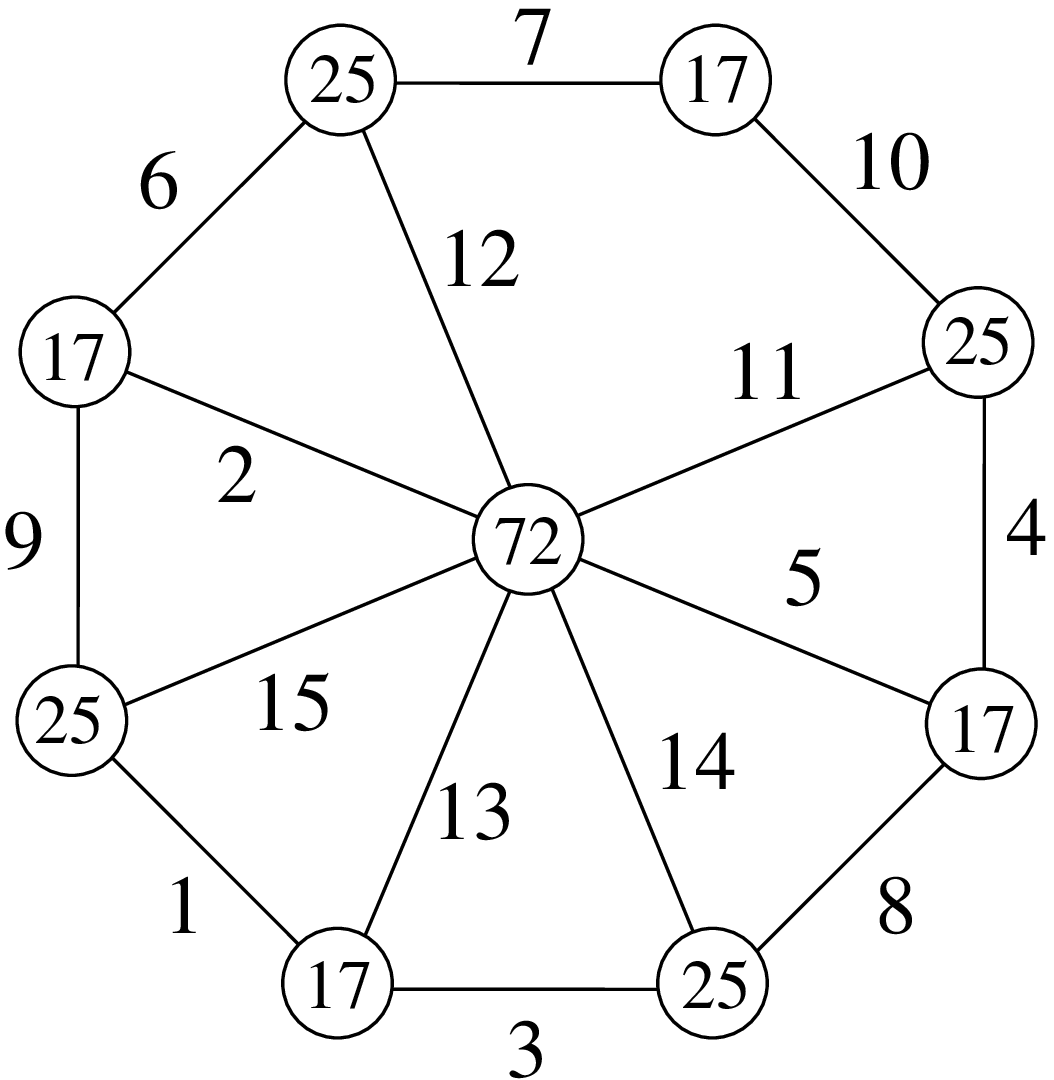, width=1.2in}\label{fig:G8-1}}
\end{subfigure}\qquad
 \begin{subfigure} [$W_5 - v_1u_5$.]{
\raisebox{3mm}[3mm][2mm]{\epsfig{file=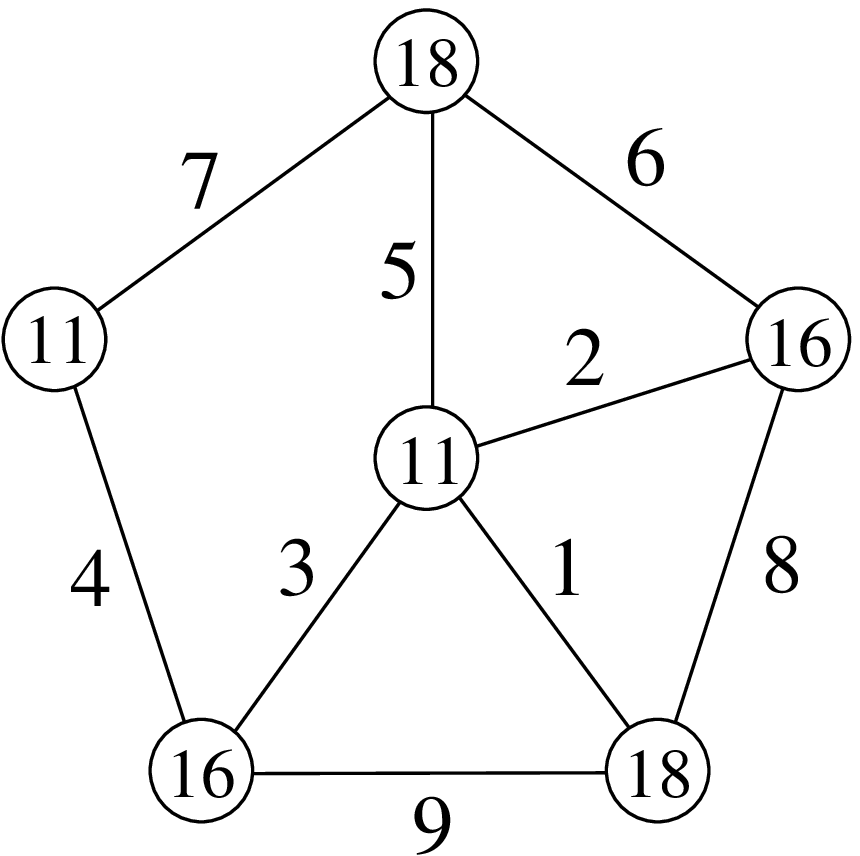, width=1in}\label{fig:W5-e2}}}
\end{subfigure} \qquad
\begin{subfigure} [$W_7 - v_1u_7$.]{
\raisebox{3mm}[3mm][2mm]{\epsfig{file=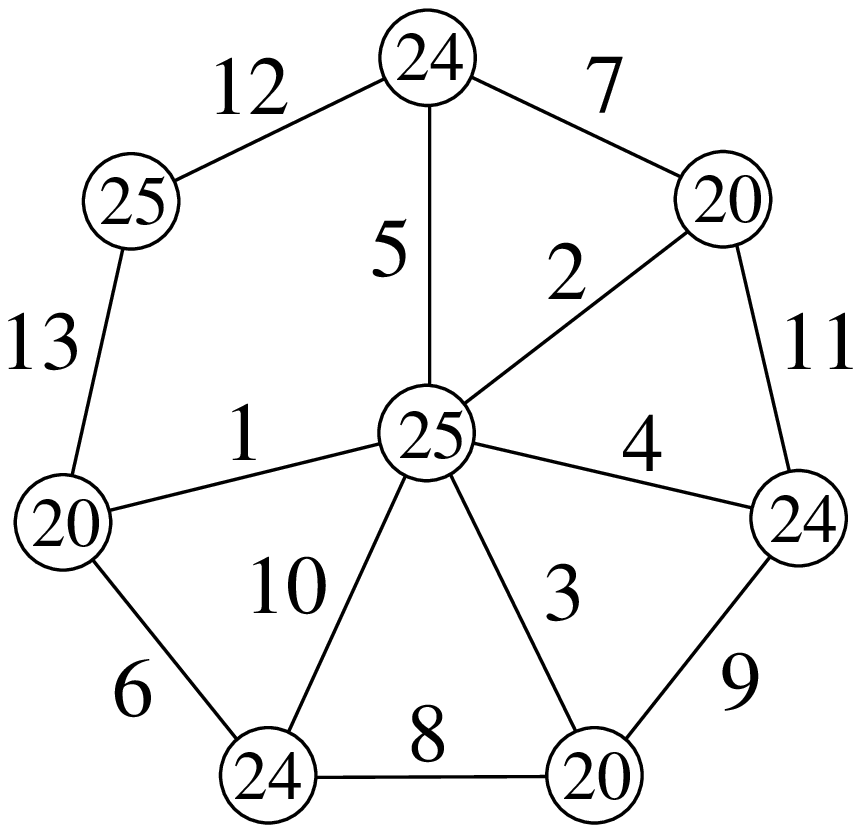, width=1.1in}\label{fig:W7-e2}}}
\end{subfigure}
\caption{Some wheels with a spoke deleted}
\end{figure}

\begin{theorem}\label{thm-oddwheel-e} Suppose $m\ge 3$ is odd. If $e\not\in E(C_m)$,  then $$\chi_{la}(W_m - e) = \begin{cases}3 & \mbox{ for }  m=3,5,7;\\ 4 & \mbox{ otherwise.}\end{cases}$$ If $e\in E(C_m)$, then $3\le \chi_{la}(W_m - e)\le 4$. \end{theorem}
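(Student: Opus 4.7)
The plan is to split on whether $e$ is a spoke ($e\notin E(C_m)$) or a rim edge ($e\in E(C_m)$), and within the spoke case to further treat the small values $m\in\{3,5,7\}$ separately from $m\ge 9$. In every case the lower bound $\chi_{la}(W_m-e)\ge 3$ is immediate from $\chi(W_m-e)=3$. The upper bounds will be produced by combining Lemmas~\ref{lem-nonreg} and~\ref{lem-G-e} with the known $4$-colour local antimagic labeling of $W_m$ from \cite[Theorem~2.14]{Arumugam} (as corrected in the remarks just above), while the matching lower bound for odd $m\ge 9$ with $e$ a spoke will come from a short extremal counting argument.

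For the upper bound in the cases $e\in E(C_m)$, or $e$ a spoke with $m\ge 9$, I would take the $4$-coloured local antimagic labeling $f$ of $W_m$ just mentioned and, using the rotational symmetry of $W_m$ on its rim (and replacing $f$ by its complement $g=2m+1-f$ via Lemma~\ref{lem-nonreg} if needed), arrange that $f$ assigns label $1$ to the target edge $e$. The four colour classes $\{v_1\},V_1,V_2,V_3$ have constant degrees $m,3,3,3$, so Condition~(i) of Lemma~\ref{lem-G-e} is immediate, and the only nontrivial instance of Condition~(ii) is $f^+(v_1)-f^+(u_i)\ne m-3$ for rim vertices $u_i$; once this is checked, Lemma~\ref{lem-G-e} yields $\chi_{la}(W_m-e)\le 4$. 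The three small spoke cases $m\in\{3,5,7\}$ are settled by exhibiting explicit labelings with three induced colours: for $m=3$, $W_3-e=K_4-e$ admits such a labeling directly, and for $m\in\{5,7\}$ the labelings displayed in Figures~\ref{fig:W5-e2} and~\ref{fig:W7-e2} do the job.

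The heart of the proof is the lower bound $\chi_{la}(W_m-e)\ge 4$ for odd $m\ge 9$ with $e$ a spoke, which I would prove by contradiction. By rotational symmetry write $e=v_1u_m$, and suppose $f$ is a local antimagic labeling of $W_m-e$ with $c(f)=3$. Since the odd cycle $C_m$ survives in $W_m-e$, its induced $3$-colouring already exhausts all three available colours; since $v_1$ is adjacent to $u_1,\dots,u_{m-1}$, the colour of $v_1$ cannot appear on that path and is therefore forced to equal the colour of $u_m$, giving
\[
\sum_{i=1}^{m-1} f(v_1u_i) \;=\; f^+(v_1) \;=\; f^+(u_m) \;=\; f(u_{m-1}u_m)+f(u_mu_1).
\]
The left-hand side is a sum of $m-1$ distinct elements of $[1,2m-1]$, hence at least $1+2+\cdots+(m-1)=m(m-1)/2$, while the right-hand side is at most $(2m-1)+(2m-2)=4m-3$. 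For $m\ge 9$ one has $m(m-1)/2-(4m-3)=(m^2-9m+6)/2>0$, which is the desired contradiction; the cut-off $m\ge 9$ is sharp because $m(m-1)/2\le 4m-3$ for $m\le 7$, which is precisely why the explicit small-$m$ labelings are genuinely needed. I expect the main obstacle to be the bookkeeping in the upper-bound step, namely verifying that the (corrected) Arumugam labeling admits a rotation placing label $1$ on the target edge \emph{and} satisfies Condition~(ii) of Lemma~\ref{lem-G-e}; the lower-bound argument, once one notices that $f^+(v_1)=f^+(u_m)$ is forced, reduces to the one-line extremal count above.
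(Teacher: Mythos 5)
Your proposal is correct and follows essentially the same route as the paper: the lower bound for odd $m\ge 9$ with a spoke deleted comes from the forced equality $f^+(v_1)=f^+(u_m)$ plus the same extremal count ($m(m-1)/2\le 4m-3$, i.e.\ $k(2k+1)\le 8k+1$), the cases $m=3,5,7$ are handled by the same explicit labelings, and the upper bounds use the (corrected) Arumugam labeling of $W_m$ combined with Lemmas~\ref{lem-nonreg} and~\ref{lem-G-e} exactly as the paper does. No gaps worth flagging beyond the verification of the Arumugam labeling's properties, which the paper also leaves as a routine check.
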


\begin{proof} Suppose $e\not\in E(C_m)$. Note that $\chi_{la}(W_m - e)\ge\chi(W_m - e) = 3$. Suppose the equality holds. Let $m=2k+1$ and $f$ is a local antimagic labeling of $W_{2k+1}-e$ with $c(f)=3$. Without loss of generality, assume $e=v_1u_{2k+1}$. Thus, we must have $f^+(v_1) = f^+(u_{2k+1}) \not= f^+(u_1) = f^+(u_3) = \cdots = f^+(u_{2k-1}) \not= f^+(u_2) = f^+(u_4) = f^+(u_{2k})$. Thus, $k(2k+1)\le f^+(v_1)=f^+(u_{2k+1})\le 8k+1$ giving us $1\le k\le 3$. Thus, $\chi_{la}(W_{m} - e) \ge 4$ for $m\ge 9$.  For $m=3$, $W_3-e \cong K_{1,1,2}$. The labeling is obvious.  For $m=5$, the labeling in Figure~\ref{fig:W5-e2} shows that $\chi_{la}(W_5 - v_1u_5) = 3$. For $m=7$, the labeling in Figure~\ref{fig:W7-e2} shows that $\chi_{la}(W_7 - v_1u_7) = 3$.

\ms\nt Consider $m\ge 9$. By~\cite[Theorem 2.14]{Arumugam} and the proofs, we know that $W_m$ admits a local antimagic labeling $f$ with $c(f)=4$. Moreover, $f(v_1u_5)=2m$ if $m\equiv1\pmod{4}$, and $f(v_1u_2)=2m$ if $m\equiv3\pmod{4}$. It is straightforward to check the conditions of Lemmas~\ref{lem-nonreg} and~\ref{lem-G-e}. By Lemma~\ref{lem-nonreg}, we know $W_m$ admits a local antimagic labeling $g$ with $g(v_1u_5)=1$ if $m\equiv1\pmod{4}$, and $g(v_1u_2)=1$ if $m\equiv3\pmod{4}$. By Lemma~\ref{lem-G-e}, we get $\chi_{la}(W_m - e)= 4$.

\ms\nt Suppose $e\in E(C_m)$. By~\cite[Theorem 2.14]{Arumugam} and the proofs, together with Lemma~\ref{lem-G-e}, we know that $\chi_{la}(W_m-e)\le 4$. \end{proof}




\begin{theorem}\label{thm-CmVCnodd} For odd $m,n\ge 3$, $\chi_{la}(C_m\vee C_n) = 6$.
\end{theorem}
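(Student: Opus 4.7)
The lower bound $\chi_{la}(C_m\vee C_n)\ge 6$ is immediate because both factors are odd cycles of chromatic number $3$, and hence $\chi(C_m\vee C_n)=\chi(C_m)+\chi(C_n)=6$.

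For the upper bound the plan is to imitate the construction of Theorem~\ref{thm-CmOnodd}, but to label the edges of a second odd cycle as well. Writing $V(C_m)=\{u_i\}$, $V(C_n)=\{v_j\}$, $e_i=u_iu_{i+1}$ and $e'_j=v_jv_{j+1}$, I would define $f:E(C_m\vee C_n)\to[1,m+n+mn]$ by placing $\{1,\dots,m\}$ on $E(C_m)$ via the zigzag rule $f(e_{2i-1})=i$, $f(e_{2i})=m+1-i$; placing $\{m+1,\dots,m+n\}$ on $E(C_n)$ via the shifted rule $f(e'_{2j-1})=m+j$, $f(e'_{2j})=m+n+1-j$; and placing the remaining labels $\{m+n+1,\dots,m+n+mn\}$ on the bipartite edges $u_iv_j$ through a magic $(m,n)$-rectangle, which exists because $m\equiv n\pmod 2$ and $(m,n)\ne(2,2)$.

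Reading off the induced vertex labels exactly as in the proof of Theorem~\ref{thm-CmOnodd}, the cycle edges contribute three distinct partial sums on each side: $(m+3)/2$ at $u_1$, $m+1$ at each even-indexed $u_i$, and $m+2$ at each odd-indexed $u_i$ with $i\ge 3$; and symmetrically $(4m+n+3)/2$ at $v_1$, $2m+n+1$ at each even-indexed $v_j$, and $2m+n+2$ at each odd-indexed $v_j$ with $j\ge 3$. The magic-rectangle row and column sums then add the constants $n(mn+1)/2+n(m+n)$ to every $f^+(u_i)$ and $m(mn+1)/2+m(m+n)$ to every $f^+(v_j)$, so each side carries exactly three induced colors.

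The main obstacle is to verify that no induced color on the $C_m$ side coincides with one on the $C_n$ side. By the symmetry under swapping $(m,n)$ one may assume $m\le n$. Setting $\Delta=(n-m)\bigl[(mn+1)/2+m+n\bigr]$, the required condition becomes $\Delta+A-B\ne 0$ for every pair $A\in\{(m+3)/2,m+1,m+2\}$ and $B\in\{(4m+n+3)/2,2m+n+1,2m+n+2\}$. For $m=n$ one has $\Delta=0$ and it is immediate that $\min B>\max A$ once $m\ge 3$. For $m<n$ common parity forces $n\ge m+2$, so $\Delta\ge mn+1+2m+2n$, while $B-A\le(3m+2n+1)/2$; the trivial inequality $2mn+m+2n+1>0$ then handles all nine cases uniformly. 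This yields $c(f)=6$, matching the lower bound and completing the proof.
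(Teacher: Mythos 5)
Your proposal is correct and takes essentially the same approach as the paper: zigzag labels on the two odd cycles plus a shifted magic $(m,n)$-rectangle on the join edges, with the lower bound coming from $\chi(C_m\vee C_n)=6$. The only (cosmetic) difference is that the paper reuses the labeling of Theorem~\ref{thm-CmOnodd} verbatim and puts the largest labels on the second cycle, ordering all six induced sums monotonically under $n\le m$, whereas you put the largest labels on the rectangle and check the nine cross-side differences directly; both verifications go through.
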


\begin{proof} Since $C_m \vee C_n$ and $C_n \vee C_m$ are isomorphic, we may assume that $n\le m$. Suppose $V(C_m \vee C_n) = V(C_m \vee O_n)$ and $E(C_m\vee C_n) = E(C_m \vee O_n) \cup \{e'_j = v_jv_{j+1}: 1\le j\le n\}$ as in Theorem~\ref{thm-CmOnodd}, where $v_{n+1}=v_1$. Let $f$ be the local antimagic labeling of $C_m \vee O_n$ defined in the proof of Theorem~\ref{thm-CmOnodd}. Define an edge labeling $g : E(C_m\vee C_n)\to [1,m+mn+n]$ such that $g(e) = f(e)$ for $e\in E(C_m\vee O_n)$ and $g(e'_j) = m + mn + f(e'_j)$. One may check that $g$ is a bijection. Moreover,
\begin{enumerate}[(i)]
  \item $g^+(u_1) = g_1=n(mn+1)/2 + mn + (m+3)/2$,
  \item $g^+(u_i) = g_2 = n(mn+1)/2 + mn + m+1$ for even $i$,
  \item $g^+(u_i) = g_3 = n(mn+1)/2 + mn + m+2$ for odd $i\ge 3$,
  \item $g^+(v_1) = g_4 = m(mn+1)/2 + m^2 + 2(m + mn)+ (n+3)/2$,
  \item $g^+(v_j) = g_5 = m(mn+1)/2 + m^2 + 2(m + mn)+ n + 1$ for even $j$, and
  \item $g^+(v_j) = g_6 = m(mn+1)/2 + m^2 + 2(m + mn)+ n + 2$ for odd $j\ge 3$.
\end{enumerate}

\nt Clearly $g_k < g_{k+1}$ for $1\le k\le 5$. Thus, $\chi_{la}(C_m\vee C_n) \le 6$. Since $\chi_{la}(C_m \vee C_n) \ge \chi(C_m \vee C_n) = \chi(C_m) + \chi(C_n) = 6$, we have  $\chi_{la}(C_m \vee C_n)=6$.
\end{proof}

\nt In~\cite{Haslegrave}, Haslegrave proved that every connected graph $G\ne K_2$ admits a local antimagic labeling which implies that $\chi_{la}(K_n)=n$ for all $n\ge 3$. We now consider the join graph $C_m\vee K_n$ with $V(C_m\vee K_n) = V(C_m\vee O_n)$ and $E(C_m\vee K_n) = E(C_m\vee O_n) \cup \{v_iv_j : 1\le i < j\le n\}$.  In~\cite{Arumugam}, the authors showed that $\chi_{la}(C_m \vee K_1) = 3$ for odd $m\ge 3$.

\begin{theorem}\label{thm-CmKnodd} For odd $m,n\ge 3$, $\chi_{la}(C_m \vee K_n) = n+3$.
\end{theorem}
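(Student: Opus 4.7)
The lower bound is immediate: since $m$ is odd, $\chi(C_m\vee K_n)=\chi(C_m)+\chi(K_n)=3+n$, so $\chi_{la}(C_m\vee K_n)\ge n+3$.

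For the upper bound, my plan is to extend the local antimagic labeling $f:E(C_m\vee O_n)\to[1,mn+m]$ constructed in the proof of Theorem~\ref{thm-CmOnodd}. Using the Haslegrave-based fact quoted above that $\chi_{la}(K_n)=n$, I would fix a local antimagic labeling $h:E(K_n)\to[1,\binom{n}{2}]$ on the vertex set $\{v_1,\ldots,v_n\}$ whose $n$ induced vertex sums $h^+(v_1),\ldots,h^+(v_n)$ are pairwise distinct. Then I define $g:E(C_m\vee K_n)\to[1,mn+m+\binom{n}{2}]$ by setting $g(e)=f(e)$ for $e\in E(C_m\vee O_n)$ and $g(v_iv_j)=mn+m+h(v_iv_j)$ for $1\le i<j\le n$. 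A direct check shows $g$ is a bijection. Clearly $g^+(u_i)=f^+(u_i)$, so the $u_i$'s attain the same three pairwise distinct values as in Theorem~\ref{thm-CmOnodd}. Moreover,
$$g^+(v_j)=f^+(v_j)+(n-1)(mn+m)+h^+(v_j)=\frac{m(mn+1)}{2}+m^2+(n-1)m(n+1)+h^+(v_j),$$
so as $j$ ranges over $[1,n]$, the value $g^+(v_j)$ attains $n$ pairwise distinct values (because $h^+(v_j)$ does).

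It remains to verify that the set of three $u_i$-colors is disjoint from the set of $n$ $v_j$-colors, and my plan is to prove the stronger inequality $g^+(v_j)>g^+(u_i)$ for all $i,j$. Using the bound $h^+(v_j)\ge 1+2+\cdots+(n-1)=n(n-1)/2$ (valid since every vertex of $K_n$ is incident to exactly $n-1$ distinct edges labeled from $[1,\binom{n}{2}]$), together with the explicit maximum $\max_i f^+(u_i)=n(mn+1)/2+mn+m+2$ from Theorem~\ref{thm-CmOnodd}, the required inequality reduces to
$$\frac{(m-n)(mn+1)}{2}+m^2+(n-1)m(n+1)+\frac{n(n-1)}{2}>mn+m+2,$$
which I would verify by splitting into the subcases $m\ge n$ (where $(m-n)(mn+1)/2\ge 0$ and the estimate is one line using $m(m+n^2-n-2)\ge m(m+4)\ge 21$) and $m<n$ (writing $n=m+2k$ with $k\ge 1$, so that $(m-n)(mn+1)/2=-k(mn+1)$, and observing that $(n-1)m(n+1)=m(n^2-1)$ grows roughly twice as fast in $n$). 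Together with the lower bound, this yields $\chi_{la}(C_m\vee K_n)=n+3$.

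The main obstacle is the final cross-comparison between the $u_i$-colors and $v_j$-colors when $n$ is much larger than $m$; however, since the positive contribution $m(n^2-1)$ dominates the negative $(n-m)(mn+1)/2$ by a factor of roughly $2$ in the leading order, the polynomial estimate is routine. Verifying that $g$ is a genuine bijection and that $h$ with the required properties exists for every odd $n\ge 3$ are both immediate from the construction and the quoted result $\chi_{la}(K_n)=n$.
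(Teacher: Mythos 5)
Your proposal is correct and follows essentially the same route as the paper: extend the labeling $f$ of $C_m\vee O_n$ from Theorem~\ref{thm-CmOnodd} by a local antimagic labeling of $K_n$ shifted by $mn+m$, note the $u_i$-sums are unchanged and the $v_j$-sums are pairwise distinct since the $h^+(v_j)$ are, and then check $g^+(v_j)>g^+(u_i)$ using $h^+(v_j)\ge n(n-1)/2$; the paper states this last comparison as "easy to show," whereas you supply the explicit (and correct) polynomial estimate.
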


\begin{proof} Let $f$ be the local antimagic labeling of $C_m \vee O_n$ defined in the proof of Theorem~\ref{thm-CmOnodd}. Let $h : E(K_n) \to [1, n(n-1)/2]$ be a local antimagic labeling of $K_n$. Note that $h^+(v_j)$ are distinct for $1\le j \le n$. Define an edge labeling $g : E(C_m\vee K_n) \to [1,mn+m+n(n-1)/2]$ such that $g(e) = f(e)$ for $e\in E(C_m\vee O_n)$ and $g(e) = h(e) + mn + m$ for $e\in E(K_n)$.
Note that $g^+(v_j) = f^+(v_j) + h^+(v_j)+(n-1)(mn+n)$. Since $f^+(v_j)$ are the same and $h^+(v_j)$ are distinct, $g^+(v_j)$ are distinct for $1\le j\le n$.

\ms\nt Moreover,
\begin{enumerate}[(i)]
  \item $g^+(u_1) = n(mn+1)/2 + mn + (m+3)/2$,
  \item $g^+(u_i) = n(mn+1)/2 + mn + m+1$ for even $i$,
  \item $g^+(u_i) = n(mn+1)/2 + mn + m+2$ for odd $i\ge 3$, and
  \item $g^+(v_j) = f^+(v_j) + h^+(v_j)+(n-1)(mn+n) \ge m(mn+1)/2 + m^2 + (n-1)(nm+m) + n(n-1)/2$.
\end{enumerate}

\nt It is easy to show that $g^+(v_j) > g^+(u_i)$ for all $1\le i\le m, 1\le j\le n$. Thus, $\chi_{la}(C_m \vee K_n) \le n+3$.  Since $\chi_{la}(C_m \vee K_n) \ge \chi(C_m \vee K_n) = n+3$, the theorem holds.
\end{proof}

\begin{theorem} For $m\ge 2, n\ge 1$, $\chi_{la}(C_{2m}\vee K_{2n}) = 2n+2$.
\end{theorem}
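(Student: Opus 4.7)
The plan is to follow the strategy of Theorem~\ref{thm-CmKnodd}. The lower bound is immediate from $\chi(C_{2m}\vee K_{2n}) = \chi(C_{2m}) + \chi(K_{2n}) = 2 + 2n$, so only the upper bound requires construction. The idea is to combine the local antimagic labeling $f$ of $C_{2m}\vee O_{2n}$ constructed in the proof of Theorem~\ref{thm-CmOneven} with a local antimagic labeling $h$ of $K_{2n}$ attaining $\chi_{la}(K_{2n})=2n$; such an $h$ exists for every $n\ge 2$ since Haslegrave's theorem guarantees a local antimagic labeling of $K_{2n}$, and because $K_{2n}$ is complete, any such labeling automatically uses $2n$ distinct induced colors.

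For $n\ge 2$, I would define $g:E(C_{2m}\vee K_{2n})\to[1,\,4mn+2m+n(2n-1)]$ by $g(e)=f(e)$ for $e\in E(C_{2m}\vee O_{2n})$ and $g(e)=h(e)+4mn+2m$ for $e\in E(K_{2n})$. Then $g$ is a bijection. Since no cycle vertex $u_i$ is incident to any $K_{2n}$-edge, $g^+(u_i)=f^+(u_i)$ takes exactly the two values $g_1$ and $g_2$ identified in Theorem~\ref{thm-CmOneven}. Moreover,
\[
g^+(v_j)=g_3+h^+(v_j)+(2n-1)(4mn+2m),
\]
so these $2n$ sums are pairwise distinct because the $h^+(v_j)$ are. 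A routine magnitude estimate, using that the dominating term $(2n-1)(4mn+2m)$ is large compared with $g_1$ and $g_2$ whenever $n\ge 2$, then shows that each $g^+(v_j)$ strictly exceeds both $g_1$ and $g_2$, giving exactly $2n+2$ colors and matching the lower bound.

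The main obstacle is the case $n=1$, since $K_2$ has a single edge and admits no labeling that separates its two vertices, so the combining step above would force $g^+(v_1)=g^+(v_2)$ and violate the coloring on $v_1v_2$. To treat $C_{2m}\vee K_2$, I would first modify $f$ on $C_{2m}\vee O_2$ by swapping the two labels $f(u_1v_1)=2m+1$ and $f(u_1v_2)=6m-1$; this preserves $f^+(u_i)$ for every $i$ (and hence the two-color structure on the cycle) but makes $f^+(v_1)\ne f^+(v_2)$. Then I would perform the shift-and-insert of Lemma~\ref{lem-G+e}, setting $g(v_1v_2)=1$ and $g(e)=f(e)+1$ on every other edge, so that $g^+(u_i)=f^+(u_i)+4$ contributes two values and $g^+(v_j)=f^+(v_j)+2m+1$ contributes two more. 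The final step, a short inequality check verifying that these four numbers are pairwise distinct for every $m\ge 2$, is the tightest calculation but poses no essential difficulty.
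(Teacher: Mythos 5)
Your proposal is correct and follows essentially the same route as the paper: the same lower bound via $\chi(C_{2m}\vee K_{2n})=2n+2$, the same shifted superposition of the labeling $f$ from Theorem~\ref{thm-CmOneven} with a local antimagic labeling of $K_{2n}$ for $n\ge 2$, and for $n=1$ the same key swap of the labels $f(u_1v_1)=2m+1$ and $f(u_1v_2)=6m-1$ to separate $v_1$ and $v_2$ while preserving the cycle sums. The only immaterial difference is that the paper assigns $v_1v_2$ the largest label $6m+1$ directly, whereas you insert the label $1$ and shift all other labels up by one; both give four pairwise distinct colors for every $m\ge 2$.
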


\begin{proof} Let $f$ be the local antimagic labeling of $C_{2m}\vee O_{2n}$ defined in the proof of Theorem~\ref{thm-CmOneven}.

\ms\nt Suppose $n=1$. Define an edge labeling $g : E(C_{2m}\vee K_2) \to [1,6m+1]$ such that $g(e) = f(e)$ for $e\in E(C_{2m}\vee O_2)$ and $g(v_1v_2) = 6m+1$. We now swap the labels of $g(u_1v_1)=2m+1$ and $g(u_1v_2)=6m-1$ to get $g^+(u_{2i-1}) = 10m+1$ and $g^+(u_{2i}) = 10m+3$ for $1\le i\le m$ and $g^+(v_1) = 8m^2 + 11m - 1$ and $g^+(v_2) = 8m^2 + 3m + 3$. Thus, $\chi_{la}(C_{2m} \vee K_{2}) \le 4$.

\ms\nt Now, consider $n\ge 2$. Let $h : E(K_{2n}) \to [1, n(2n-1)]$ be a local antimagic labeling of $K_{2n}$. Note that $h^+(v_j)$ are distinct for $1\le j \le 2n$. Define an edge labeling $g : E(C_{2m}\vee K_{2n}) \to [1,4mn+2m+n(2n-1)]$ such that $g(e) = f(e)$ for $e\in E(C_{2m}\vee O_{2n})$ and $g(e) = h(e) + 4mn + 2m$ for $e\in E(K_{2n})$.

\ms\nt By the same argument in the proof of Theorem~\ref{thm-CmKnodd}, we obtain that $g^+(v_j)$ are distinct for $1\le j\le 2n$.

\ms\nt From Theorem~\ref{thm-CmOneven} we have
$g^+(u_{2i}) = 4mn^2 - 4mn + 2n + 10m - 1 < g^+(u_{2i-1}) =4mn^2+ 12mn - 6m + 3$ for $1 \le i\le m$.
Moveover, $g^+(v_j) = f^+(v_j) + h^+(v_j)+(2n-1)(4mn+2m) \ge 4m^2n + 4m^2 + m + (2n-1)(4mn + 2m) + n(2n-1)$ for each $j$. Clearly $g^+(v_j)> g^+(u_{2i-1})$ for $1\le i\le m$ and $1\le j\le 2n$.

\ms\nt Thus, $\chi_{la}(C_{2m} \vee K_{2n}) \le 2n+2$.  Since $\chi_{la}(C_{2m}\vee K_{2n}) \ge \chi(C_{2m} \vee K_{2n}) = 2n+2$, the theorem holds.
\end{proof}


\begin{conjecture} For $n\ge 2$, $\chi_{la}(G \vee K_n) \ge \chi_{la}(G) + n$ if and only if $\chi_{la}(G) = \chi(G)$. \end{conjecture}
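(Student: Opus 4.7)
The ``if'' direction is immediate: when $\chi_{la}(G)=\chi(G)$,
\[
\chi_{la}(G \vee K_n) \ge \chi(G \vee K_n) = \chi(G) + n = \chi_{la}(G) + n,
\]
since the $n$ vertices of $K_n$ form a clique joined to every vertex of $G$ and so require $n$ induced colors disjoint from any color used on $V(G)$, while the general bound $\chi_{la}(H)\ge\chi(H)$ supplies the remaining inequality.

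For the ``only if'' direction I would argue the contrapositive: assuming $\chi(G) < \chi_{la}(G)$, exhibit a local antimagic labeling of $G \vee K_n$ in which $V(G)$ receives strictly fewer than $\chi_{la}(G)$ induced colors, for then the $n$ clique-colors on $V(K_n)$ (which must be distinct from each other and from every color on $V(G)$) push the total to at most $\chi_{la}(G) + n - 1$. The natural target is the trivial lower bound $\chi(G)+n$. Fix a proper $\chi(G)$-coloring $V_1,\ldots,V_k$ of $G$ and label $E(G\vee K_n)$ in three blocks, following the scheme of Theorem~\ref{thm-CmKnodd} and the subsequent $C_{2m}\vee K_{2n}$ result: the largest labels go to $E(K_n)$ via a shifted local antimagic labeling of $K_n$ (producing $n$ distinct large induced colors on $V(K_n)$); the middle labels fill a magic $(|V(G)|,n)$-rectangle on the cross-edges between $V(G)$ and $V(K_n)$ (contributing the same amount to every vertex of $V(G)$, and a common amount to every vertex of $V(K_n)$); and the smallest labels go on $E(G)$, assigned so that vertices within each $V_i$ receive a common induced sum over $E(G)$.

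The principal obstacle is the last step: requiring the $E(G)$-sums to be constant on each color class is a ``color-class magic'' condition on $G$, strictly stronger than being local antimagic, and there is no obvious reason it should be achievable for a general graph satisfying only $\chi(G)<\chi_{la}(G)$. In Section~\ref{sec:join} the cycle cases succeeded because of strong structural regularity, and I would begin by extending the three-block construction to other tractable families (trees with many leaves, bipartite graphs, complete multipartite graphs, small-degree regular graphs) before attempting a unified proof. A secondary difficulty is the Chai--Reyes parity requirement $|V(G)|\equiv n\pmod{2}$ for existence of the magic rectangle, which will force parity-based subcases to be handled either by ad hoc label swaps or by weakening to a column-constant (rather than row-and-column-constant) assignment of the cross-edges. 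Lemmas~\ref{lem-G-e} and~\ref{lem-G+e} may also provide an inductive route, by reducing $G$ to a graph for which the three-block construction is already established.
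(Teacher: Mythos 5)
This statement is posed in the paper as a \emph{conjecture}: the paper offers no proof of it, only the supporting evidence of the preceding theorems on $C_m\vee K_n$ and $C_{2m}\vee K_{2n}$. So there is nothing to compare your argument against, and your proposal itself does not close the question. Your ``if'' half is correct but carries no real content: since every vertex of $K_n$ is adjacent to all other vertices of $G\vee K_n$, one has $\chi(G\vee K_n)=\chi(G)+n$, and $\chi_{la}\ge\chi$ gives the bound immediately when $\chi_{la}(G)=\chi(G)$. All of the difficulty sits in the ``only if'' direction, which you reduce (correctly) to exhibiting, for every connected $G$ with $\chi(G)<\chi_{la}(G)$, a local antimagic labeling of $G\vee K_n$ with fewer than $\chi_{la}(G)+n$ induced colors; you then acknowledge that you cannot carry out the key step. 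That is a genuine gap, not a proof.

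Worse, the specific three-block scheme you outline cannot work even in principle. If the cross edges between $V(G)$ and $V(K_n)$ are filled by a row-constant (magic-rectangle) assignment, every vertex of $G$ receives the same cross-sum, and if the smallest block $[1,|E(G)|]$ is placed on $E(G)$, then the induced color of each $u\in V(G)$ is its $E(G)$-sum plus a common constant. Adjacent vertices of $G$ must then have distinct $E(G)$-sums, so the restriction of your labeling to $E(G)$ is itself a local antimagic labeling of $G$ and therefore induces at least $\chi_{la}(G)$ distinct sums on $V(G)$. Together with the $n$ clique colors this forces at least $\chi_{la}(G)+n$ colors in total --- exactly the inequality you are trying to violate. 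In particular the ``color-class magic'' condition you hope for (constant $E(G)$-sums on the classes of a proper $\chi(G)$-coloring) is not merely hard to arrange; it is impossible precisely when $\chi(G)<\chi_{la}(G)$, which is the only case you need. Any successful attack on the contrapositive must therefore make the cross-edge contributions non-constant over $V(G)$ (so that the cross labels, not the $E(G)$ labels, merge color classes), which is a different construction from the one used for Theorem~\ref{thm-CmKnodd} and is not supplied here. As it stands, the conjecture remains open, and your proposal establishes only its trivial half.
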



\ms\nt For $n\ge 2$, let $M_{2n}$ be the Mobi\"{u}s ladder obtained from $C_{2n} = u_1u_2\cdots u_nv_1v_2\cdots v_nu_1$ by adding the edges $u_iv_i, 1\le i\le n$.

\begin{theorem}\label{thm-M2nodd} For odd $n\ge 3$, $\chi_{la}(M_{2n})=3$.  \end{theorem}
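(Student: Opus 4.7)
The plan is to establish the two bounds $\chi_{la}(M_{2n})\ge 3$ and $\chi_{la}(M_{2n})\le 3$ separately.

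For the lower bound, I would first show that $M_{2n}$ is bipartite with both parts of size exactly $n$ whenever $n$ is odd. Coloring each vertex of the Hamilton cycle $u_1u_2\cdots u_nv_1v_2\cdots v_nu_1$ by the parity of its position makes the $2n$ cycle edges cross the bipartition, and a rung $u_iv_i$ joins positions $i$ and $n+i$, which have opposite parity exactly when $n$ is odd. Hence every rung also crosses the bipartition, and each color class contains exactly $n$ vertices. The contrapositive of Lemma~\ref{lem-2part} then forbids any local antimagic labeling of $M_{2n}$ with color number $2$, since such a labeling would require $|X|>|Y|$, contradicting $|X|=|Y|=n$ on the unique bipartition. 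Therefore $\chi_{la}(M_{2n})\ge 3$.

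For the upper bound, I would construct a bijective labeling $f:E(M_{2n})\to[1,3n]$ realizing exactly three induced vertex sums. A natural target is to assign every vertex in one part of the bipartition (say $A$) the common induced color $c=3(3n+1)/2$, which is an integer because $n$ is odd and is forced by the requirement that the sum of colors over $A$ equals the sum of all edge labels $3n(3n+1)/2$. The other part $B$ should then split into two classes of sizes $r$ and $n-r$ with values $a\ne b$, both distinct from $c$, satisfying $ra+(n-r)b=3n(3n+1)/2$. Since $M_{2n}$ is $3$-regular, I would distribute the $3n$ labels among the $n$ rungs and the $2n$ cycle edges in a patterned way: place structured blocks of labels on the rungs so that rung contributions at each vertex are balanced on $A$ and take two prescribed values on $B$, and arrange the remaining labels along the two arcs $u_1\cdots u_n$ and $v_1\cdots v_n$ of the Hamilton cycle in a matched alternating sequence whose consecutive pair-sums compensate for the rung imbalance. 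This is in the spirit of the cycle-edge labelings used in Theorems~\ref{thm-CmOnodd} and~\ref{thm-CmOneven}.

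The main obstacle lies in the construction, not the verification. Once a scheme is chosen, bijectivity and the three induced sums are routine to check. The delicate points are (i) making sure that exactly three distinct sums arise for every odd $n\ge 3$, which may require splitting into subcases by the residue of $n\pmod{4}$; (ii) handling the ``seam'' of the M\"obius ladder, where the twist edges $u_nv_1$ and $v_nu_1$ join the two arcs of the Hamilton cycle; and (iii) confirming that $a,b,c$ are pairwise distinct for all admissible $n$. Since the coloring is aligned with the bipartition, no adjacency collision can occur within a color class, so once the three sums are shown distinct the theorem follows by combining the bounds.
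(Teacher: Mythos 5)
Your lower bound argument is correct and is essentially the paper's: $M_{2n}$ is bipartite exactly when $n$ is odd (the rung $u_iv_i$ joins positions $i$ and $n+i$ of the Hamilton cycle, which have opposite parity iff $n$ is odd), both parts have size $n$, and since Lemma~\ref{lem-2part} forces the two color classes of any local antimagic $2$-coloring to be the (unique) bipartition with strictly unequal sizes, no such labeling exists, so $\chi_{la}(M_{2n})\ge 3$. Your structural guess for the upper bound is also consistent with what the paper actually does: in its labeling one whole part of the bipartition (the even-indexed $u_i$ together with the odd-indexed $v_j$) receives the common color $\tfrac{9n+3}{2}=\tfrac{3(3n+1)}{2}$, while the other part splits into two classes with colors $4n+3$ and $5n+3$.

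The genuine gap is that you never produce the labeling. The entire content of the upper bound is an explicit bijection $f:E(M_{2n})\to[1,3n]$ with exactly three induced sums, and the paper supplies one: a small ad hoc labeling for $n=3$, and for $n\ge 5$ closed-form formulas for the cycle edges ($f(u_iu_{i+1})=i$ for odd $i$, $f(u_iu_{i+1})=\tfrac{3n+3-i}{2}$ for even $i$, and analogous formulas on the $v$-arc) and for the rungs ($f(u_iv_i)=\tfrac{5n+2-i}{2}$ or $3n+1-\tfrac{i}{2}$ according to parity), with special values at the seam edges $u_1v_n$, $u_nv_1$ and at $v_1v_2$, after which the three sums are verified directly. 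Your proposal replaces this with a description of desiderata (``structured blocks on the rungs,'' ``matched alternating sequence whose consecutive pair-sums compensate for the rung imbalance'') and explicitly concedes that ``the main obstacle lies in the construction.'' Asserting that a scheme with these balancing properties exists is precisely what must be proved; it cannot be dismissed as routine verification once a scheme is chosen, because no scheme is given, and the seam of the M\"obius ladder (which you flag yourself) is exactly where naive alternating patterns break. As it stands, the proposal proves only the inequality $\chi_{la}(M_{2n})\ge 3$.
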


\begin{proof} Note that $M_{2n}$ has size $3$n, and is bipartite with parts of the same size. Thus, by Lemma~\ref{lem-2part}, $\chi_{la}(M_{2n})\ge 3$.

\ms\nt Suppose $n=3$, we get a local antimagic labeling by assigning the edges $u_1u_2$, $u_2u_3$, $u_3v_1$, $v_1v_2$, $v_2v_3$, $v_3u_1$, $u_1v_1$, $u_2v_2$, $u_3v_3$ by $1,5,4,8,6,7,3,9,2$, respectively. Clearly, the induced vertex coloring has three distinct colors, namely 11, 15, 23.

\ms\nt Suppose $n\ge 5$. Define a bijection $f : E(M_{2n})\to [1,3n]$ such that $f(u_1v_n)=\frac{3(n+1)}{2}$, $f(u_nv_1) = n$, $f(v_1v_2)=n+1$ and that
\begin{enumerate}[(i)]
  \item $f(u_iu_{i+1})=i$  for odd $i\in [ 1, n-2]$,
  \item $f(u_iu_{i+1})=\frac{3n+3-i}{2}$ for even $i \in[2,n-1]$,
  \item $f(v_iv_{i+1})=i$ for even $i \in [2, n-1]$,
  \item $f(v_iv_{i+1})=2n-\frac{i-3}{2}$ for odd $i\in [3, n-2]$,
  \item $f(u_iv_i) = \frac{5n+2-i}{2}$ for odd $i \in [1, n]$,
  \item $f(u_iv_i) = 3n+1 - \frac{i}{2}$ for even $i\in [2, n-1]$.
\end{enumerate}

\nt One can verify that $f^+(u_i) = f^+(v_j) = \frac{9n+3}{2}$ for even $i\in[2,n-1]$ and odd $j\in [1, n]$; $f^+(u_i)= f^+(v_2) = 4n+3$ for odd $i\in [1,n]$ and $f^+(v_j) = 5n+3$ for even $j\in [4,n-1]$. Therefore, $\chi_{la}(M_{2n}) \le 3$. Hence, the theorem holds.
\end{proof}

\begin{corollary} For odd $n\ge 3$, $\chi_{la}(M_{2n} - e) = 3$. \end{corollary}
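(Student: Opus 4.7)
The plan is to split the argument into a lower bound coming from bipartiteness and a matching upper bound obtained from Lemma~\ref{lem-reg} applied to the labeling already constructed in Theorem~\ref{thm-M2nodd}, then boosted to every edge using the rotational symmetry of the M\"obius ladder.

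For the lower bound I would verify that, for odd $n$, $M_{2n}$ is bipartite with parts $A=\{u_i : i \text{ odd}\}\cup\{v_j : j \text{ even}\}$ and $B=V(M_{2n})\setminus A$, each of cardinality $n$; checking that every cycle edge and every rung crosses this partition is routine (the only mild cases are $u_nv_1$ and $v_nu_1$, where the hypothesis that $n$ is odd is used). Since deleting one edge preserves both the bipartition and the equality $|A|=|B|=n$, a hypothetical $2$-coloring of $M_{2n}-e$ would violate the strict inequality $X>Y$ forced by Lemma~\ref{lem-2part}, giving $\chi_{la}(M_{2n}-e)\ge 3$.

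For the upper bound I would exploit that $M_{2n}$ is $3$-regular of size $3n$, so Lemma~\ref{lem-reg} applies directly. Inspecting the labeling $f$ from the proof of Theorem~\ref{thm-M2nodd}, rule (i) with $i=1$ gives $f(u_1u_2)=1$ and rule (vi) with $i=2$ gives $f(u_2v_2)=3n$; the explicit $n=3$ labeling there likewise assigns $1$ to $u_1u_2$ and $9=3n$ to $u_2v_2$. Hence Lemma~\ref{lem-reg} yields $\chi_{la}(M_{2n}-u_1u_2)\le 3$ and $\chi_{la}(M_{2n}-u_2v_2)\le 3$. To lift this to an arbitrary edge $e$, I would invoke the cyclic rotation $\rho$ of the meta-cycle $u_1u_2\cdots u_nv_1v_2\cdots v_n$. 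This $\rho$ is an automorphism of $M_{2n}$ because the rungs are precisely the pairs of vertices at distance $n$ along that cycle, so $\rho$ carries cycle edges to cycle edges and rungs to rungs; it acts transitively on each of the two edge orbits. Every edge of $M_{2n}$ is therefore the image of either $u_1u_2$ or $u_2v_2$ under some power of $\rho$, and transferring the corresponding local antimagic $3$-labeling along this automorphism produces a local antimagic $3$-labeling of $M_{2n}-e$, yielding $\chi_{la}(M_{2n}-e)\le 3$ and thus equality.

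The main obstacle is purely the symmetry bookkeeping: confirming that $M_{2n}$ has only two edge orbits under the rotation group and that the two representatives $u_1u_2$ and $u_2v_2$ jointly cover them (noting that $M_{2n}$ is not edge-transitive in general for odd $n\ge 5$, so edge-transitivity cannot be invoked as a black box). Once this is verified, the corollary follows immediately from Lemma~\ref{lem-2part}, Lemma~\ref{lem-reg}, and the labeling already constructed in Theorem~\ref{thm-M2nodd}.
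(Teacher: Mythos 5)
Your proposal is correct and follows essentially the same route as the paper: the lower bound via Lemma~\ref{lem-2part} applied to the balanced bipartition of $M_{2n}-e$, and the upper bound via Lemma~\ref{lem-reg} applied to the labeling of Theorem~\ref{thm-M2nodd}, using the edge with label $1$ (a cycle edge) for one deletion type and the edge with label $3n$ (a rung) for the other. The only difference is that you spell out explicitly the rotational-symmetry bookkeeping (two edge orbits, representatives $u_1u_2$ and $u_2v_2$) that the paper compresses into the remark that there are only two possible graphs $M_{2n}-e$.
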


\begin{proof} By Lemma~\ref{lem-2part}, we know that $\chi_{la}(M_{2n} - e)\ge 3$. Note that there are two possible graphs obtained by deleting  an edge from $M_{2n}$ (if $n > 3$), but using Lemma~\ref{lem-reg} with reference to the smallest label deals with one, 
and the largest label deals with the other. Therefore, we have $\chi_{la}(M_{2n} - e)\le 3$. Thus,  $\chi_{la}(M_{2n} - e) = 3$.
\end{proof}

\nt Note that $M_4 = K_4$ with $\chi_{la}(M_4)=4$.

\begin{conjecture} For even $n\ge 4$, $\chi_{la}(M_{2n}) = 4$. \end{conjecture}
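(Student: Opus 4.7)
The plan is to prove the two bounds $\chi_{la}(M_{2n})\ge 4$ and $\chi_{la}(M_{2n})\le 4$ separately. Throughout I will use that $M_{2n}$ is $3$-regular on $2n$ vertices with $3n$ edges, has chromatic number $3$ for $n\ge 4$, and is \emph{not} bipartite whenever $n$ is even, since the cycle $u_1u_2\cdots u_nv_1u_1$ (closed through the chord $u_1v_1$) has odd length $n+1$.

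For the upper bound I would construct an explicit bijection $f:E(M_{2n})\to[1,3n]$ whose induced vertex coloring attains exactly four values, splitting the construction according to the residue of $n\pmod 4$ in analogy with Theorem~\ref{thm-M2nodd}. The natural target is to force the four induced independent sets to be $\{u_i:i\text{ odd}\}$, $\{u_i:i\text{ even}\}$, $\{v_j:j\text{ odd}\}$, $\{v_j:j\text{ even}\}$; labels on the two rails $u_1u_2\cdots u_n$ and $v_1v_2\cdots v_n$ and on the chords $u_iv_i$ would then be chosen symmetrically so that each class becomes monochromatic, via a magic-rectangle-style balancing identity similar to the one appearing in Theorem~\ref{thm-CmOneven}. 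The involution $f\mapsto 3n+1-f$ of Lemma~\ref{lem-reg} would halve the numerical verification.

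For the lower bound I would argue by contradiction. Suppose $\chi_{la}(M_{2n})=3$, realised by a labeling $f$ with induced colors $x<y<z$ on independent sets $V_1,V_2,V_3$ of sizes $a,b,c$. Three-regularity forces $|E(V_i,V_j)|=3(n-|V_k|)$, so $|V_i|\le n$; and if some $|V_i|=n$, then $V\setminus V_i$ is independent, making $M_{2n}$ bipartite and contradicting the odd cycle of length $n+1$. Hence $|V_i|\le n-1$ for each $i$. Combined with $a+b+c=2n$ and the identity $ax+by+cz=3n(3n+1)$ obtained from $\sum_v f^+(v)=2\sum_e f(e)$, only a short list of size triples survives. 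Each remaining triple I would rule out by combining two tools: a parity inspection of $x,y,z$ coming from the fact that every $f^+(v)$ is a sum of three distinct labels in $[1,3n]$, and an odd-cycle-transversal argument based on the observation that every odd cycle of $M_{2n}$ must meet all three classes (an odd cycle cannot be properly $2$-coloured), so no class can be smaller than the odd cycle transversal number of $M_{2n}$.

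The main obstacle is the lower bound when the sizes are close to $(n-1,n-1,2)$: the aggregate identities alone do not yield a contradiction, so the odd-cycle-transversal bound must be made quantitative, showing that $M_{2n}$ for even $n\ge 4$ contains enough vertex-disjoint odd cycles that no independent set of size $2$ can meet all of them. Making this argument uniform in $n$, and dealing with the small cases $n=4$ and $n=6$ by separate inspection or ad hoc search, is where I expect the bulk of the work to lie.
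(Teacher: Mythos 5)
You should first note that the paper offers no proof of this statement at all: it is posed as an open conjecture (the authors only establish the odd case, Theorem~\ref{thm-M2nodd}), so there is no ``paper proof'' to match, and what you have written is a research plan rather than a proof. As a plan it has genuine gaps in both halves. For the upper bound you never exhibit the labeling: naming the four intended classes $\{u_i: i \text{ odd}\}$, $\{u_i: i \text{ even}\}$, $\{v_j: j \text{ odd}\}$, $\{v_j: j \text{ even}\}$ and appealing to a ``magic-rectangle-style balancing identity'' is not a construction, and nothing in Theorem~\ref{thm-CmOneven} or Lemma~\ref{lem-reg} produces it automatically; the whole content of the bound $\chi_{la}(M_{2n})\le 4$ is exactly this missing verification.

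The lower bound is where the plan actually breaks. Your structural observations are fine ($|E(V_i,V_j)|=3(n-|V_k|)$, hence $|V_k|\le n$, and $|V_k|=n$ would force bipartiteness, contradicting the odd $(n+1)$-cycle), but the next step, that $a+b+c=2n$ together with $ax+by+cz=3n(3n+1)$ leaves ``only a short list of size triples,'' is unsupported: the identity involves the three unknown colors as well, and with only $6\le x<y<z\le 9n-3$ it eliminates essentially nothing. Worse, the tool you rely on for the hard case $(n-1,n-1,2)$ is false. The odd girth of $M_{2n}$ for even $n$ is $n+1$, so any two odd cycles together need at least $2n+2>2n$ vertices; $M_{2n}$ does not even contain two vertex-disjoint odd cycles, let alone ``enough'' to defeat a transversal of size $2$. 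And the weaker statement you would need --- that no independent $2$-set meets every odd cycle --- already fails for $n=4$: in $M_8$ the independent pair $\{u_1,u_4\}$ is an odd cycle transversal (its deletion leaves a $4$-cycle with two pendant vertices, which is bipartite), so a color class of size $2$ cannot be excluded on these grounds. The vague ``parity inspection'' of $x,y,z$ is likewise not a usable criterion, since a sum of three distinct labels has no forced parity. So the proposal neither proves the conjecture nor isolates a workable route to the lower bound; if you want to pursue it, the case $c(f)=3$ needs a genuinely different counting or structural argument, and the upper bound needs an explicit labeling checked in full.
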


\begin{theorem}\label{thm-M6VO2n} For $n\ge 1$, $\chi_{la}(M_6 \vee O_{2n}) = 3$. \end{theorem}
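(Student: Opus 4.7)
The plan is to prove the equality in two steps. The lower bound is immediate: $M_6$ is isomorphic to $K_{3,3}$ with bipartition $W=\{u_1,u_3,v_2\}$ and $B=\{u_2,v_1,v_3\}$, so $\chi(M_6\vee O_{2n})=\chi(M_6)+1=3$ and hence $\chi_{la}(M_6\vee O_{2n})\ge 3$.

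For the upper bound I would exhibit an explicit local antimagic labeling $f:E(M_6\vee O_{2n})\to[1,12n+9]$ whose three induced color classes are $W$, $B$, and $V(O_{2n})$. Partition the label set as $[1,9]$ for $E(M_6)$ and $[10,12n+9]$ for the $12n$ join edges. Label $E(M_6)=E(K_{3,3})$ by the Lo Shu $3\times 3$ magic square, so that every vertex of $M_6$ collects the same $M_6$-sum $15$. Arrange the remaining labels as a $6\times 2n$ array whose rows are indexed by $V(M_6)$ (three $W$-rows and three $B$-rows) and whose columns are indexed by $V(O_{2n})$. The target is an array with (i) every column summing to $C:=36n+57$, (ii) every $W$-row summing to $R_W:=n(6n+19)$, and (iii) every $B$-row summing to $R_B:=n(18n+19)$. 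Given such an array, the induced colors are $15+R_W$ on $W$, $15+R_B$ on $B$, and $C$ on $V(O_{2n})$; pairwise distinctness for every $n\ge 1$ reduces to $R_W\ne R_B$ (clear for $n\ge 1$) together with the assertion that neither $6n^2-17n-42$ nor $18n^2-17n-42$ vanishes at a positive integer, which follows from a short discriminant check.

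The main obstacle is producing the $6\times 2n$ array, since a magic $(6,2n)$-rectangle (which exists for every $n\ge 1$) would force $R_W=R_B$. To break the symmetry, assign the small half $[10,6n+9]$ of the labels to the three $W$-rows and the large half $[6n+10,12n+9]$ to the three $B$-rows, which by itself guarantees $R_W<R_B$. Within the $W$-half, fill column $j$ ($1\le j\le 2n$) with the three consecutive labels centered at $6n+11-3j$, placing them in $W_1,W_2,W_3$ with displacements $(+1,0,-1)$ from the center when $j$ is odd and $(-1,0,+1)$ when $j$ is even. Fill the $B$-half analogously, with column $j$ receiving the three consecutive labels centered at $6n+3j+8$ under the same sign pattern. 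A direct computation then confirms that every $W$-row sums to $R_W$, every $B$-row to $R_B$, and that the $j$-dependent contributions from the two halves to column $j$ exactly cancel, yielding the constant column sum $C$. As a sanity check, for $n=1$ this produces the $6\times 2$ table with $W$-rows $(15,10),(14,11),(13,12)$ and $B$-rows $(18,19),(17,20),(16,21)$, giving induced colors $40$, $52$, and $93$.
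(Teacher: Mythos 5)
Your proposal is correct, and it takes a genuinely different route from the paper. You exploit $M_6\cong K_{3,3}$ twice: a Lo Shu magic square on the biadjacency positions equalizes every $M_6$-vertex's internal contribution at $15$, and then you build a bespoke $6\times 2n$ array on the labels $[10,12n+9]$ with constant column sum $36n+57$ and two row-sum values $R_W=n(6n+19)$, $R_B=n(18n+19)$; the consecutive-triple columns with alternating $(\pm1,0,\mp1)$ displacements do make every row sum constant within each half (the $\pm1$'s cancel over the $n$ odd and $n$ even columns) and make the $j$-dependence of the two halves' column sums cancel exactly, and your discriminant check ($6n^2-17n-42$ and $18n^2-17n-42$ have no integer roots, $R_W\ne R_B$) settles distinctness of the three colors, with $\chi\ge 3$ from the join giving the lower bound. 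The paper instead recycles machinery: it takes the join-edge labeling $f$ of $C_6\vee O_{2n}$ from Theorem~\ref{thm-CmOneven} (case $m=3$) shifted by $3$, and chooses a labeling of $E(M_6)$ by $[1,9]$ whose vertex sums are deliberately \emph{unequal} (e.g.\ $15,11,13,15,17,19$) precisely so as to compensate the unequal row contributions of $f$, again yielding the bipartition classes of $M_6$ plus $V(O_{2n})$ as the three color classes. Your construction is self-contained and arguably more transparent, since all verification reduces to arithmetic on a structured array; the paper's buys economy by reusing Theorem~\ref{thm-CmOneven} and produces a specific labeling (with an $M_6$-edge labeled $1$ and identifiable extreme labels) that it then feeds into Lemmas~\ref{lem-nonreg} and~\ref{lem-G-e} to handle the edge-deleted corollary, so if you wanted to recover that corollary from your labeling you would need to check those conditions for your $f$ separately.
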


\begin{proof} Let $V(M_6\vee O_{2n})=\{u_i : 1\le i\le 6\}\cup \{v_j : 1\le j\le 2n\}$ and $E(M_6\vee O_{2n}) = \{u_iu_{i+1} : 1\le i\le 5\} \cup \{u_1u_6, u_1u_4, u_2u_5, u_3u_6\} \cup \{u_iv_j : 1\le i\le 6, 1 \le j\le 2n\}$. Define a bijection $g : E(M_6 \vee O_{2n}) \to [1, 12n + 9]$ such that $g(u_1u_2) = 1$, $g(u_2u_3) = 3$, $g(u_3u_4) = 4$, $g(u_4u_5) = 2$, $g(u_5u_6) = 8$, $g(u_1u_6) = 5$, $g(u_1u_4) = 9$, $g(u_2u_5) = 7$, $g(u_3u_6) = 6$ and $g(u_iv_j) = f(u_iv_j) + 3$ for $1\le i\le 6, 1\le j\le 2n$, where $f$ is the function as defined in the proof of Theorem~\ref{thm-CmOneven} by taking $m=3$.

\ms\nt One can easily check that $g^+(u_1) = 15 + \sum_{j=1}^{2n} f(u_1v_j) + 3(2n) = 12n^2-4n+37$. Similarly, we get $g^+(u_3) = g^+(u_5) = g^+(u_1)$. Furthermore, for $i=2,4,6$, we also have $g^+(u_i) = 12n^2 + 42n - 7$, whereas $g^+(v_j) = 36n+57$ for $1\le j\le 2n$. Clearly, $g$ is a local antimagic labeling with $c(g) = 3$. Therefore, $\chi_{la}(M_6\vee O_{2n}) \le 3$. Since $M_6$ is bipartite, we have $\chi_{la}(M_6 \vee O_{2n}) \ge \chi(M_6\vee O_{2n}) = \chi(M_6) + \chi(O_{2n}) = 3$. Thus, $\chi_{la}(M_6 \vee O_{2n}) = 3$.
\end{proof}

\begin{corollary} For $n\ge 1$, $\chi_{la}((M_6\vee O_{2n})-e) = 3$.
\end{corollary}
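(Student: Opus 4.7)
The plan is to apply Lemma~\ref{lem-G-e} to the labeling $g$ constructed in the proof of Theorem~\ref{thm-M6VO2n}, or to the complementary labeling $g' = q + 1 - g$ (with $q = |E(M_6 \vee O_{2n})| = 12n + 9$) obtained via Lemma~\ref{lem-nonreg}, after composing with an automorphism of $G := M_6 \vee O_{2n}$ that moves $e$ to the edge carrying label $1$. First, since deleting a single edge still leaves triangles of the form $u_iu_jv_k$ (for some $u_iu_j \in E(M_6)$ and some $v_k \in V(O_{2n})$), one has $\chi(G - e) = 3$ and hence $\chi_{la}(G - e) \ge 3$.

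For the upper bound, recall that the vertices in $V(M_6)$ have degree $2n + 3$ while those in $V(O_{2n})$ have degree $6$. Under $g$, the three color classes are $V_1 = \{u_1, u_3, u_5\}$, $V_2 = \{u_2, u_4, u_6\}$, and $V_3 = \{v_1, \ldots, v_{2n}\}$, with induced colors $c_1 = 12n^2 - 4n + 37$, $c_2 = 12n^2 + 42n - 7$, and $c_3 = 36n + 57$; all vertices in $V_k$ share a common degree $d_k$, with $d_1 = d_2 = 2n + 3$ and $d_3 = 6$. A direct calculation shows that $c_a - d_a \ne c_b - d_b$ and $(q+1)(d_a - d_b) \ne c_a - c_b$ for each pair $a \ne b$ (this is the main, but routine, obstacle: the nonvanishing of certain quadratics in $n$ over positive integers). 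By Lemma~\ref{lem-nonreg}, $g' = q + 1 - g$ is a local antimagic labeling of $G$ with three color classes; by inspection $g(u_1u_2) = 1$ and $g'(u_2v_1) = 1$.

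Given an arbitrary edge $e$ of $G$, I would compose with a suitable $\sigma \in \mathrm{Aut}(G)$ so that the chosen labeling assigns label $1$ to $e$. If $e \in E(M_6)$, then since $K_{3,3}$ is edge-transitive there is $\sigma \in \mathrm{Aut}(M_6) \subseteq \mathrm{Aut}(G)$ with $\sigma(e) = u_1u_2$, and I take $h := g \circ \sigma$. If instead $e = u_iv_j$, then since $\mathrm{Aut}(K_{3,3}) \times \mathrm{Sym}\{v_1, \ldots, v_{2n}\} \subseteq \mathrm{Aut}(G)$ is transitive on such edges, I choose $\sigma$ with $\sigma(e) = u_2v_1$ and take $h := g' \circ \sigma$. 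In either case $h^+(x) = g^+(\sigma(x))$ (resp.\ $g'^+(\sigma(x))$), so the induced color classes of $h$ are the sets $\sigma^{-1}(V_k)$ with unchanged degrees $d_k$ and colors, and Condition~(ii) of Lemma~\ref{lem-G-e} continues to hold. Since $h(e) = 1$, Lemma~\ref{lem-G-e} gives $\chi_{la}(G - e) \le 3$, completing the proof.
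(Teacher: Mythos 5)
Your proof is correct and follows essentially the same route as the paper: lower bound from $\chi(G-e)=3$, and upper bound by applying Lemma~\ref{lem-G-e} to the labeling $g$ of Theorem~\ref{thm-M6VO2n} (which has $g(u_1u_2)=1$) when $e\in E(M_6)$, and to its complement $g'=12n+10-g$ obtained via Lemma~\ref{lem-nonreg} (which has $g'(u_2v_1)=1$) when $e$ is a join edge, using the edge-transitivity of $M_6\cong K_{3,3}$ together with the symmetry on $\{v_1,\dots,v_{2n}\}$ to reduce to these two representative edges. Your explicit automorphism argument just spells out the symmetry reduction the paper states in one line.
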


\begin{proof} Let $G=(M_6\vee O_{2n})-e$. We note that $\chi_{la}(G)\ge \chi(G)=3$. Since $M_6$ is edge-transitive, we only need to consider (i) $e\not\in E(M_6)$, and (ii) $e\in E(M_6)$.

\ms\nt In (i), it is straightforward to check the conditions of Lemma~\ref{lem-nonreg}. By Lemma~\ref{lem-nonreg}, we know $M_6\vee O_{2n}$ admits a local antimagic labeling $h=12n+10 - g$ with $c(h) = c(g) = 3$, where $g$ is as defined in the proof of Theorem~\ref{thm-M6VO2n}. Now, $$h^+(u_i) = \begin{cases}
12n^2+60n-7 & \mbox{ if } i = 1,3,5,\\
12n^2+14n+37 & \mbox{ if } i = 2,4,6,
\end{cases}$$
$h^+(v_j)=36n+3$ for $1\le j\le 2n$, and $h(uv)=1$ for an edge $uv\not\in E(M_6)$. It is straightforward to check the condition of Lemma~\ref{lem-G-e}. By Lemma~\ref{lem-G-e}, we have $\chi_{la}(G) = 3$.

\ms\nt In (ii), it is straightforward to check the condition of Lemma~\ref{lem-G-e}. By Lemma~\ref{lem-G-e}, we have $\chi_{la}(G) = 3$.
\end{proof}


\ms\nt For $m\ge 3$, $n\ge 1$, let $G(m,n)$ be the graph obtained from $C_m \vee O_n$ by deleting the edges $u_mv_j$, $1\le j\le n$. Note that $G(m,1)$ is the graph $W_m$ with a spoke deleted. By Theorems~\ref{thm-W4-e} and \ref{thm-Wm-e-even}, we have $\chi_{la}(G(2m,1))=3$  for $m\ge 2$. Moreover, by Theorem~\ref{thm-oddwheel-e}, we have determined the value of $\chi_{la}(G(2m+1,1))$ for $m\ge 1$.


\begin{theorem}\label{thm-G4n}  For $n\ge 1$, $\chi_{la}(G(4, n)) = 3$. \end{theorem}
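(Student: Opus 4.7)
The plan is to match the chromatic lower bound $\chi_{la}(G(4,n)) \ge \chi(G(4,n)) = 3$, which holds because $G(4,n)$ contains the triangle $u_1u_2v_1$ and admits the proper $3$-coloring $\{u_1,u_3\} \mid \{u_2,u_4\} \mid \{v_1,\dots,v_n\}$, by constructing for each $n\ge 1$ a local antimagic labeling of $G(4,n)$ with exactly three distinct induced colors. The base case $n=1$ is immediate from Theorem~\ref{thm-W4-e}, since $G(4,1) = W_4 - u_4v_1$ with $u_4v_1 \notin E(C_4)$, giving $\chi_{la}(G(4,1))=3$.

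For $n \ge 2$, the intended target is the independent partition $V_1 = \{u_1,u_3\}$, $V_2 = \{u_2,u_4\}$, $V_3 = \{v_1,\dots,v_n\}$, with induced colors $a$, $b$, $c$ respectively. This forces three coupled conditions on a bijection $f : E(G(4,n)) \to [1, 3n+4]$: (i) the three spoke labels $f(u_iv_j)$ $(i=1,2,3)$ at each $v_j$ sum to the common value $c$, i.e., the spoke labels decompose into $n$ equal-sum triples; (ii) $f(u_3u_4)+f(u_4u_1) = b = f(u_1u_2)+f(u_2u_3)+\sum_{j=1}^{n}f(u_2v_j)$, forcing the two cycle edges at $u_4$ to carry the same total induced value as the cycle-plus-spoke edges at $u_2$; (iii) the labels on $u_1$- and $u_3$-spokes together with the cycle edges balance so that $f^+(u_1)=f^+(u_3)$.

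A concrete attempt is to assign the two largest labels $3n+3$ and $3n+4$ to the cycle edges incident with $u_4$ (forcing $b=6n+7$), the two smallest labels $1$ and $2$ to the cycle edges of $u_2$ (so that $\sum_j f(u_2v_j)=6n+4$), and distribute the remaining $3n$ consecutive labels on the spoke edges. When $n$ is odd, a magic $(3,n)$-rectangle on $\{3,4,\dots,3n+2\}$ naturally realizes condition (i) with equal column sums, and the rows can be permuted within each column to enforce condition (iii). When $n$ is even, no such magic rectangle exists, so I would either (a) adjust the cycle labels to make the spoke-sum divisible by $n$ and construct the required equal-sum triples by hand, or (b) switch to the alternative independent partition $\{u_1,u_3\} \mid \{u_2\} \mid \{u_4,v_1,\dots,v_n\}$, aligning $f^+(u_4)=f^+(v_j)=c$ and reducing condition (ii) to a single cycle-sum identity.

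The hardest step will be simultaneously realizing conditions (i)--(iii) while keeping $a$, $b$, $c$ pairwise distinct. The three quantities are tightly coupled through the total edge-label sum $(3n+4)(3n+5)/2$, and a clean unified formula seems unlikely, so I anticipate splitting on $n \pmod 2$ (possibly further on $n \pmod 4$) and writing explicit labelings in each case. After fixing the formulas, a final numeric check is needed to verify $a \ne b$, $b \ne c$, $c \ne a$ for all admissible $n$, which will rule out certain otherwise-valid cycle-label choices.
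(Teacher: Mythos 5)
There is a genuine gap: your primary target partition $V_1=\{u_1,u_3\}$, $V_2=\{u_2,u_4\}$, $V_3=\{v_1,\dots,v_n\}$ cannot be realized by any labeling once $n\ge 9$, so the route you plan for odd $n$ (and option (a) for even $n$) collapses for all large $n$. Indeed $\deg(u_2)=n+2$, so $f^+(u_2)$ is a sum of $n+2$ distinct labels from $[1,3n+4]$ and hence $f^+(u_2)\ge \tfrac{(n+2)(n+3)}{2}$, while $\deg(u_4)=2$ gives $f^+(u_4)\le (3n+3)+(3n+4)=6n+7$; the requirement $f^+(u_2)=f^+(u_4)$ forces $(n+2)(n+3)/2\le 6n+7$, i.e.\ $n\le 8$. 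Your concrete numeric scheme fails even more directly: with $1,2$ on the cycle edges at $u_2$ and $3n+3,3n+4$ at $u_4$, you need $\sum_j f(u_2v_j)=6n+4$, but the $u_2$-spokes are $n$ distinct labels from $\{3,\dots,3n+2\}$, whose minimum sum $n(n+5)/2$ already exceeds $6n+4$ for $n\ge 9$. Moreover, for odd $n$ a magic $(3,n)$-rectangle gives equal row sums $n(3n+5)/2$, which does not meet your condition (ii) (already at $n=3$: $1+2+21=24\ne 25$), and permuting entries within columns, while preserving (i), does not rescue (ii) for large $n$ because of the degree obstruction above.

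The viable route is your fallback (b), the partition $\{u_1,u_3\}\mid\{u_2\}\mid\{u_4,v_1,\dots,v_n\}$ with $f^+(u_4)=f^+(v_j)$, which is exactly what the paper uses for \emph{all} $n\ge 2$, not just even $n$; since $u_4$ has degree $2$ and each $v_j$ has degree $3$, the common color can be kept small (linear in $n$) while $u_2$ and $\{u_1,u_3\}$ carry two large distinct colors. Realizing this still requires explicit constructions: the paper splits into the four residues $n\equiv 0,1,2,3\pmod 4$ and writes labeling matrices in each case, verifying $f^+(u_1)=f^+(u_3)\ne f^+(u_2)\ne f^+(u_4)=f^+(v_j)$. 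Your proposal contains no such construction (and, as written, would attempt the impossible partition for odd $n\ge 9$), so it does not yet prove the theorem; to repair it, make partition (b) the primary target for all $n\ge 2$ and supply the case-by-case labelings, keeping your correct treatment of the lower bound and of the base case $n=1$ via Theorem~\ref{thm-W4-e}.
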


\begin{proof} When $n=1$, we have proved the result in Theorem~\ref{thm-W4-e}. So we may assume that $n\ge 2$. Since $\chi(G(4, n))\ge 3$, it suffices to provide a local antimagic labeling $f$ for $G(4, n)$ with $c(f)=3$.

\ms\nt For $n=4k-1$, $k\ge 1$, the labeling matrix of $G(4,3)$ under $f$ is given below.
\[\begin{array}{c|*{4}{c}|*{3}{c}|c}
& u_1 & u_2 & u_3 & u_4 & v_1 & v_2 & v_3 & f^+(u_i)\\\hline
u_1 & * & 8 & * & 9 & 5 & 1 & 13 & 36\\
u_2 & 8 & * & 7 & * & 3 & 12 & 4 & 34\\
u_3 & * & 7 & * & 10 & 11 & 6 & 2 & 36\\
u_4 & 9 & * & 10 & * & * & * & * & 19\\\hline
f^+(v_j) & * & * & * & * & 19 & 19 & 19
\end{array}\]

\ms\nt The following tables are the first 4 rows of the labeling matrix of $G(4,4k-1)$ under $f$, where $k\ge 3$.

{\fontsize{9}{11}\selectfont
\[\begin{array}{c|*{4}{@{\;\;}c}|*{4}{@{\;\;}c}|*{4}{@{\;\;}c}|}
& u_1 & u_2 & u_3 & u_4 & v_1 & v_2 & \cdots & v_{k} & v_{k+1} & v_{k+2} & \cdots & v_{2k} \\\hline
u_1 & * & 10k+1 & * & 6k & 8k & 8k-1 & \cdots & 7k+1 & 9k & 9k - 1 & \cdots & 8k+1\\\hline
u_2 & 10k+1 & * &  4k & * & 1 &  3 & \cdots & 2k-1 & 2k+1 & 2k+3 & \cdots & 4k-1 \\\hline
u_3 & * & 4k & * & 12k+1 & 10k & 10k-1 & \cdots & 9k + 1 & 7k & 7k-1 & \cdots & 6k+1 \\\hline
u_4 & 6k & * & 12k+1 & * & * & * & \cdots  & * & * & * & \cdots & *  \\\hline
\end{array}\]
\[\begin{array}{c|*{4}{@{\;\;}c}|*{4}{@{\;\;}c}|*{3}{@{\;\;}c}|c}
 & v_{2k+1} & v_{2k+2} & \cdots & v_{3k-2} & v_{3k-1} & v_{3k} & \cdots & v_{4k-4} & v_{4k-3} & v_{4k-2} & v_{4k-1} & f^+(u_i)\\\hline
u_1 & 12k & 12k-1 & \cdots & 11k + 3 & 5k + 1 & 5k & \cdots & 4k+4 & 4k-6 & 4k+2 & 4k + 1 & 32k^2 + k - 10 \\\hline
u_2 & 2 & 4 & \cdots & 2k-4 & 2k-2 & 2k & \cdots & 4k-8 & 10k+4 & 10k+3 & 10k+2 & 8k^2 + 16k + 21 \\\hline
u_3 & 6k-1 & 6k-2 & \cdots & 5k+2 & 11k+2 & 11k + 1 & \cdots & 10k+5 &  4k+3 & 4k-4 & 4k-2 & 32k^2 + k - 10 \\\hline
u_4 & * & * & \cdots & * & * & * & \cdots &  * & * & * & * & 18k+1 \\\hline
\end{array}\]
}

\ms\nt It is easy to check that $f^+(u_4)=f^+(v_j)=18k+1$, i.e., the $v_j$-column sum, for $1\le j\le 4k-1$. This labeling can be applied to $k=2$ (the block-columns for $v_{2k+1}$ to $v_{4k-4}$ do not appear). The following shows the assignment for $G(4,7)$:
{\fontsize{9}{11}\selectfont
\[\begin{array}{c|*{4}{c}|*{2}{c}|*{2}{c}|*{3}{c}|c}
& u_1 & u_2 & u_3 & u_4 & v_1 & v_2 & v_3 & v_{4} & v_{5} & v_{6} & v_7 & f^+(u_i) \\\hline
u_1 & * & 21 & * & 12 & 16 & 15 & 18 & 17 & 2 & 10 & 9 & 120\\\hline
u_2 & 21 & * & 8 & * & 1 &  3 & 5 & 7 & 24 & 23 & 22 & 114 \\\hline
u_3 & * & 8 & * & 25 & 20 & 19 & 14 & 13 & 11 & 4 & 6 & 120 \\\hline
u_4 & 12 & * & 25 & * & * & * & *  & * & * & * & * & 37  \\\hline
f^+(v_j) &  * & * & * & * & 37 & 37 & 37 &  37 & 37 & 37 & 37 &
\end{array}\]
}

\ms\nt For $n=4k+1$, $k\ge 1$, the labeling matrix for $G(4,5)$ is given below.
\[\begin{array}{c|*{4}{c}|*{5}{c}|c}
& u_1 & u_2 & u_3 & u_4 & v_1 & v_2 & v_3 & v_4 & v_5 & f^+(u_i)\\\hline
u_1 & * & 4 & * & 16 & 10 & 9 & 8 & 11 & 13 & 71\\
u_2 & 4 & * & 6 & * & 1 & 3 & 17 & 12 & 15 & 58\\
u_3 & * & 6 & * & 14 & 19 & 18 & 5 & 7 & 2 & 71\\
u_4 & 16 & * & 14 & * & * & * & * & * & * & 30\\\hline
f^+(v_j) & * & * & * & * & 30 & 30 & 30 & 30 & 30
\end{array}\]

\ms\nt Similarly, we show the first 4 rows of the labeling matrix of $G(4,4k+1)$ under $f$, where $k\ge 3$.

{\fontsize{9}{11}\selectfont
\[\begin{array}{c|*{4}{@{\;\;}c}|*{4}{@{\;\;}c}|*{4}{@{\;\;}c}|}
& u_1 & u_2 & u_3 & u_4 & v_1 & v_2 & \cdots & v_{k-2} & v_{k-1} & v_{k} & \cdots & v_{2k-4}  \\\hline
u_1 & * & 10k+6 & * & 12k+7 & 8k+4 & 8k+3 & \cdots & 7k+7 & 9k+7 & 9k+6 & \cdots & 8k+10  \\\hline
u_2 & 10k+6 & * & 4k+2 & * & 1 &  3 & \cdots & 2k-5 & 2k-3 & 2k-1 & \cdots & 4k-9 \\\hline
u_3 & * & 4k+2 & * & 6k+3 & 10k+5 & 10k+4 & \cdots & 9k+8 & 7k+6 & 7k+5 & \cdots & 6k+9\\\hline
u_4 & 12k+7 & * & 6k+3 & * & * & * & \cdots  & *  & * & * & \cdots & * \\\hline
\end{array}\]
\[\begin{array}{c|*{4}{@{\;\;}c}|c|}
&  v_{2k-3} & v_{2k-2} & v_{2k-1} & v_{2k} & v_{2k+1} \\\hline
u_1 & 6k+8 & 6k+7 & 6k+6 & 6k+5 & 4k+1 \\\hline
u_2 &  4k-7 & 4k-5 & 4k-3 & 4k-1 & 6k+4 \\\hline
u_3 & 8k+9 & 8k+8 & 8k+7 & 8k+6 & 8k+5 \\\hline
u_4 &  *  & * & * & * & * \\\hline
\end{array}\]
\[\begin{array}{c|*{4}{@{\;\;}c}|*{4}{@{\;\;}c}|c}
 & v_{2k+2} & v_{2k+3} & \cdots & v_{3k+1} & v_{3k+2} & v_{3k+3} & \cdots &  v_{4k+1} & f^+(u_i)\\\hline
u_1 & 12k+6 & 12k+5 & \cdots & 11k+7 & 5k+2 & 5k+1 & \cdots & 4k+3 & 32k^2+41k+12 \\\hline
u_2 & 2 & 4 & \cdots & 2k & 2k+2 & 2k+4 & \cdots & 4k & 8k^2+22k+12 \\\hline
u_3 & 6k+2 & 6k+1 & \cdots & 5k+3 & 11k+6 & 11k+5 & \cdots & 10k+7 & 32k^2+41k+12 \\\hline
u_4 & * & * & \cdots & * &* & * & \cdots &  *  & 18k+10 \\\hline
\end{array}\]
}

\ms\nt It is easy to check that $f^+(u_4)=f^+(v_j)=18k+10$, for $1\le j\le 4k+1$. This labeling can be applied to $k=2$ (the block-columns for $v_1$ to $v_{2k-4}$ do not appear). The following shows the assignment for $G(4,9)$:
\[\begin{array}{c|*{4}{@{\;\;}c}|*{4}{@{\;\;}c}|c|*{2}{c}|*{2}{c}|c}
& u_1 & u_2 & u_3 & u_4 & v_1 & v_2 & v_3 & v_{4} & v_{5} & v_{6} & v_7 & v_8 & v_9 & f^+(u_i) \\\hline
u_1 & * & 26 & * & 31 & 20 & 19 & 18 & 17 & 9 & 30 & 29 & 12 & 11 & 222\\\hline
u_2 & 26 & * & 10 & * & 1 &  3 & 5 & 7 & 16 & 2 & 4 & 6 & 8 & 88 \\\hline
u_3 & * & 10 & * & 15 & 25 & 24 & 23 & 22 & 21 & 14 & 13 & 28 & 27 & 222 \\\hline
u_4 & 10 & * & 15 & * & * & * & *  & * & * & * & * & * & * & 46  \\\hline
f^+(v_j) &  * & * & * & * & 46 & 46 & 46 & 46 & 46 & 46 & 46 & 46 & 46
\end{array}\]

\ms\nt For $n=4k+2$, the following tables are the first 4 rows of the labeling matrix of $G(4,4k+2)$ under $f$, where $k\ge 1$.
{\fontsize{9}{11}\selectfont
\[\begin{array}{c|*{4}{@{\;\;}c}|*{4}{@{\;\;}c}|*{4}{@{\;\;}c}|}
& u_1 & u_2 & u_3 & u_4 & v_1 & v_2 & \cdots & v_k & v_{k+1} & v_{k+2} & \cdots & v_{2k} \\\hline
u_1 & * & 8k+6 & * & 12k+9 & 10k+7 & 10k+6 & \cdots & 9k+8 & 7k+5 & 7k+4 & \cdots & 6k+6  \\\hline
u_2 & 8k+6 & * & 12k+10 & * & 1 &  3 & \cdots & 2k-1 & 2k+1 & 2k+3 & \cdots & 4k-1 \\\hline
u_3 & * & 12k+10 & * & 6k+4 & 8k+5 & 8k+4 & \cdots & 7k+6 & 9k+7 & 9k+6 & \cdots & 8k+8 \\\hline
u_4 & 12k+9 & * & 6k+4 & * & * & * & \cdots  & * & * & * & \cdots & *\\\hline
\end{array}\]
\[\begin{array}{c|c|*{4}{@{\;\;}c}|*{4}{@{\;\;}c}|c|c}
& v_{2k+1} & v_{2k+2} & v_{2k+3} & \cdots & v_{3k+1} & v_{3k+2} & v_{3k+3} & \cdots & v_{4k+1} & v_{4k+2} & f^+(u_i)\\\hline
u_1 & 6k+5 & 12k+8 & 12k+7 & \cdots & 11k+9 & 5k+3 & 5k+2 & \cdots & 4k+4 & 4k+3 & 32k^2+55k+23 \\\hline
u_2 & 4k+1 & 2 & 4 & \cdots & 2k & 2k+2 & 2k+4 & \cdots & 4k & 10k+8 & 8k^2 + 36k + 25 \\\hline
u_3 & 8k+7 & 6k+3 & 6k+2 & \cdots & 5k+4 & 11k+8 & 11k+7 & \cdots & 10k+9 &  4k+2 & 32k^2+55k+23 \\\hline
u_4  & * & * & * & \cdots & * & * & * & \cdots &  * & * & 18k+13 \\\hline
\end{array}\]}

\ms\nt It is easy to check that $f^+(u_4)=f^+(v_j)=18k+13$, for $1\le j\le 4k+2$. This labeling can be applied to $k=0$. The following shows the assignment  for $G(4,2)$:
\[\begin{array}{c|*{4}{@{\;\;}c}|cc|c}
& u_1 & u_2 & u_3 & u_4 & v_1 & v_2 & f^+(u_i) \\\hline
u_1 & * & 6 & * & 9 & 5 & 3 & 23 \\\hline
u_2 & 6 & * & 10 & * & 1 & 8 & 25 \\\hline
u_3 & * & 10 & * & 4 & 7 & 2 & 23 \\\hline
u_4 & 10 & * & 4 & * & * & * & 13 \\\hline
f^+(v_j) &  * & * & * & * & 13 & 13
\end{array}\]

\ms\nt For $n=4k$, the following tables are the first 4 rows of the labeling matrix of $G(4,4k)$ under $f$, where $k\ge 2$.
{\fontsize{9}{11}\selectfont
\[\begin{array}{c|*{4}{@{\;\;}c}|*{4}{@{\;\;}c}|*{4}{@{\;\;}c}|c|}
& u_1 & u_2 & u_3 & u_4 & v_1 & v_2 & \cdots & v_{k-1} & v_{k} & v_{k+1} & \cdots & v_{2k-2} & v_{2k-1} \\\hline
u_1 & * & 10k+3 & * & 12k+4 & 10k+2 & 10k+1 & \cdots & 9k+4 & 7k+3 & 7k+2 & \cdots & 6k+5 & 6k+4\\\hline
u_2 & 10k+3 & * & 6k+2 & * & 1 &  3 & \cdots & 2k-3 & 2k-1 & 2k+1 & \cdots & 4k-5 & 4k-3\\\hline
u_3 & * & 6k+2 & * & 6k+1 & 8k+2 & 8k+1 & \cdots & 7k+4 & 9k+3 & 9k+2 & \cdots & 8k+5 & 8k+4 \\\hline
u_4 & 12k+4 & * & 6k+1 & * & * & * & \cdots  & * & * & * & \cdots & * & *\\\hline
\end{array}\]
\[\begin{array}{c|c|*{4}{@{\;\,}c}|*{4}{@{\;\,}c}|c|c|c}
& v_{2k} & v_{2k+1} & v_{2k+2} & \cdots & v_{3k-1} & v_{3k} & v_{3k+1} & \cdots & v_{4k-2} & v_{4k-1} & v_{4k} & f^+(u_i)\\\hline
u_1 & 6k+3 & 12k+3 & 12k+2 & \cdots & 11k+5 & 5k+1 & 5k & \cdots & 4k+3 & 4k+2 & 4k & 32k^2+23k+3 \\\hline
u_2 & 4k-1 & 2 & 4 & \cdots & 2k-2 & 2k & 2k+2 & \cdots & 4k-4 & 4k-2 & 10k+4 & 8k^2 + 24k + 9 \\\hline
u_3 & 8k+3 & 6k & 6k-1 & \cdots & 5k+2 & 11k+4 & 11k+3 & \cdots & 10k+6 & 10k+5 & 4k+1 & 32k^2+23k+3 \\\hline
u_4 & * & * & * & \cdots & * & * & * & \cdots &  * & * & * & 18k+5 \\\hline
\end{array}\]}

\ms\nt It is easy to check that $f^+(u_4)=f^+(v_j)=18k+5$, for $1\le j\le 4k$. Again, this labeling can be applied to $k=1$. The following shows the assignment  for $G(4,4)$:
\[\begin{array}{c|*{4}{@{\;\;}c}|*{4}{@{\;\;}c}|c}
& u_1 & u_2 & u_3 & u_4 & v_1 & v_2 & v_3 & v_4 & f^+(u_i) \\\hline
u_1 & * & 13 & * & 16 & 10 & 9 & 6 & 4 & 58 \\\hline
u_2 & 13 & * & 8 & * & 1 & 3 & 2 & 14 & 41 \\\hline
u_3 & * & 8 & * & 7 & 12 & 11 & 15 & 5 & 58 \\\hline
u_4 & 16 & * & 7 & * & * & * & * & * & 23 \\\hline
f^+(v_j) &  * & * & * & * & 23 & 23 & 23 & 23  \\
\end{array}\]

\ms\nt Since $f^+(u_1)=f^+(u_3)\ne f^+(u_2)\ne f^+(u_4)$, we have $c(f)=3$. The proof is complete.\end{proof}

\nt Note that $P_3\vee O_{n+1}$ can be obtained from $G(4,n)$ by adding the edge $u_2u_4$. By Lemma~\ref{lem-G+e}, the following is obtained.

\begin{corollary} If $G\equiv P_3\vee O_{n+1}$, then $\chi_{la}(G)=3$. \end{corollary}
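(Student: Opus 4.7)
The plan is to realize $G \cong P_3 \vee O_{n+1}$ as the graph $G(4,n)$ with the single extra edge $u_2u_4$ added, and then invoke Lemma~\ref{lem-G+e} on the local antimagic labeling $f$ constructed in the proof of Theorem~\ref{thm-G4n}. I would first verify the structural identification: taking the $P_3$ to be $u_1u_2u_3$ and the independent set $O_{n+1}$ to be $\{u_4, v_1, \ldots, v_n\}$, every required adjacency of $P_3 \vee O_{n+1}$ is already present in $G(4,n)$ except the single edge $u_2u_4$, whose addition produces $G$. The lower bound $\chi_{la}(G)\ge\chi(G)=\chi(P_3)+\chi(O_{n+1})=3$ is then immediate.

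For the upper bound I would observe that $f$ induces the three-independent partition $V_1=\{u_1,u_3\}$, $V_2=\{u_2\}$, $V_3=\{u_4,v_1,\ldots,v_n\}$, and that the degrees in $G(4,n)$ are $d_1=d_2=n+2$ and $d_3=3$, with the single exception that $\deg(u_4)=2=d_3-1$. Choosing $u=u_2\in V_2$ and $v=u_4\in V_3$ places us in case~(b) of condition~(iii) of Lemma~\ref{lem-G+e}, since $|V_2|=1$, $|V_3|=n+1\ge 2$, and $u_2u_4\notin E(G(4,n))$.

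It then remains to verify the cross-class inequality $f^+(x)+d_a\ne f^+(y)+d_b$ for $x\in V_a$, $y\in V_b$, $a\ne b$. Substituting the explicit induced colors $c_1=f^+(u_1)$, $c_2=f^+(u_2)$, $c_3=f^+(v_j)$ from the four residue cases $n\equiv 0,1,2,3\pmod{4}$ in the proof of Theorem~\ref{thm-G4n}, one sees that $c_1$ and $c_2$ grow quadratically in $n$ while $c_3$ grows only linearly, so the three shifted quantities $c_1+(n+2)$, $c_2+(n+2)$, and $c_3+3$ are easily kept pairwise distinct; the small values of $n$ are dispatched by the corresponding explicit tables (and by Theorem~\ref{thm-W4-e} for $n=1$). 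The only mild obstacle is the bookkeeping across these four residue classes plus the small special cases, but no unexpected coincidence occurs. Lemma~\ref{lem-G+e} then yields $\chi_{la}(G)\le 3$, completing the proof.
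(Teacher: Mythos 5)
Your proposal is correct and follows essentially the same route as the paper: identify $P_3\vee O_{n+1}$ with $G(4,n)$ plus the edge $u_2u_4$, take the labeling $f$ from Theorem~\ref{thm-G4n} with partition $\{u_1,u_3\},\{u_2\},\{u_4,v_1,\ldots,v_n\}$, and apply Lemma~\ref{lem-G+e} via condition (iii)(b), the verification reducing to $c_1+(n+2)$, $c_2+(n+2)$, $c_3+3$ being pairwise distinct, which holds since $c_1,c_2>c_3$ in every residue case. The only quibble is your $n=1$ citation: Theorem~\ref{thm-W4-e} gives $\chi_{la}(G(4,1))=3$ for the base graph, not the conclusion for $P_3\vee O_2$; there one should either apply Lemma~\ref{lem-G+e} to the labeling of Figure~\ref{fig:G4-1} or simply note $P_3\vee O_2\cong W_4$ with $\chi_{la}(W_4)=3$ known from the cited literature.
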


\begin{problem} Determine $\chi_{la}(P_m\vee O_n)$ for $m\ge 4, n\ge 2$. \end{problem}

\begin{theorem}\label{thm-Gmevennodd} For (i) $m\ge 3,$ $n\ge 4$, (ii) $m\ge 21,$ $n=3$, and (iii) $m\ge 4,$ $n=2$,  $\chi_{la}(G(2m, 2n-1)) = 4$. \end{theorem}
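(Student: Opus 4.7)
The proof establishes $\chi_{la}(G(2m,2n-1))=4$ by matching bounds. Set $G=G(2m,2n-1)$ and $q=|E(G)|=2m+(2m-1)(2n-1)=4mn-2n+1$; the degrees are $\deg(u_{2m})=2$, $\deg(u_i)=2n+1$ for $1\le i\le 2m-1$, and $\deg(v_j)=2m-1$ for every $j$. Since $G$ contains the triangle $u_1u_2v_1$ and is $3$-colourable, $\chi(G)=3$, so the nontrivial direction is $\chi_{la}(G)\ge 4$.

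For the lower bound, I suppose for contradiction that some local antimagic $f$ has $c(f)=3$. All $v_j$ must share one colour class, for otherwise every $u_i$ with $i<2m$ would be forced into the remaining class, contradicting its independence since these $u_i$'s form a path. The only vertex of $V(G)\setminus\{v_1,\dots,v_{2n-1}\}$ non-adjacent to every $v_j$ is $u_{2m}$, so there are just two cases: (A) $u_{2m}$ lies with the $v_j$'s and the remaining $u_i$'s split by index parity into the other two classes, or (B) the $v_j$'s form their own class and $u_{2m}$ joins the even-indexed $u_i$'s. In either case $f^+(u_{2m})=f(e_{2m-1})+f(e_{2m})$ is a sum of just two labels in $[1,q]$, and must coincide with the induced colour of some $v_j$ (a sum of $2m-1$ labels, Case A) or of some $u_{2i}$ with $i<m$ (a sum of $2n+1$ labels, Case B). Combining this with the global identity $\sum_{v\in V(G)} f^+(v)=q(q+1)$ and with the minimum/maximum sums of distinct labels in $[1,q]$ yields an arithmetic contradiction in each of the parameter ranges (i)--(iii); the excluded small pairs are precisely those where the resulting inequalities still admit feasible solutions.

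For the upper bound, I construct, in each range, a bijection $f:E(G)\to[1,q]$ with four induced colours $f^+(u_{2i-1})=x$, $f^+(u_{2i})=y$ for $1\le i\le m-1$, $f^+(u_{2m})=z$, and $f^+(v_j)=w$. Following the template of Theorems~\ref{thm-CmOneven} and~\ref{thm-G4n}, the labels $\{1,\dots,2m\}$ go on the cycle edges, arranged so that $f(e_{2i-1})+f(e_{2i})$ is constant for $1\le i\le m-1$ while $f(e_{2m-1})+f(e_{2m})$ takes a distinguished value (yielding $z$). The remaining labels $\{2m+1,\dots,q\}$ are placed on the $(2m-1)(2n-1)$ join edges in a near-magic arrangement whose columns all sum to $w$ and whose rows split into two patterns (odd-index, even-index) giving $x$ and $y$ after the cycle contributions are added. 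Each of (i), (ii), (iii) is realised by a separate explicit labeling matrix analogous to those constructed in Theorem~\ref{thm-G4n}.

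The principal obstacle is the join-edge labeling: a magic $(2m-1,2n-1)$-rectangle does not exist when both dimensions are odd and greater than $1$ (the parity condition recalled at the end of Section~\ref{sec:edge} is violated), so the arrangement must be built by hand, analogous to but more intricate than the one in the proof of Theorem~\ref{thm-G4n}. Checking $c(f)=4$ reduces to verifying that $x,y,z,w$ are pairwise distinct, which amounts to several polynomial inequalities in $m$ and $n$; the thresholds $m\ge 3$ when $n\ge 4$, $m\ge 21$ when $n=3$, and $m\ge 4$ when $n=2$ stated in the theorem are precisely the boundaries beyond which all these inequalities hold.
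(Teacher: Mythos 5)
Your plan follows the same two-part strategy as the paper (a two-case counting contradiction for the lower bound, and an explicit four-colour labeling built around a rectangle of join-edge labels for the upper bound), but neither half is actually carried out, and each contains a concrete error. For the upper bound, your ``principal obstacle'' rests on a misreading of the magic-rectangle existence theorem: a magic $(a,b)$-rectangle exists iff $a\equiv b\pmod 2$ and $(a,b)\neq(2,2)$, and here both $2m-1$ and $2n-1$ are odd, i.e.\ of the \emph{same} parity, so a magic $(2m-1,2n-1)$-rectangle does exist. The paper uses exactly this: the entries $a_{i,j}$ of such a rectangle (the labels $1,\dots,(2m-1)(2n-1)$) go on the join edges, and the $2m$ largest labels go on the cycle, arranged so that $f(e_{2i-1})+f(e_{2i})$ is one constant at $u_{2i}$ for $i<m$, another constant at the odd $u_i$, and a third value at $u_{2m}$; the four induced colours are then immediate and their distinctness is a routine, parameter-free check. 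Since you never exhibit the ``by hand, more intricate'' arrangement you say is needed (and your plan also inverts which labels go where), your upper bound is not established, although it becomes easy once the rectangle's existence is recognised.

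For the lower bound, your cases (A)/(B) coincide with the paper's cases (I)/(II), but the whole content of the theorem --- and the source of the thresholds in (i)--(iii) --- is the arithmetic you only assert. Concretely, in case (A) one gets $(2m-1)(2mn-m-n+1)\le f^+(u_{2m})\le 8mn-4n+1$, which forces $n\le 11/5$ and, for $n=2$, $m=3$; in case (B), summing the labels of the $(m-1)(2n+1)$ edges incident to $u_2,\dots,u_{2m-2}$ gives $(2n+1)(2mn+m-2n)\le 2f^+(u_{2m})\le 16mn-8n+2$, which fails for all $n\ge 5$ (when $m\ge3$), for $n=4$ unless $m\le 2$, and for $n=3$ unless $m\le 20$. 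This, not the distinctness of the four colours in the construction, is where the hypotheses $m\ge3$, $m\ge21$, $m\ge4$ come from (compare the paper's example $\chi_{la}(G(6,3))=3$, i.e.\ $m=3$, $n=2$). Moreover, your blanket claim that the minimum/maximum label sums ``yield an arithmetic contradiction in each of the parameter ranges (i)--(iii)'' is false as stated: for $n=2$ the case-(B) inequality reads $25m-20\le 32m-14$, which holds for every $m$, so range (iii) is not dispatched by the bounds you cite and requires a further argument (the paper's printed treatment of this case likewise only addresses $n\ge3$). As it stands, the lower bound is a plan, not a proof, and it is precisely at the delicate subcases that it would fail.
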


\begin{proof} Note that $\chi_{la}(G(2m, 2n-1)) \ge \chi(G(2m, 2n-1)) = 3$. Suppose $f$ is a local antimagic labeling of $G(2m, 2n-1)$ with $c(f) = 3$. We may have (I) $a=f^+(u_{2i-1}), 1\le i\le m$; $b = f^+(v_j) = f^+(u_{2m}), 1\le j\le 2n-1$; $c = f^+(u_{2i}), 1 \le i < m$ ; or (II) $a = f^+(u_{2i-1}), 1\le i\le m$; $b = f^+(v_j), 1\le j\le 2n-1$; $c = f^+(u_{2i}), 1\le i \le m$. Here $a, b, c$ are distinct. Now, every $v_j$ is adjacent to $2m-1$ vertices of $C_{2m}$.

\ms\nt For (I), $\sum_{j=1}^{2n-1} f^+(v_j)\ge 1 + 2 + \cdots + (2n-1)(2m-1) = (2n-1)(2m-1)(2mn-m-n+1)$. So, \begin{equation}\label{eq-Geo1}
(2m-1)(2mn-m-n+1)\le b=f^+(u_{2m})\le 8mn-4n+1\end{equation} giving $n\le \frac{(2m-1)(m-1)+1}{(2m-1)(2m-5)}$.
By simple calculus, we have $n\le \frac{11}{5}$. When $n=2$, we get $m=3$. This is not a case.

\ms\nt For (II), there are exactly $(2n-1)(m-1)+2m-2=2mn+m-2n-1$ edges incident to the vertices $u_{2i}$ for $1\le i\le m-1$. Each label of these edges contributes to the sum $\sum_{i=1}^{m-1} f^+(u_{2i})$ exactly once. Thus, $(m-1)c\ge \frac{1}{2}(2mn+m-2n-1)(2mn+m-2n)$. Therefore, we will get \begin{equation}\label{eq-Geo2}(2n+1)(2mn +m- 2n)\le 2c=2f^+(u_{2m})\le 16mn-8n+2.\end{equation}
However, $(2n+1)(2mn +m- 2n)\ge 11(2mn+m-2n)\ge 16mn +18n+11m-22n=16mn-4n+11m$, contradicting \eqref{eq-Geo2}, if $n\ge 5$. When $n=4$, we get $m=2$, contradicting $m\ge 3$. When $n=3$, we get $2\le m\le 20$, contradicting $m\ge 21$. So, $\chi_{la}(G(2m, 2n-1)) \ge 4$ under each of the given condition.

\ms\nt Define $f: E(G(2m, 2n-1)) \to [1, 4mn-2n+1]$ such that $f(u_{2m})= (2m-1)(2n-1) + 1$, $f(u_{2i}) = (2m-1)(2n-1) + i + 1$ for $1\le i\le m-1$, $f(u_{2i-1}) = (2m-1)(2n-1) + 2m + 1 - i$ for $1\le i\le m$ and $f(u_iv_j) = a_{i,j}$, $1\le i\le 2m-1, 1\le j\le 2n-1$, where $a_{i,j}$ is the $(i,j)$-entry of a $(2m-1,2n-1)$-magic rectangle with constant row sum $(2n-1)(2mn-m-n+1)$ and constant column sum $(2m-1)(2mn-m-n+1)$. One may check that $f$ is a bijection with $g_1= f^+(v_j) = (2m-1)(2mn-m-n+1)$ for $1\le j\le 2n-1$, $g_2=f^+(u_{2i}) = (2n-1)(2mn-m-n+1) + 2(2m-1)(2n-1) + 2m + 2$ for $1\le i\le m-1$, $g_3=f^+(u_{2i-1}) = (2n-1)(2mn-m-n+1) + 2(2m-1)(2n-1) + 2m + 1$ for $1\le i\le m$ and $g_4=f^+(u_{2m}) = 2(2m-1)(2n-1) + m + 2$. It is routine to verify that $g_1, g_2, g_3, g_4$ are distinct. Thus, $\chi_{la}(G(2m, 2n-1)) \le 4$. The theorem holds.\end{proof}

\begin{example}\label{eg-m56}{\rm The following are labelings that give $\chi_{la}(G(5,2))=\chi_{la}(G(6,2))=\chi_{la}(G(6,3))=3$.\\[2mm]
\centerline{
\epsfig{file=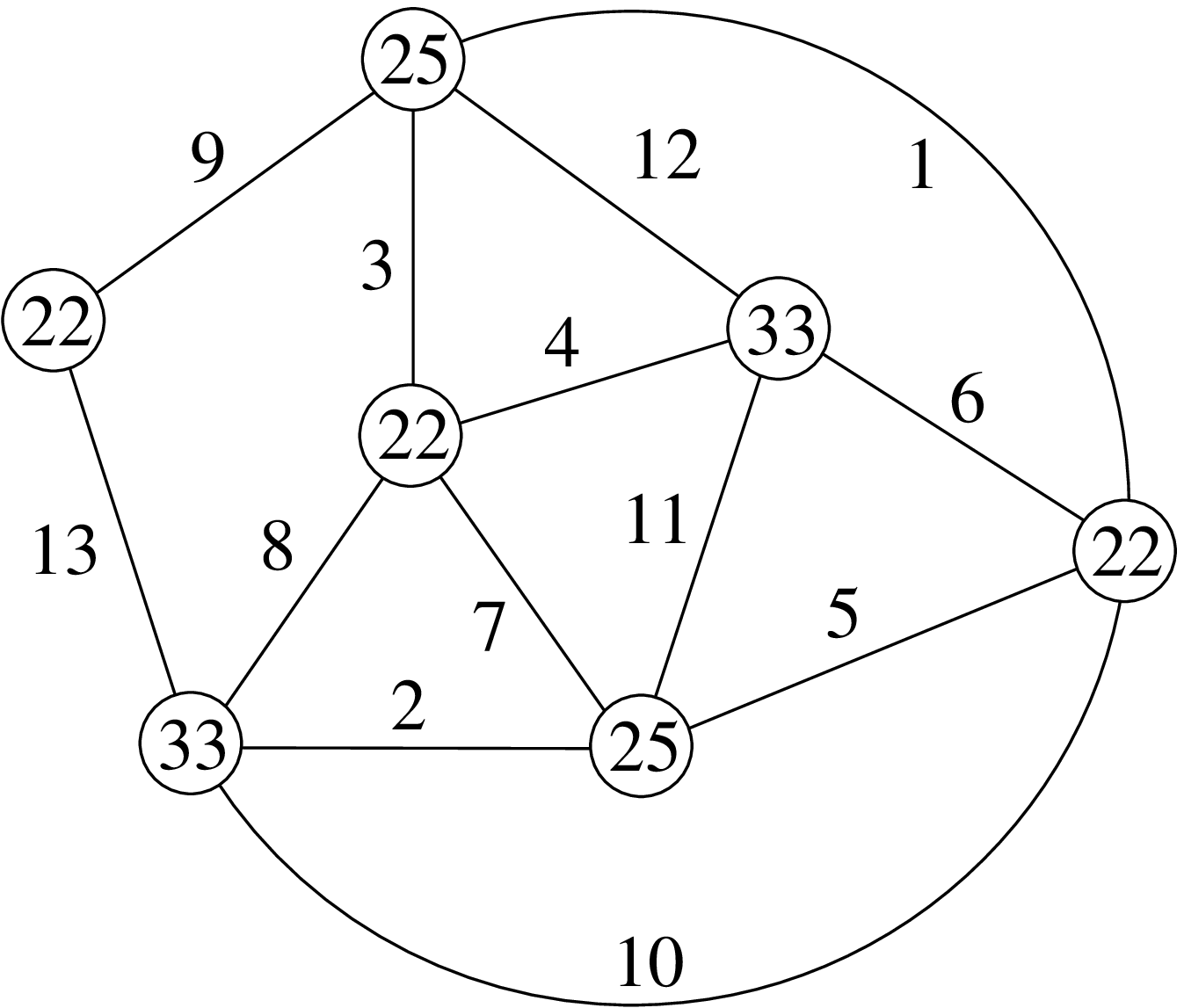, width=3.5cm}\quad \epsfig{file=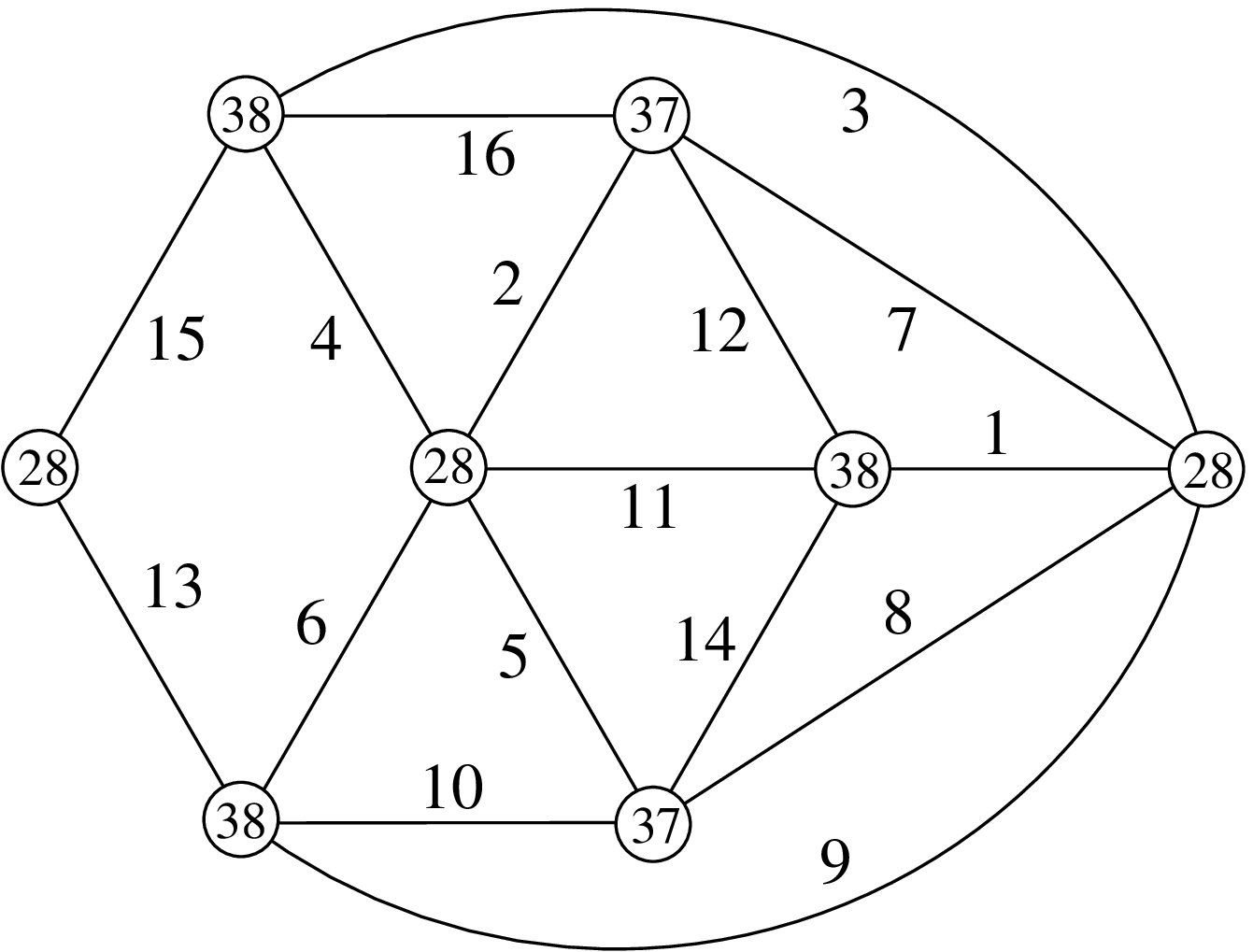, width=4cm} \quad
\epsfig{file=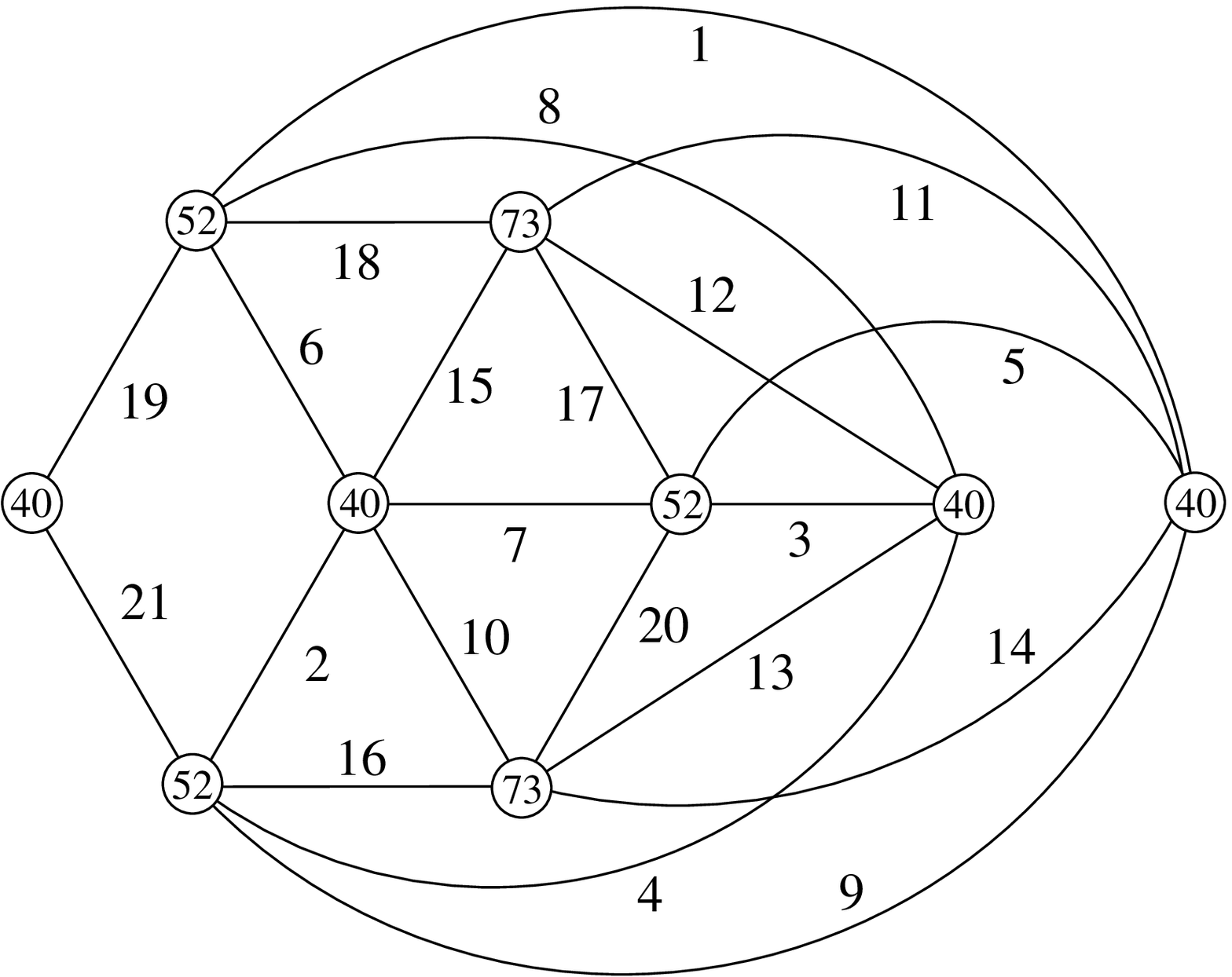, width=5cm}}

\ms\nt Note that $G(5,2)$ and $G(6,2)$ are two graphs we have not considered before.
}
\end{example}
\begin{problem} For $m\ge 5$, find $\chi_{la}(G(m,n))$ for $G(m,n)$ not a graph in Theorem~\ref{thm-Gmevennodd} and Example~\ref{eg-m56}. \end{problem}

\ms\nt Little is known on bipartite graph $G$ with $\chi_{la}(G)=2$ (see~\cite[Theorems 2.11 and 2.12]{Arumugam}). For $m\ge 2, i\ge 1$, let $B(n_1,n_2,\ldots,n_m)$ be the union of $K_{2,n_i}$ with bipartition $(X_i,Y_i)$, where $X_i=\{x_{i-1},x_i\}$, $Y_i=\{y_{i,1},y_{i,2},\ldots, y_{i,n_i}\}$ and $x_m=x_0$.

\ms\nt It is known from \cite[Theorem~2.8 and Theorem~2.12]{Arumugam} that $\chi_{la}(B(1^{[m]})) = \chi_{la}(C_{2m}) =3$ and  $\chi_{la}(B(n^{[2]}))=\chi_{la}(K_{2,2n})=2$ for $n\ge 2$. The following theorem gives another family of bipartite graphs with $\chi_{la}$ equal to 2.

\begin{theorem}\label{thm-Bnm} Suppose $m\ge 3$ and $n\ge 2$. We have $\chi_{la}(B(n^{[m]})) = 2$ if $n$ is even or both $m$ and $n$ are odd; $2\le \chi_{la}(B(n^{[m]}))\le 3$ for odd $n$ and even $m$.
 \end{theorem}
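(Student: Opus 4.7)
The plan is to first establish the trivial lower bound $\chi_{la}(B(n^{[m]}))\ge 2$ (the graph has edges), reduce the $2$-coloring question to a balanced-partition problem, handle the two "$=2$" regimes via standard coupling and magic-rectangle constructions, and build an explicit $3$-coloring in the remaining mixed-parity case. As setup, $B(n^{[m]})$ is bipartite with parts $X=\{x_0,\ldots,x_{m-1}\}$ of size $m$ and $Y=\bigcup_i Y_i$ of size $mn$; $|E|=q=2mn$, $\deg(x_i)=2n$, $\deg(y_{i,j})=2$. By Lemma~\ref{lem-2part} any $2$-coloring must assign all $x_i$ a common sum $a$ and all $y_{i,j}$ a common sum $b$ with $ma=(mn)b=mn(2mn+1)$, forcing $a=n(2mn+1)$ and $b=2mn+1$. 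Hence the two labels at every $y_{i,j}$ form a complementary pair $\{p,\,2mn+1-p\}$; writing $\alpha_{i,j}=f(x_{i-1}y_{i,j})$ and $S_i=\sum_j\alpha_{i,j}$, the $x$-sum equality telescopes around the $m$-cycle of $x$'s to $S_{i+1}=S_i$ for every $i$. Thus a $2$-coloring exists iff $\{1,\ldots,mn\}$ splits into $m$ groups of $n$ with common sum $S=n(mn+1)/2$.

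For $n$ even, I would couple the pair-indices $\{p,\,mn+1-p\}$ for $p=1,\ldots,mn/2$ (each couple's members summing to $mn+1$) and distribute $n/2$ couples per $Y_i$; with small-on-left orientation every group has sum $(n/2)(mn+1)=S$. For $m,n$ both odd, I would instead invoke a magic $(m,n)$-rectangle, which exists since $m\equiv n\pmod 2$ and $(m,n)\ne(2,2)$; its rows yield the required partition with common sum $n(mn+1)/2=S$. In both regimes a bijective labeling with the unique two color values $a,b$ results, giving $\chi_{la}=2$.

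The remaining case $m$ even, $n$ odd has $n(mn+1)/2$ a half-integer, so the above strategy breaks down. For $\chi_{la}\le 3$ I would keep every $y$-sum equal to $2mn+1$ but allow the $x$-sums to alternate between $n(2mn+1)-1$ and $n(2mn+1)+1$; the alternation closes cyclically because $m$ is even, and the three resulting vertex values are pairwise distinct since $(n-1)(2mn+1)\ge 13>1$. The required partition is $\{1,\ldots,mn\}$ into $m$ groups of $n$ whose sums alternate between $s^-=(n(mn+1)-1)/2$ and $s^+=s^-+1$. I would build it by starting from a magic $(m,n-1)$-rectangle on $\{1,\ldots,m(n-1)\}$ (which exists since $m$ and $n-1$ are both even with $(m,n-1)\ne(2,2)$), appending the $m$ leftover labels $m(n-1)+1,\ldots,mn$ in increasing order one per row (so preliminary row $i$ has sum $R+i$ for a constant $R$), and then performing corrective swaps between paired rows $i$ and $m+1-i$: swapping $a\in$ row $i$ with $b\in$ row $m+1-i$ preserves the pair total (already equal to $s^-+s^+$) and shifts row $i$'s sum by $b-a$, so a shift of $m/2-i$ or $m/2+1-i$ brings both rows of each pair to target values.

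The main obstacle is justifying that the corrective shifts can always be realized as actual element differences in the chosen magic $(m,n-1)$-rectangle, for every pair $(i,m+1-i)$ with $1\le i\le m/2$ and across all admissible $(m,n)$. The flexibility of two allowed shift magnitudes per pair, combined with the freedom to choose the magic rectangle and the orientation of the final alternating pattern, makes this consistently achievable; it can be settled by a standard transposition argument on magic rectangles plus a short case check for small $(m,n)$ such as $(4,3)$. Once the partition is in hand, the induced $y$-sums are uniformly $2mn+1$ and the $x$-sums take the two advertised values, establishing $\chi_{la}(B(n^{[m]}))\le 3$ and finishing the proof.
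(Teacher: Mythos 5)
Your reduction to sum-controlled partitions of $[1,mn]$ (complementary pair at each $y_{i,j}$, telescoping of the $x$-sums around the cycle) is sound, and your two ``$\chi_{la}=2$'' cases are correct and essentially the paper's own constructions: the odd--odd case is exactly the magic $(m,n)$-rectangle argument, and your even-$n$ pairing of $\{p,mn+1-p\}$ is a repackaging of the paper's explicit formula, which has the same complementary structure. The trivial lower bound is fine as well.

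The genuine gap is in the case $n$ odd, $m$ even. Your upper bound of $3$ rests on the claim that, after appending the $m$ leftover labels to a magic $(m,n-1)$-rectangle, every pair of rows $(i,\,m+1-i)$ admits a swap whose sum shift is exactly $m/2-i$ or $m/2+1-i$; you flag this yourself as ``the main obstacle'' and then assert it is ``consistently achievable'' via ``a standard transposition argument'' without proving it. Whether a prescribed difference is realized as $b-a$ with $a$ in row $i$ and $b$ in row $m+1-i$ depends on which magic rectangle you take, and nothing in your argument pins this down, so the key step of the third case is unproven as written. It is also unnecessary: since $B(n^{[m]})$ is bipartite, no two $x_i$ are adjacent, so a $3$-coloring does not need the $x$-sums to alternate between two values around the cycle --- any labeling with all $y$-sums equal and at most two distinct $x$-sums suffices. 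In your own framework, take the groups to be the consecutive blocks $G_i=\{(i-1)n+1,\ldots,in\}$, i.e.\ $f(x_{i-1}y_{i,j})=(i-1)n+j$ and $f(x_iy_{i,j})=2mn+1-(i-1)n-j$; then $f^+(y_{i,j})=2mn+1$ for all $i,j$, $f^+(x_i)=n(2mn+n+1)$ for $1\le i\le m-1$, and $f^+(x_0)=n(mn+n+1)$, giving three distinct colors immediately. This one-line labeling is precisely what the paper does for this case, so the alternation requirement led you into a hard combinatorial subproblem that the theorem never needed.
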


\begin{proof}
First note that the edges in each $K_{2,n}$ are $x_{i-1}y_{i,j}$ and $x_iy_{i,j}$ for $1\le i\le m, 1\le j\le n$.

\nt Suppose $n\ge 2$ is even. Define a bijection $f : E(G) \to [1, 2mn]$ such that
\begin{align*}
f(x_{i-1}y_{i,j}) & =\begin{cases}
(i-1)n+j & \mbox{ for odd }j\in [1, n-1];\\
(2m-i+1)n-j+1 & \mbox{ for even }j\in [2, n],
\end{cases}\\
f(x_{i}y_{i,j}) & =\begin{cases}
(2m-i+1)n-(j-1) & \mbox{ for odd }j\in [1, n-1];\\
(i-1)n+j & \mbox{ for even }j\in [2, n],\end{cases}
\end{align*}
where $1\le i\le m$.

\ms
\nt Recall that $x_m=x_0$. It is easy to verify that $f^+(y_{i,j}) = 2mn+1$ and $f^+(x_i)=2mn^2 + n$ for $1\le i\le m, 1\le j\le n$. Hence, $\chi_{la}(G) \le 2$. Since $\chi_{la}(G) \ge \chi(G) = 2$, we have $\chi_{la}(G) = 2$ for even $n\ge 2$.  \\

\nt Suppose $n$ is odd and $m$ is odd. Let $A$ be a magic $(m,n)$ rectangle.
For $1\le i\le m$, let $(f(x_{i}y_{i,1}), \dots, f(x_{i}, y_{i, n}))$ be the $i$-th row of $A$ and let $f(x_{i-1} y_{i, j})=2mn+1-f(x_{i} y_{i, j})$ for $1\le j\le n$. Clearly $f^+(y_{i,j})=2mn+1$ for $1\le i\le m$ and $1\le j\le n$. Since the row sum of $A$ is $n(mn+1)/2$, $f^+(x_i)=n(2mn+1)$ for $1\le i\le m$. Here, $\chi_{la}(G)\le 2$ and hence $\chi_{la}(G)= 2$.

\ms\nt Suppose $n$ is odd and $m$ is even.
Define a bijection $f : E(G) \to [1, 2mn]$
\begin{align*}
f(x_{i-1}y_{i,j}) & =
(i-1)n+j; \\
f(x_{i}y_{i,j}) &=
(2m-i+1)n-j+1,
\end{align*}
where $1\le i\le m$.

\nt It is easy to verify that $f^+(y_{i,j}) = 2mn + 1$, $f^+(x_0) = n(mn + n + 1)$ and $f^+(x_i) = n(2mn + n + 1)$ for $1\le i\le m-1$. Thus, $\chi_{la}(G)\le 3$.
 \end{proof}
\begin{example}{\rm The following is a local antimagic labeling according to the construction described in the proof above, which induces a $2$-coloring for $B(3^{[3]})$.
\[A=\begin{pmatrix}
2 & 7 & 6\\
9 & 5 & 1\\
4 & 3 & 8\end{pmatrix}.\]
\[\begin{array}{c|*{3}c|*{3}c|*{3}c|}
 & y_{1,1} & y_{1,2} & y_{1,3} & y_{2,1} & y_{2,2} & y_{2,3} & y_{3,1} & y_{3,2} & y_{3,3} \\\hline
x_1 & 2 & 7 & 6 & 10 & 14 & 18 &  * & * & * \\
x_2 & * & * & * & 9 & 5 & 1 & 15 & 16 & 11 \\
x_3 & 17 & 12 & 13 &  * & * & * &  4 & 3 & 8\\\hline
\end{array}\]
It is clear that each row sum is 57 and each column sum is 19.
}
\end{example}

\begin{example}{\rm The following is a local antimagic labeling inducing a $2$-coloring for $B(3^{[4]})$.
\[\begin{array}{c|*{3}c|*{3}c|*{3}c|*{3}c|}
 & y_{1,1} & y_{1,2} & y_{1,3} & y_{2,1} & y_{2,2} & y_{2,3} & y_{3,1} & y_{3,2} & y_{3,3} & y_{4,1} & y_{4,2} & y_{4,3}\\\hline
x_1 & 1 & 5 & 17 & 23 & 19 & 10 &  * & * & * & * & * & *\\
x_2 & * & * & * & 2 & 6 & 15 & 22 & 16 & 14 & * & * & * \\
x_3 & * & * & * & * & * & * &  3 & 9 & 11 & 21 & 18 & 13 \\
x_4 & 24 & 20 & 8 & * & * & * & * & * & * & 4 & 7 & 12\\\hline
\end{array}\]
It is easy to see that the row sum is always 75 and the column sum is always 25.}
\end{example}


\begin{problem} Determine $\chi_{la}(B(n_1,n_2,\ldots,n_m))$ for  $B(n_1,n_2,\ldots,n_m) \not= B(n^{[m]})$.  \end{problem}   

\section{Potential Application}


\nt Suppose $G$ is a connected graph of $p$ vertices and $q$ edges. Assume that each vertex represents an object of certain (possibly identical) weight and each object must be connected to at least one other object by a certain {\it unique} number of uniform bar(s) ranging from 1 to $q$ such that the total support receives by a vertex is the total number of connecting bar(s). Further assume that under safety consideration, no two adjacent objects can receive the same number of support(s) and that under cost saving consideration, we must minimize the number of distinct supports given to all the objects. Thus, this is answered by finding the local antimagic chromatic number of the underlying graph $G$.  




\begin{thebibliography}{100}

\bibitem{Arumugam} S. Arumugam, K. Premalatha, M. Bac\v{a} and A. Semani\v{c}ov\'{a}-Fe\v{n}ov\v{c}\'{i}kov\'{a}, Local antimagic vertex coloring of a graph, {\it Graphs and Combin.}, {\bf33} (2017)  275--285.

\bibitem{Bensmail} J. Bensmail, M. Senhaji and K.S. Lyngsie, On a combination of the 1-2-3 Conjecture and the Antimagic Labelling Conjecture, arXiv:1704.01172 (2017).

\bibitem{Chai} E.S. Chai, A. Das and C. Midha, Construction of magic rectangles of odd order, {\it Australas. J. Combin.}, {\bf 55} (2013), 131--144.

\bibitem{LNS} G.C. Lau, H.K. Ng and W.C. Shiu, Affirmative solutions on local antimagic chromatic number, (2018) submitted.

\bibitem{Haslegrave} J. Haslegrave, Proof of a local antimagic conjecture, (2017) arXiv:1705.09957v1.

\bibitem{Reyes} J.P. De Los Reyes, A matrix approach to construct magic rectangles of even order, {\it Australas. J. Combin.}, {\bf 40} (2008), 293--300.

\bibitem{Yu} X. Yu, J. Hu, D. Yang, J. Wu and G. Wang, Local antimagic labeling of graphs, {\it Appled Maths. Comp.}, {\bf 322} (2018), 30--39.

\end{thebibliography}
\end{document}